\newcommand{\mB}{\mathcal{B}}
\newcommand{\mG}{\mathcal{G}}
\newcommand{\mJ}{\mathcal{J}}
\newcommand{\mL}{\mathcal{L}}
\newcommand{\mM}{\mathcal{M}}
\newcommand{\mS}{\mathcal{S}}
\newcommand{\mT}{\mathcal{T}}
\newcommand{\mV}{\mathcal{V}}
\newcommand{\bfA}{\mathbf{A}}
\newcommand{\bfC}{\mathbf{C}}
\newcommand{\bfF}{\mathbf{F}}
\newcommand{\bfQ}{\mathbf{Q}}
\newcommand{\bfT}{\mathbf{T}}
\newcommand{\bfZ}{\mathbf{Z}}
\newcommand{\Oo}{\mathcal{O}}
\newcommand{\ov}{\overline}
\newcommand{\be}{\begin{equation}}
\newcommand{\ee}{\end{equation}}
\newcommand{\bes}{\begin{equation*}}
\newcommand{\ees}{\end{equation*}}
\newcommand{\bs}{\begin{split}}
\newcommand{\es}{\end{split}}
\newcommand{\bss}{\begin{split*}}
\newcommand{\ess}{\end{split*}}
\newcommand{\bmat}{\left[ \begin{matrix}}
\newcommand{\emat}{\end{matrix} \right]}
\newcommand{\bsmat}{\left[ \begin{smallmatrix}}
\newcommand{\esmat}{\end{smallmatrix} \right]}
\newcommand{\bml}{\begin{multline}}
\newcommand{\eml}{\end{multline}}
\newcommand{\bmls}{\begin{multline*}}
\newcommand{\emls}{\end{multline*}}
\DeclareMathOperator{\End}{End}
\DeclareMathOperator{\Frob}{Frob}
\DeclareMathOperator{\Gal}{Gal}
\DeclareMathOperator{\GL}{GL}
\DeclareMathOperator{\GSp}{GSp}
\DeclareMathOperator{\Hom}{Hom}
\DeclareMathOperator{\Sel}{Sel}
\DeclareMathOperator{\val}{val}
\newcommand{\tr}{\textup{tr}\hspace{2pt}}
\theoremstyle{plain}
\newtheorem{thm}{Theorem}
\newtheorem{prop}[thm]{Proposition}
\newtheorem{cor}[thm]{Corollary}
\newtheorem{lemma}[thm]{Lemma}
\theoremstyle{definition}
\newtheorem{definition}[thm]{Definition}
\newtheorem{rem}[thm]{Remark}
\numberwithin{thm}{section}
\numberwithin{equation}{section}
\author{Tobias Berger and Krzysztof Klosin}
\title[Irreducibility of limits of Galois representations]{Irreducibility of limits of Galois representations of Saito-Kurokawa type}
\begin{document}
 
\thanks{The first author's research was supported by the EPSRC Grant EP/R006563/1. The second author was supported by a Collaboration for Mathematicians Grant \#578231  from the Simons Foundation
 and by a PSC-CUNY award jointly funded by the Professional Staff Congress and the City
University of New York.}

\begin{abstract} We prove (under certain assumptions) the irreducibility of the limit $\sigma_2$  of a sequence of irreducible  essentially self-dual Galois representations $\sigma_k: G_{\bfQ} \to \GL_4(\ov{\bfQ}_p)$ (as $k$ approaches 2 in a $p$-adic sense) which mod $p$ reduce (after semi-simplifying) to $1 \oplus \rho \oplus \chi$ with $\rho$ irreducible, two-dimensional of determinant $\chi$, where $\chi$ is the mod $p$ cyclotomic character.  More precisely, we assume that $\sigma_k$ are crystalline (with a particular choice of weights) and Siegel-ordinary at $p$. Such representations arise in the study of $p$-adic families of Siegel modular forms and properties of  their limits as $k\to 2$ appear to be important in the context of the Paramodular Conjecture.  The result is deduced from the finiteness of two Selmer groups whose order is controlled by $p$-adic $L$-values of an elliptic modular form (giving rise to $\rho$) which we assume are non-zero.   \end{abstract}
\maketitle

\section{Introduction}
In \cite{BergerKlosinAJMaccepted} the authors studied the modularity of abelian surfaces with rational torsion. Let $A$ be an abelian surface over $\bfQ$, let $p$ be a prime and suppose that $A$ has  a rational point of order $p$, and a polarization of degree prime to $p$. Then the (semi-simplified)  action of $G_{\bfQ}:=\Gal(\ov{\bfQ}/\bfQ)$ on $A(\ov{\bfQ})[p]$ is of the form $1 \oplus \rho \oplus \chi$, for $\chi$ the mod $p$ cyclotomic character. Assuming that $\rho$ is irreducible, Serre's conjecture (Theorem of Khare-Wintenberger) implies that the mod $p$ representation looks like the reduction of that of a Saito-Kurakawa lift of an elliptic modular form $f$ of weight 2. If $\End(A)=\bfZ$ then the $p$-adic Tate module of $A$ gives rise to an irreducible $p$-adic Galois representation.  The Paramodular Conjecture (formulated by Brumer and Kramer \cite{BrumerKramer14}) predicts that this representation should be isomorphic to the Galois representation attached to a weight 2 Siegel modular form of paramodular level which is not in the space of Saito-Kurokawa lifts. Establishing the modularity of $A$ by a Siegel modular form therefore requires proving congruences between the Saito-Kurokawa lift $SK(f)$ and ``non-lifted type (G)"
Siegel modular forms. The latter are cuspforms staying cuspidal under the transfer to $\GL_4$, and are expected to be exactly the forms whose associated $p$-adic representation is irreducible.

Such congruences for Saito-Kurokawa lifts have been proven by Brown, Agarwal and Li \cite{Brown07}, \cite{AgarwalBrown14}, [Brown-Li] for holomorphic Siegel modular forms of congruence level $\Gamma_0^{2}(N)$ and paramodular level $\Gamma_{\rm para}(N)$ for weights $k$ larger than 6 (see [Brown-Li] Corollary 6.15). With this new result \cite{BergerKlosinAJMaccepted}  Theorem 10.2 can be generalized to allow ramification at a squarefree level $N$, and establishes a so-called $R=T$ result and the modularity of Fontaine-Laffaille representations that residually are of Saito-Kurokawa type (with an elliptic $f$ of weight $2k-2$ for $k \geq 6$).
A different type of congruences have also been constructed by Sorensen, see section \ref{paramod}.

The methods used to prove these congruences unfortunately do not extend to weight $k=2$, the case of interest for the modularity of abelian surfaces. We  propose to use $p$-adic families to prove the relevant congruences in weight 2 (albeit a priori only to a $p$-adic modular form - see below).
 For example, Skinner and Urban \cite{SkinnerUrban06} proved that for an ordinary elliptic form $f$ the  $\Gamma_{\rm para}(N)$-level holomorphic  Saito-Kurokawa lift $SK(f)$ can be $p$-adically interpolated by a semi-ordinary (also called Siegel-ordinary) family. 
It is plausible that their arguments could be adapted for $\Gamma_0^2(N)$-level holomorphic Saito-Kurokawa lifts. Such $p$-adic families have also  been studied by Kawamura \cite{Kawamura10preprint} and Makiyama \cite{Makiyama18}.

As part of a work in progress we construct (under some assumptions) another Siegel-ordinary $p$-adic family (of tame level either  $\Gamma_0^{2}(N)$ or  $\Gamma_{\rm para}(N)$) interpolating  the type of congruences constructed by Brown or Sorensen. At classical weights $k \gg 0$ its points  would correspond to irreducible $p$-adic Galois representations that are Siegel-ordinary  (see Definition \ref{S-ord def}) and whose semi-simplified residual representation is the mod $p$ representation associated to $SK(f)$.

One could then use this family to approach weight 2 via weights $k \gg 0$, but $k \to 2$ $p$-adically. As points of weight 2  for such a family are critical (in the sense that the $U_p=U_{p,1} U_{p,2}$-slope is at least one and therefore does not satisfy the small slope condition in Theorem 7.1.1 of \cite{AndreattaIovitaPilloni15}; see section \ref{Siegel1} for definitions of $U_{p,1}$ and $U_{p,2}$) 
 it is not clear whether this limit would correspond to a  classical Siegel modular form. 

In fact, modularity by $p$-adic Siegel modular forms was proved for certain abelian surfaces whose $p$-adic Galois representation is residually irreducible by Tilouine \cite{Tilouine06}. 
In a sense this paper provides a necessary ingredient to proving such  $p$-adic modularity for the residually reducible case as explained below. Let us also mention that some strong potential modularity results in the residually irreducible situation have recently been proven in \cite{BCGP}.

One potential problem is that while the $p$-adic Galois representations attached to the members of the family for $k \gg 0$ are irreducible this is not a priori clear of the limit. This property is on the one hand necessary for modularity purposes (as  $T_pA\otimes \bfQ_p$ is irreducible). On the other hand it allows one then to feed these ingredients into a machinery similar to the one developed in \cite{BergerKlosinAJMaccepted} (modified appropriately for representations that are Siegel-ordinary instead of Fontaine-Laffaille)  and under suitable conditions show that $T_pA$ and the limit Galois representation are in fact isomorphic, thus proving $p$-adic modularity of $A$. 

In this paper we introduce a new way of proving that under certain assumptions the limit of irreducible Galois representations is itself irreducible. This method is based on finiteness of Selmer groups and while we only apply it here in our specific situation (i.e., when the representations are residually of Saito-Kurokawa type, as  desired for proving the modularity of abelian surfaces with rational $p$-torsion) it is not difficult to see how it can be modified to work in other contexts,  cf. our upcoming paper about a residually reducible $R=T$ result for $\GL_2$ in weight 1.

In other words, while our overarching goal is to provide ingredients to prove modularity of abelian surfaces as explained above,  the theorems proven in  this paper could in principle be treated completely independently as a result on limits of Galois representations. In particular,  Siegel modular forms will be notably absent from our statements and their presence will manifest itself only through certain conditions imposed on the Galois representations. 
We thus consider a family (which is part of a ``refined'' rigid analytic family in the sense of Balla\"iche-Chenevier - see section \ref{Main Theorem}) of irreducible 4-dimensional $p$-adic Galois representations $\sigma_k$ indexed by a set of integers $k >2$,  $k\equiv 2$ (mod ($p-1$)) which approach 2 in the $p$-adic sense. Suppose that $\tr \sigma_k$ converge $p$-adically to some pseudo-representation $T$ when $k \to 2$. We require that for each $k$ the representation $\sigma_k$ reduces to some mod $p$ representation whose semi-simplification is isomorphic to $1 \oplus \chi \oplus \rho$ for an irreducible 2-dimensional representation $\rho$ and that it is crystalline and Siegel-ordinary. We are interested in conditions guaranteeing the irreducibility of $T$.

The basic idea is not difficult to explain. First we use the irreducibility of $\sigma_k$ to construct Galois stable lattices in their representation spaces so that infinitely many of the $\sigma_k$s reduce mod $p$ to a non-semi-simple residual representation (whose semi-simplification is $1 \oplus \chi \oplus \rho$) with the same Jordan Holder factor as a subrepresentation and the same Jordan-Holder factor as a quotient. It is not possible to ensure that  \emph{all} $\sigma_k$ reduce to the same combination as $\ov{\sigma}_k$ has three Jordan-Holder factors. Indeed, in general Ribet's Lemma only tells us that there are enough (non-split) extensions between different Jordan-Holder factors to guarantee connectivity of a certain graph - see  section \ref{SK type} - and absent any other assumptions (like for example lying in the Fontaine-Laffaille range which was used in Corollary 4.3 of \cite{BergerKlosinAJMaccepted}) there is no way to tell which extension will arise. However, as  there are only finitely many such extensions possible, we get an infinite subsequence $\mT$ of $\sigma_k$ with identical (non-split)  reduction.

Now, if $T$ was reducible, 
there are several ways in which it can split into the sum of irreducible pseudo-representations. Let us discuss here the case of three Jordan-Holder factors which can be regarded as the main result of this paper - see Theorem \ref{SK case}.  In that case  as $k\in \mT$ approaches $2$ ($p$-adically) the representations $\sigma_k$ become reducible modulo  $p^{n_k}$ with $n_k$ tending to $\infty$. As the reduction of $\sigma_k$ is non-split, we conclude that $\sigma_k$ give rise to elements in a certain Selmer group of arbitrary high order. Using symmetries built into the Galois representation one shows that this Selmer group can only be one of two possibilities. Then the Main Conjecture of Iwasawa Theory gives us that the orders of these Selmer groups are controlled by  specializations  to weight 2 (at two different points) of a certain $p$-adic $L$-function. 
Hence to guarantee that these Selmer groups are finite (i.e., that $T$ cannot be reducible) we impose a non-vanishing condition on these $L$-values. As we a priori do not know for which of the possible extensions we get the infinite subsequence $\mT$ we need to control  both of the $L$-values as above.  See section \ref{SK type} for details.

A priori if $T$ is reducible it could also split into 2 or 4 components and we deal with them in sections \ref{Main Theorem} and \ref{Yoshida type}. We are able to rule out all of them, albeit for the reduction type dealt with in section \ref{Yoshida type}, the so called Yoshida type, our theorems require quite strong assumptions.

We would like to thank Adel Betina, Pol van Hoften, Chris Skinner, and Ariel Weiss for helpful discussions related to the topics of this article and Andrew Sutherland for the example in section \ref{paramod}.

\section{Setup} \label{Setup} 
 Let $p$ be an odd prime. Let $E$ be a finite extension of $\bfQ_p$ with integer ring $\Oo$, uniformizer $\varpi$ and residue field $\bfF$.  We fix an embedding $\ov{\bfQ}_p \hookrightarrow \bfC$. Write $\epsilon$ for the $p$-adic cyclotomic character and $\chi$ for its mod $\varpi$ reduction.  Let $N$ be a square-free positive integer with $p\nmid N$. Let $\Sigma$  be the set of primes of $\bfQ$ consisting of $p$ and the primes dividing $N$. We denote by $G_{\Sigma}$ the Galois group of the maximal Galois extension of $\bfQ$ unramified outside of the set $\Sigma$. 

Consider a Galois representation $\rho: G_{\Sigma} \to \GL_2(\bfF)$ of which we assume that it is odd and  absolutely irreducible of determinant $\chi$. 
Furthermore we assume that $\rho$ is ordinary and $p$-distinguished, i.e., $$\rho|_{D_p} \cong \bmat \eta^{-1}\chi & * \\ &\eta\emat,$$ where $\eta$ is a non-trivial unramified character and that  $\rho|_{I_p} $ is non-split.
 We further assume that $\rho$ is ramified at all primes dividing $N$ and that $\rho|_{I_{\ell}}$ has a fixed line for all $\ell \mid N$ (or equivalently that $N$ is the prime-to-$p$-part of the conductor of $\rho$).

Let $\tau: G \to \GL_n(\Oo)$ be an $n$-dimensional representation of a group $G$ or $\tau: \Oo[G] \to \Oo$ be an $n$-dimensional pseudo-representation of $G$. For a definition of a pseudo-representation, its dimension  and  basic properties we refer the reader to section 1.2.1 of \cite{BellaicheChenevierbook}. However, let us only mention here that an $n$-dimensional pseudo-representation $\tau$ is called \emph{reducible} if $\tau = \tau_1 + \tau_2$ for some pseudo-representations $\tau_1, \tau_2$ (each necessarily of dimension smaller than $n$). A pseudo-representation that is not reducible is called \emph{irreducible}. In particular, if $\tau: G\to \GL_n(\Oo)$ is a representation, then  $T:=\tr \tau$ is an $n$-dimensional pseudo-representation and $T$ is reducible if and only if $\tau$ is. Furthermore if $\tau$ is an $n$-dimensional pseudo-representation and $\tau= \sum_{i=1}^{r} \tau_i$ with each $\tau_i$ an irreducible pseudo-representation, then this decomposition as a sum of irreducible pseudo-representations is unique (up to reordering of the summands). 

Now let $G=G_{\Sigma}$.
 By composing a representation or pseudo-representation $\tau$ with the reduction map $\Oo \to \bfF$ we obtain the \emph{reduction} of $\tau$ which we will denote by $\ov{\tau}$. If $\tau$ is an $n$-dimensional representation valued in $\GL_n(E)$, one can always find a $G_{\Sigma}$-stable $\Oo$-lattice $\Lambda$ such that when we choose a basis of $E^n$ to be a basis of $\Lambda$ we obtain a representation $\tau_{\Lambda}$ valued in $\GL_n(\Oo)$. The isomorphism class of $\tau_{\Lambda}$ and also of its reduction $\ov{\tau}_{\Lambda}$ depends in general on the choice of $\Lambda$. However, the semi-simplification  $\ov{\tau}^{\rm ss}_{\Lambda}$ (and hence also the pseudo-representation $\tr \ov{\tau}_{\Lambda}$) is independent of $\Lambda$ and so it makes sense to drop $\Lambda$ from the notation.

\begin{lemma} \label{lattice1} Let $\tau: G_{\Sigma} \to \GL_n(E)$ be a continuous representation and let $V$ be the representation space of $\tau$. Suppose that there exists a subspace $L \subset V$ of dimension $r \leq n$ with the following two properties: $L$ is stable under $G_{\Sigma}$ and  $G_{\Sigma}$ acts on $L$ via an irreducible representation $\psi: G_{\Sigma} \to \GL_r(E)$ with values in $\GL_r(\Oo)$. Let $\Lambda$ be a $G_{\Sigma}$-stable $\Oo$-lattice in $V$ ($\Lambda\otimes_{\Oo}E=V$). Then $\Lambda$ has a rank $r$ free $\Oo$-submodule which is stable under $G_{\Sigma}$ and on which $G_{\Sigma}$ acts via the representation $\psi$. \end{lemma}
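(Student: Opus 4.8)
The plan is to build the required submodule by rescaling a natural $\Oo$-lattice inside $L$ until it lands in $\Lambda$, rather than intersecting $\Lambda$ with $L$ directly. Concretely, since $\psi$ takes values in $\GL_r(\Oo)$, I would first fix an $E$-basis $e_1, \dots, e_r$ of $L$ with respect to which the $G_\Sigma$-action on $L$ is given literally by the matrices $\psi(g)$. Then $L_0 := \Oo e_1 \oplus \cdots \oplus \Oo e_r$ is a free rank-$r$ $\Oo$-submodule of $L$; it is stable under $G_\Sigma$ because $g \cdot e_i = \sum_j \psi(g)_{ji} e_j$ lies in $L_0$ for every $g$, and by construction $G_\Sigma$ acts on $L_0$ via $\psi$.

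The second step is to move $L_0$ inside $\Lambda$. Because $\Lambda$ is an $\Oo$-lattice in $V$ we have $V = \bigcup_{m \geq 0} \varpi^{-m}\Lambda$, so each of the finitely many vectors $e_1, \dots, e_r$ lies in $\varpi^{-m_i}\Lambda$ for some $m_i$; taking $N = \max_i m_i$ gives $\varpi^N e_i \in \Lambda$ for all $i$, hence $M := \varpi^N L_0 \subseteq \Lambda$. This $M$ is free of rank $r$ with $M \otimes_\Oo E = L$ (multiplication by $\varpi^N$ is an $\Oo$-isomorphism $L_0 \xrightarrow{\sim} M$, and $L_0 \otimes_\Oo E = L$), it is $G_\Sigma$-stable since $L_0$ is and $\varpi^N$ is a $G_\Sigma$-fixed scalar, and in the $\Oo$-basis $\varpi^N e_1, \dots, \varpi^N e_r$ of $M$ the action of each $g \in G_\Sigma$ is still given by the matrix $\psi(g)$, because rescaling all basis vectors by a common scalar does not change the matrix of a linear map. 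Thus $M \subseteq \Lambda$ has all the asserted properties.

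There is essentially no serious obstacle here; the argument is formal, and in fact the irreducibility of $\psi$ plays no role in this particular statement (it is imposed because that is the form in which the lemma will be applied). The one point deserving a word of care is the reading of ``acts via the representation $\psi$'': interpreting it as an equality of matrices in a suitable basis --- not merely conjugacy over $E$ --- is exactly why one should start from the honest lattice $L_0$, available thanks to the $\GL_r(\Oo)$-integrality of $\psi$, rather than from $\Lambda \cap L$, which would only be $\GL_r(E)$-conjugate to $\psi$. The underlying fact doing the work is simply that an $\Oo$-lattice in $V$ absorbs a sufficiently high $\varpi$-power multiple of any finite subset of $V$.
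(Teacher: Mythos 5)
Your proof is correct, and it takes a genuinely different route from the paper's. The paper works from inside $\Lambda$: it scales a single nonzero vector of $L$ so that it lands in $\Lambda \cap L$, takes the $\Oo$-span $\Lambda_0$ of its $G_{\Sigma}$-orbit (automatically contained in $\Lambda$ and $G_{\Sigma}$-stable, and free of finite rank since $\Oo$ is a PID), and then invokes the irreducibility of $\psi$ to see that $\Lambda_0 \otimes_{\Oo} E$, being a nonzero subrepresentation of $L$, equals $L$, whence $\Lambda_0$ has rank exactly $r$. You instead exploit the $\GL_r(\Oo)$-valuedness of $\psi$ to write down a full-rank $G_{\Sigma}$-stable lattice $L_0 \subset L$ at the outset and then push it into $\Lambda$ by a single power of $\varpi$. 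Your version buys two things: the irreducibility of $\psi$ is indeed never used (as you observe), and the resulting submodule realizes $\psi$ on the nose in an explicit basis, which is the strict reading of ``acts via $\psi$''; the paper's construction directly yields only a stable lattice whose generic fibre is $L$, i.e., an $\Oo$-form of $\psi$ that is $\GL_r(E)$-conjugate but a priori not $\GL_r(\Oo)$-conjugate to it --- harmless for the paper, since the lemma is only ever applied with $\psi$ a character of $D_p$, where the two readings coincide. The trade-off is that the paper's argument leans on irreducibility and produces a lattice generated by a single orbit inside $\Lambda \cap L$, whereas yours leans on the chosen integral basis for $\psi$; both arguments are complete.
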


\begin{proof} Fix a basis $\mB=\{e_1, e_2, \dots, e_n\}$ for the $\Oo$-module $\Lambda$. Then $\mB$ is also a basis for the vector space for $V$.  Let $v =\alpha_1 e_1 + \dots + \alpha_n e_n\in L$ be a non-zero vector. Then there exists $s \in \bfZ_{\geq 0}$ such that $\varpi^s \alpha_i\in \Oo$ for all $i=1,2, \dots, n$. Thus $0 \neq v_0:=\varpi^s v \in \Lambda \cap L$. Set $\Lambda_0:=\Oo$-span of $\{g \cdot v_0 \mid g \in G_{\Sigma}\}\subset L$. Clearly $\Lambda_0$ is an $\Oo$-module, but note that it is also stable under $G_{\Sigma}$. Indeed, let $v':=\beta_1 g_1 \cdot v_0 + \dots+ \beta_k g_k \cdot v_0 = \beta_1 \psi(g_1)v_0 + \dots + \beta_k \psi(g_k) v_0 \in \Lambda_0$ with $\beta_1, \dots, \beta_k \in \Oo$, $g_1, \ldots g_k \in G_{\Sigma}$. Let $g \in G_{\Sigma}$. Then $g \cdot v' = \psi(g) \beta_1 \psi(g_1)v_0\dots + \psi(g) \beta_k \psi(g_k) v_0 = \beta_1 \psi(gg_1)v_0\dots +\beta_k \psi(gg_k) v_0 \in \Lambda_0$, the first equality being true since $v'\in L$, i.e., $G_{\Sigma}$ acts on $\Lambda_0$ via $\psi$. Furthermore, since $\psi(g)$ has entries in $\Oo$ for every $g \in G_{\Sigma}$ and $v_0 \in \Lambda$, we conclude that $\Lambda_0$ is an $\Oo$-submodule of $\Lambda$. Hence, as $\Oo$ is a PID, we get that $\Lambda_0$ is a free finitely generated $\Oo$-module of rank $r' \leq n$.  
  Finally, $\Lambda_0 \otimes_{\Oo} E \subset L$ is a non-zero subspace of $L$ which is stable under the action of $G_{\Sigma}$, i.e., is a non-zero subrepresentation of $\psi$. Since $\psi$ is irreducible we must have $\Lambda_0 \otimes_{\Oo}E = L$ and so $r'=r$. 
\end{proof}

\begin{lemma} \label{subtoquo} Let  $\tau: G_{\Sigma} \to \GL_n(E)$ an irreducible representation. Suppose that with respect to some $G_{\Sigma}$-stable $\Oo$-lattice $\Lambda$ of the representation space $V$ of $\tau$ one has $\ov{\tau}_{\Lambda} \cong \bmat \tau_1 & * \\& \tau_2\emat $ for $\tau_i: G_{\Sigma}\to \GL_{r_i}(\bfF)$, $r_1+r_2=n$. Then there exists a $G_{\Sigma}$-stable $\Oo$-lattice $\Lambda'$ of the representation space $V$  such that with respect to $\Lambda'$ we have $\ov{\tau}_{\Lambda'} \cong \bmat\tau_1 \\ * &\tau_2\emat$. 
\end{lemma}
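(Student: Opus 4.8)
The plan is to produce the new lattice $\Lambda'$ by a duality argument combined with Lemma \ref{lattice1}. The key observation is that an extension of $\tau_2$ by $\tau_1$ sitting inside $\ov\tau_\Lambda$ (i.e.\ $\tau_1$ as sub, $\tau_2$ as quotient) is, after passing to the contragredient, an extension of $\tau_1^\vee$ by $\tau_2^\vee$ inside $\ov{\tau^\vee}_{\Lambda^\vee}$. So I would first pass to the dual representation $\tau^\vee$ on $V^\vee$, with dual lattice $\Lambda^\vee := \Hom_\Oo(\Lambda,\Oo)$; then $\ov{(\tau^\vee)}_{\Lambda^\vee} \cong \bmat \tau_2^\vee & * \\ & \tau_1^\vee \emat$, and in particular $V^\vee$ contains a $G_\Sigma$-stable line... more precisely an $r_1$-dimensional $G_\Sigma$-stable subspace on which $G_\Sigma$ acts through $\tau_1^\vee$? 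That is not automatic from the block-triangular shape. Let me restructure.

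The correct route: from $\ov\tau_\Lambda \cong \bmat \tau_1 & * \\ & \tau_2 \emat$ we get that $\tau_2$ is a subrepresentation of $\ov{(\tau^\vee)}_{\Lambda^\vee}$ up to dualizing, i.e.\ there is a $G_\Sigma$-stable subspace $W \subset V^\vee$ of dimension $r_2$ on which $G_\Sigma$ acts via $\tau_2^\vee$ — but again this uses that $\tau_2$ lifts. The honest input is: $V$ (hence $V^\vee$) is irreducible over $E$, and I want to apply Lemma \ref{lattice1} to $V^\vee$. For that I need an $E$-subspace of $V^\vee$ that is $G_\Sigma$-stable and on which $G_\Sigma$ acts via an \emph{irreducible} representation defined over $\Oo$; since $V$ is irreducible, $V^\vee$ has no proper nonzero $G_\Sigma$-stable subspace at all, so Lemma \ref{lattice1} as literally stated applies only with $r=n$, which is useless. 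Thus the argument must instead work at the residual/mod-$\varpi$ level. So here is the plan I would actually carry out.

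First, I would additionally assume (or it is harmless to assume, as the statement is about existence of \emph{some} $\Lambda'$) that $\tau_1,\tau_2$ are themselves irreducible — or reduce to that case by induction on $n$ using a composition series, since the final claim only concerns the two outermost Jordan--Hölder blocks of a suitable lattice. Next: set $\Lambda^\vee = \Hom_\Oo(\Lambda,\Oo)$ with the contragredient $G_\Sigma$-action; its reduction is $\ov{\tau^\vee}_{\Lambda^\vee} \cong \bmat \tau_2^\vee & * \\ & \tau_1^\vee \emat$, so $\ov{\Lambda^\vee/\varpi}$ has $\tau_2^\vee$ as a subrepresentation. Now I want a lattice $M$ in $V^\vee$ with $\tau_1^\vee$ as a \emph{sub} of $\ov{M/\varpi}$: take $M_0 \subset \ov{\Lambda^\vee/\varpi}$ the unique line... no, take the sub $\tau_2^\vee \subset \ov{\Lambda^\vee/\varpi}$, let $M$ be the preimage in $\Lambda^\vee$ of $\ov{\Lambda^\vee/\varpi}\,/\,\tau_2^\vee \cong \tau_1^\vee$ under $\Lambda^\vee \to \Lambda^\vee/\varpi \twoheadrightarrow \tau_1^\vee$; equivalently $M = \varpi\Lambda^\vee + \widetilde{\tau_1^\vee}$ where $\widetilde{\tau_1^\vee}$ is any $\Oo$-submodule of $\Lambda^\vee$ lifting $\tau_1^\vee$ — the standard "lattice pushout" construction. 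Then $\ov{M/\varpi}$ has a quotient isomorphic to $\tau_2^\vee$ and a sub isomorphic to $\tau_1^\vee$, i.e.\ $\ov{\tau^\vee}_M \cong \bmat \tau_1^\vee & * \\ & \tau_2^\vee \emat$. Finally, dualize back: $M^\vee = \Hom_\Oo(M,\Oo) \subset V$ is a $G_\Sigma$-stable $\Oo$-lattice, and $\ov{\tau}_{M^\vee} \cong \bmat \tau_2 \\ * & \tau_1 \emat$ — wait, that has the blocks in the wrong order relative to what is asked. To fix the ordering I would instead start the whole argument from $\ov\tau_\Lambda$ directly without dualizing: let $\Lambda'$ be the pullback lattice $\Lambda' := \varpi\Lambda + \widetilde{\tau_2}$, where $\widetilde{\tau_2} \subset \Lambda$ is a choice of $\Oo$-submodule reducing to the quotient $\tau_2$ of $\ov\tau_\Lambda$ (concretely, if $\mB = \{e_1,\dots,e_{r_1},e_{r_1+1},\dots,e_n\}$ is adapted so the first $r_1$ basis vectors span the sub, take $\widetilde{\tau_2} = \Oo e_{r_1+1} + \dots + \Oo e_n + \varpi\Lambda$ — hmm, need $\widetilde{\tau_2}$ to be $G_\Sigma$-stable modulo $\varpi\Lambda$, which it is since $\ov{\tau_2}$ is a quotient). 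Then $\Lambda' = \varpi\Lambda + \widetilde{\tau_2}$ is $G_\Sigma$-stable, contains $\varpi\Lambda$, and $\Lambda'/\varpi\Lambda'$ fits in $0 \to \tau_1 \to \Lambda'/\varpi\Lambda' \to \tau_2 \to 0$ \emph{with the extension class the negative/transpose of the original one}, giving $\ov\tau_{\Lambda'} \cong \bmat \tau_1 \\ * & \tau_2\emat$ as desired — one checks the off-diagonal block is nonzero exactly when the original $*$ was, but the Lemma does not even claim non-splitness, so that check is optional.

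The main obstacle — and the step I would be most careful about — is verifying cleanly that the "lattice pushout/pullback" $\Lambda' = \varpi\Lambda + \widetilde{\tau_2}$ is genuinely $G_\Sigma$-stable and that its reduction has the transposed block shape. The $G_\Sigma$-stability hinges on $\widetilde{\tau_2} \bmod \varpi\Lambda$ being a $G_\Sigma$-submodule of $\Lambda/\varpi\Lambda$, which is where the hypothesis that $\tau_2$ is a \emph{quotient} of $\ov\tau_\Lambda$ (not just a subquotient) is used; and identifying the reduction requires the elementary but slightly fiddly observation that $\Lambda'/\varpi\Lambda' \cong (\Lambda/\varpi\Lambda) \times_{\tau_2} (\text{lift})$, whose associated graded is the same two pieces but glued in the opposite order. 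I would also note that one can shortcut the index-bookkeeping by the contragredient argument sketched above (apply the already-known block shape to $V^\vee$, then dualize), modulo fixing the block ordering by remarking that reversing the basis converts $\bmat \tau_1 & * \\ & \tau_2 \emat$ into $\bmat \tau_2 \\ * & \tau_1 \emat$, so the two formulations are interchangeable. Either way, no deep input is needed — only Lemma \ref{lattice1}-style lattice manipulation and the fact that $\tau$ is irreducible over $E$, which guarantees $\Lambda' \otimes_\Oo E = V$.
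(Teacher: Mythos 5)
There is a genuine error at the heart of your final construction. You set $\Lambda' = \varpi\Lambda + \widetilde{\tau}_2$ where $\widetilde{\tau}_2$ lifts the \emph{quotient} $\tau_2$, and you justify $G_{\Sigma}$-stability by saying that $\widetilde{\tau}_2 \bmod \varpi\Lambda$ is a $G_{\Sigma}$-submodule ``since $\tau_2$ is a quotient.'' That is false: the $G_{\Sigma}$-stable submodule of $\Lambda/\varpi\Lambda$ is $\tau_1$ (the sub), not the chosen complement mapping onto $\tau_2$. Concretely, in an adapted basis $e_1,\dots,e_n$ write $\tau_{\Lambda}(g)=\bmat a_g & b_g \\ c_g & d_g\emat$ with $c_g \equiv 0 \pmod{\varpi}$; then for $j>r_1$ the vector $g\cdot e_j$ has $e_1,\dots,e_{r_1}$-components given by $b_g$, which lies in $\varpi\Lambda + \Oo e_{r_1+1}+\dots+\Oo e_n$ only if $b_g \equiv 0 \pmod{\varpi}$, i.e.\ only if the extension $*$ vanishes in this basis. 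So your $\Lambda'$ fails to be a lattice stable under $G_{\Sigma}$ exactly in the non-split case, which is the case of interest (and your claimed exact sequence $0\to\tau_1\to\Lambda'/\varpi\Lambda'\to\tau_2\to 0$ would in any event still exhibit $\tau_1$ as the sub, i.e.\ the upper-triangular shape, not the lower-triangular one). The same slip recurs in your dual-route sketch, where you again lift the quotient $\tau_1^{\vee}$ rather than the sub $\tau_2^{\vee}$; and note that ``reversing the basis'' only transposes the matrix picture — it cannot interchange which block is the subrepresentation, which is the actual content of the lemma.

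The repair is to lift the \emph{sub}: take $\Lambda' = \varpi\Lambda + (\Oo e_1 + \dots + \Oo e_{r_1})$, equivalently the preimage in $\Lambda$ of the $G_{\Sigma}$-stable submodule $\tau_1 \subset \Lambda/\varpi\Lambda$. This is manifestly $G_{\Sigma}$-stable, and $\Lambda'/\varpi\Lambda'$ has sub $\varpi\Lambda/\varpi\Lambda' \cong \Lambda/\Lambda' \cong \tau_2$ and quotient $\Lambda'/\varpi\Lambda \cong \tau_1$, i.e.\ $\ov{\tau}_{\Lambda'} \cong \bmat \tau_1 \\ * & \tau_2 \emat$ as required. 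Up to rescaling, this is precisely the paper's proof: there one conjugates $\tau_{\Lambda}$ by $\bmat 1 \\ & \varpi^{m}\emat$, i.e.\ rescales the second block of the basis, with $m\geq 1$ chosen as the minimal $\varpi$-valuation occurring in the blocks $c_g$; irreducibility of $\tau$ is used only to ensure some $c_g\neq 0$ (so $m$ is finite) and to make the new lower-left block nonzero mod $\varpi$ — for the bare statement of the lemma the one-step version ($m=1$) already suffices. Your preliminary reduction to irreducible $\tau_1,\tau_2$ and the dualization detour are unnecessary once this is done correctly.
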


\begin{proof} For $g \in G_{\Sigma}$ write $\tau_{\Lambda}(g)=\bmat a_g & b_g \\ c_g & d_g \emat$. Then $c_g$ is an $r_2 \times r_1$ matrix whose entries we denote by $c_{ij}(g)$. Let $S=\{ g \in G_{\Sigma} \mid c_g \neq 0\}$. Irreducibility of $\tau$ guarantees that $S$ is non-empty. For $g \in S$ set $m_g:= \min\{\val_{\varpi}(c_{ij}(g))\mid \textup{$i,j$ such that $c_{ij}(g) \neq 0$}\}$. Furthermore set $m = \min_{g \in S}m_g$ and note that $m\geq 1$ as $\ov{\tau}_{\Lambda}$ is upper-triangular. Then $$\bmat 1 \\ & \varpi^{-m}\emat \bmat a_g & b_g \\ c_g & d_g \emat \bmat 1 \\ & \varpi^{m}\emat = \bmat a_g & \varpi^m b_g \\ \varpi^{-m} c_g & d_g \emat.$$
\end{proof}

In this article we will be especially interested in 2-dimensional and 4-dimensional Galois representations that are \emph{ordinary} in a sense that we now define. 
\begin{definition}\label{S-ord def}\begin{enumerate}  \item A Galois representation $\tau: G_{\Sigma} \to \GL_2(E)$ will be called \emph{ordinary} if $\tau|_{D_p} \cong \bmat \psi^{-1} \epsilon^{k-1}&* \\ & \psi\emat$ for some positive integer $k$ and some unramified character $\psi$.    
\item A Galois representation $\tau: G_{\Sigma} \to \GL_4(E)$ will be called \emph{Siegel-ordinary} if $$\tau|_{D_p} \cong \bmat  \psi^{-1}\epsilon^{2k-3} & * & * & * \\ & * & * & * \\ &*&*&*\\ &&&\psi\emat,$$ for some positive integer $k$ and some unramified Galois character $\psi$.
\item A Galois representation $\tau: G_{\Sigma} \to \GL_4(E)$ will be called \emph{Borel-ordinary} if $$\tau|_{D_p} \cong \bmat  \psi^{-1}\epsilon^{2k-3} & *& * & * \\ &  \phi^{-1} \epsilon^{k-1} & * & * \\ & &\phi \epsilon^{k-2}&*\\ &&&\psi\emat,$$ for some positive integer $k$ and some unramified Galois characters $\psi$ and $\phi$.
\end{enumerate}
\end{definition}

For later it will be useful to introduce the following notation. If $\alpha \in E^{\times}$, then the unramified character from $D_p$ to $E^{\times}$ that takes the arithmetic Frobenius to $\alpha$ will be denoted by $\phi_{\alpha}$. 

\section{Irreducibility} \label{Main Theorem}

\subsection{Main assumptions} \label{Main assumptions} 
Assume we have a $p$-adic family of Galois representations in the sense of \cite{BellaicheChenevierbook}, i.e. we have a rigid analytic space $X$ over $\bfQ_p$ and a $4$-dimensional pseudo-representation $\bfT: G_{\Sigma} \to \Oo(X)$.  We denote by $\sigma_x:G_{\Sigma} \to \GL_4(E(x))$ (for some finite extension $E(x)$ of $\bfQ_p$) the semi-simple representation of $G_{\Sigma}$ whose trace is the evaluation $\bfT_x$ of $\bfT$ at $x \in X$ (for existence see \cite{Taylor91}, Theorem 1). We are interested in the case when the family satisfies nice $p$-adic Hodge properties for all points in a Zariski dense set $Z \subset X$ and want to deduce properties at a point $x_0 \in X \backslash Z$, in particular control the ramification at $p$ of the corresponding Galois representation. The reader should think of $X$ as (an affinoid subdomain of) an eigenvariety parametrizing Siegel modular forms. We therefore also assume the existence of a weight morphism $w: X \to \mathcal{W}$, where $\mathcal{W}$ is the rigid analytic space over $\bfQ_p$ such that $\mathcal{W}(\bfC_p)=\Hom_{\rm cts}((\bfZ_p^{\times})^2, \bfC_p^{\times})$.

More precisely, assume that we have data $(X, \bfT, \{\kappa_n\}, \{F_n\}, Z)$, a refined family in the sense of \cite{BellaicheChenevierbook} Definition 4.2.3, where $n=1, \ldots 4$ and $\kappa_n$ and $F_n$ are analytic functions in $\Oo(X)$.  For $z \in Z$ we have $0=\kappa_1(z) < \kappa_2(z)<\kappa_3(z) < \kappa_4(z)$ are the Hodge-Tate weights of $\sigma_z$. The case of  interest to us is  where for a point $z$ of weight $w(z)=(w_1, w_2)$ with $w_1\geq w_2$ we have $\kappa_2(z) = w_2-2$, $\kappa_3(z)=w_1-1$ and $\kappa_4(z) = w_1+w_2+3$.  We assume $\sigma_z$ is crystalline   and the eigenvalues of $\varphi$ on $D_{\rm cris}(\sigma_z)$ are given by $(p^{\kappa_1(z)}F_1(z), \ldots, p^{\kappa_4(z)}F_4(z))$. Furthermore, suppose there exists an involution $\tau:\Oo(X)[G_{\Sigma}] \to \Oo(X)[G_{\Sigma}]$ given by $\tau(g) = \Phi(g)g^{-1}$ for some character $\Phi: G_{\Sigma} \to \Oo(X)^{\times}$ with  $\Phi|_{D_p} = \epsilon^{\kappa_4(z)}$ such that  $\bfT \circ \tau=\bfT$.

We also assume that for $z \in Z$ the representation  $\sigma_z|_{D_p}$ is Siegel-ordinary, i.e. that  $$\sigma_z|_{D_p} \cong \bmat  \psi^{-1}\epsilon^{\kappa_4(z)} & * & * & * \\ & * & * & * \\ &*&*&*\\ &&&\psi\emat.$$ This is equivalent to demanding that $|F_1(z)|=1$ and then $\psi=\phi_{F_1(z)}$. The existence of $\tau$ then implies that $F_4(z)=F_1(z)^{-1}$. In addition we assume that $\sigma_z$ is $p$-distinguished, i.e., $\ov{\psi} \neq 1$.

 Fix $x_0\in X\setminus Z$ of weight $w(x_0)=(2,2)$ and from now we reserve the notation $E$ for the field $E(x_0)$ and denote by $\Oo$ the ring of integers in $E$ with uniformizer $\varpi$ and residue field $\bfF$.  Put $T=\bfT_{x_0}$ and $\sigma_2:= \sigma_{x_0}$. We assume that $T \equiv 1 + \tr(\rho) + \chi \mod{\varpi}$ for $\rho$ as in section \ref{Setup} and that $F_2(x_0) \neq 0$.

Let $\mS$ be a sequence of integers $k \equiv 2$ (mod $p^{m_k-1}(p-1)$)  with $m_k \to \infty$ as $k \to \infty$.  
We single out a sequence of points $z_k \in Z$  converging to $x_0$ with $w(z_k)=(k,k)$ for $k \in \mS$.
Denote the corresponding family of Galois representations $\sigma_k:=\sigma_{z_k}: G_{\Sigma} \to \GL_4(E_k)$, where we set $E_k:= E(z_k)$. Extending $E_k$ if necessary we may assume that $\Oo\subset \Oo_k$, where $\Oo_k$ is the ring of integers of $E_k$ with uniformizer $\varpi_k$. Then we define $n_k \in \bfZ_{\geq 0}$ to be  the largest integer $n$ such that $\tr \sigma_k \equiv T$ mod $\varpi^{n}$. Note the convergence $z_k \to x_0$ implies $n_k \to \infty$ as $k\to \infty $ but approaches 2 $p$-adically.

 We assume that for each $k\in \mS$  the representations $\sigma_k$ have the following properties (of which (2), (3) and (5) follow from the assumption made on $\bfT$ and so does (4) for $k\gg 0$, but we record them here again for the ease of reference): 
\begin{enumerate}
\item $\sigma_k$ is irreducible,
\item $\det \sigma_k = \epsilon^{4k-6}$,
\item $\sigma_k^{\vee} \cong \sigma_k(3-2k)$, 
\item $\ov{\sigma}_k^{\rm ss}\cong 1 \oplus \rho \oplus \chi$,  
\item  $\sigma_k|_{D_p}$ is crystalline with weights $2k-3, k-1, k-2, 0$ and $\sigma_k$ is Siegel-ordinary at $p$, i.e., $$\sigma_k|_{D_p} \cong\bmat  \phi_{\beta_k}^{-1}\epsilon^{2k-3} & * & * & * \\ & * & * & * \\ &*&*&*\\ &&&\phi_{\beta_k}\emat,$$ for $\beta_k \in \Oo_k^{\times}$ and we assume that $\beta_k \not\equiv 1$ mod $\varpi_k$,
i.e., $\ov{\sigma}_k$ is $p$-distinguished;
\item If $\ell \in \Sigma - \{p\}$ then $\ell \not \equiv 1$ (mod $p$) or $\sigma_k|_{I_{\ell}}$ is unipotent.
\end{enumerate}
We refer the reader to Remark \ref{Siegel1} for a relation between these properties of $\sigma_k$ and Siegel modular forms.

\begin{lemma} \label{HT}We have  \begin{enumerate}[(i)] \item $T|_{D_p}=\phi_{\beta}^{-1} \epsilon + \phi_{\beta} +\tr \gamma$ for $\beta = F_1(x_0)$ and a continuous representation $\gamma:D_p \to {\rm GL}_2(\Oo)$.
\item The pseudo-representation $T$ (or rather $\sigma_2$) has Hodge-Tate-Sen weights  0,0,1,1.  
\item  Furthermore, if  $\Psi$ is any character that occurs in the decomposition of $T|_{D_p}$ into pseudo-representations then we must have $\Psi|_{I_p} = \epsilon$ or $\Psi|_{I_p}=1$
\end{enumerate}
\end{lemma}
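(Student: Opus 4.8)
The plan is to transport the Siegel-ordinary, crystalline structure of the $\sigma_k$ at the classical points $z_k$ to $x_0$ using analyticity of the refined family, and then to read off the three assertions from Sen theory and from the theory of trianguline $(\varphi,\Gamma)$-modules at $x_0$ itself. For (i), I would start from property (5): at each $z_k\in Z$ the representation $\sigma_k|_{D_p}$ has a $D_p$-stable line on which $D_p$ acts by $\phi_{\beta_k}^{-1}\epsilon^{2k-3}$ and a $D_p$-quotient line on which it acts by $\phi_{\beta_k}$, with $\beta_k=F_1(z_k)$; since traces add along a composition series, $\tr\sigma_k|_{D_p}=\phi_{\beta_k}^{-1}\epsilon^{2k-3}+\phi_{\beta_k}+t_k$, where $t_k:=\tr\sigma_k|_{D_p}-\phi_{\beta_k}^{-1}\epsilon^{2k-3}-\phi_{\beta_k}$ is the $2$-dimensional pseudo-representation given by the trace of the middle graded piece. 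Then I would pass to the limit $z_k\to x_0$: analyticity gives $\beta_k\to F_1(x_0)=\beta$ (a unit, since $|F_1|=1$ on $Z$), and $k\equiv 2\pmod{p^{m_k-1}(p-1)}$ with $m_k\to\infty$ forces $k\to 2$ in $\bfZ_p$, hence $\epsilon^{2k-3}\to\epsilon$. Thus $\phi_{\beta_k}^{-1}\epsilon^{2k-3}\to\phi_\beta^{-1}\epsilon$, $\phi_{\beta_k}\to\phi_\beta$, and $t_k\to t:=T|_{D_p}-\phi_\beta^{-1}\epsilon-\phi_\beta$, a $2$-dimensional pseudo-representation of $D_p$ valued in $\Oo$. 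Since $p$ is odd, every $2$-dimensional pseudo-representation over $\Oo$ is the trace of a continuous representation (see \cite{BellaicheChenevierbook}), so $t=\tr\gamma$ for some $\gamma:D_p\to\GL_2(\Oo)$, and $T|_{D_p}=\phi_\beta^{-1}\epsilon+\phi_\beta+\tr\gamma$.

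For (ii), I would use that in a refined family the Sen polynomial of $\bfT$ at a point $x$ is $\prod_{n=1}^4(X-\kappa_n(x))$: this is immediate on the Zariski-dense set $Z$, where $\sigma_z$ is crystalline with Hodge--Tate weights $\kappa_n(z)$, and it extends to all of $X$ because the Sen operator varies analytically. Evaluating the weight recipe at $w(x_0)=(2,2)$ gives $(\kappa_1(x_0),\dots,\kappa_4(x_0))=(0,0,1,1)$ (consistent with the Hodge--Tate weights $2k-3,k-1,k-2,0$ of the $\sigma_k$), so the Hodge--Tate--Sen weights of $\sigma_2$ are $0,0,1,1$.

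For (iii), I would invoke the results of Kisin and Bella\"{\i}che--Chenevier on refined families: $D_{\rm rig}(\sigma_2|_{D_p})$ is trianguline (possibly after enlarging $E$), with a triangulation whose rank-one graded pieces are $\mathcal{R}_E(\delta_n)$ for $n=1,\dots,4$, where $\delta_n(p)$ equals $F_n(x_0)$ up to a power of $p$ — nonzero, using that $F_1(x_0)=\beta$ is a unit and the hypothesis $F_2(x_0)\neq 0$ (hence also $F_3(x_0)=F_2(x_0)^{-1}\neq 0$ by the self-duality forced by $\tau$) — and $\delta_n|_{\bfZ_p^\times}$ is the \emph{algebraic} character $u\mapsto u^{\kappa_n(x_0)}$ with no finite-order twist, because $w(x_0)=(2,2)$ is an algebraic weight with trivial nebentypus; in particular $\delta_n|_{I_p}=\epsilon^{\kappa_n(x_0)}|_{I_p}\in\{1,\epsilon|_{I_p}\}$ by (ii). Now let $\Psi$ be a character occurring in the decomposition of $T|_{D_p}$ into irreducible pseudo-representations. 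By (i), either $\Psi\in\{\phi_\beta,\phi_\beta^{-1}\epsilon\}$, in which case $\Psi|_{I_p}\in\{1,\epsilon|_{I_p}\}$ directly, or $\Psi$ is a constituent of $\gamma$; in the latter case $\gamma$ is reducible, so the Jordan--H\"older factors of $\sigma_2|_{D_p}$ are the four characters $\phi_\beta$, $\phi_\beta^{-1}\epsilon$ and the two constituents of $\gamma$. Applying the exact functor $D_{\rm rig}$ and Jordan--H\"older in the abelian category of $(\varphi,\Gamma)$-modules over $\mathcal{R}_E$, the multiset $\{\mathcal{R}_E(\delta_1),\dots,\mathcal{R}_E(\delta_4)\}$ coincides with the multiset of the $D_{\rm rig}$ of these four characters; since $\mathcal{R}_E(\eta)\cong\mathcal{R}_E(\eta')$ forces $\eta=\eta'$, it follows that $\Psi=\delta_n$ for some $n$, whence $\Psi|_{I_p}=\delta_n|_{I_p}\in\{1,\epsilon|_{I_p}\}$.

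The hard part is (iii). Sen theory together with the residual identity $T\equiv 1+\tr\rho+\chi\pmod{\varpi}$ only pins down each such $\Psi|_{I_p}$ up to unramified and finite-order twists — it does not, for instance, by itself exclude $\Psi|_{I_p}$ being the Teichm\"uller character — and the naive limiting argument breaks down because the $2$-dimensional middle pieces $\gamma_k$ of $\sigma_k|_{D_p}$ need not be ordinary: Siegel-ordinarity only controls $F_1(z_k)$, whereas $\gamma_k$ is crystalline with Hodge--Tate weights $\{k-2,k-1\}$ and $\varphi$-slopes $\{k-2+v_p(F_2(z_k)),\,k-1-v_p(F_2(z_k))\}$, which equal the Hodge slopes (the ordinary case) only when $v_p(F_2(z_k))\in\{0,1\}$, so $\tr\gamma_k|_{I_p}$ need not equal $1+\epsilon^{k-2}|_{I_p}$. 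Obtaining the \emph{exact} restriction to inertia therefore genuinely requires the trianguline structure at $x_0$ together with the algebraicity of the weight $(2,2)$; checking that the refined-family machinery applies at the non-classical, critical-slope point $x_0$ — in particular that the triangulation parameters $\delta_n$ are as described — is the technical core of the argument.
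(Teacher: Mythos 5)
Parts (i) and (ii) of your proposal are fine and essentially what the paper does: (i) follows from Siegel-ordinarity at the classical points plus continuity of pseudo-representations (though your appeal to ``every $2$-dimensional pseudo-representation over $\Oo$ is a trace'' is unnecessary and not a clean general fact in the residually reducible setting; it is simpler to read $\gamma$ off from the genuine representation $\sigma_2|_{D_p}$, whose semi-simplification must contain $\phi_\beta^{-1}\epsilon$ and $\phi_\beta$ by uniqueness of the decomposition into irreducible pseudo-representations), and (ii) is exactly the interpolation of the Sen polynomial in a refined family, which the paper gets from \cite{BellaicheChenevierbook}, Lemma 7.5.12.

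The genuine gap is in (iii), and you name it yourself: your entire argument rests on the claim that $D_{\rm rig}(\sigma_2|_{D_p})$ is trianguline with parameters $\delta_n$ whose restrictions to $\bfZ_p^{\times}$ are exactly $u\mapsto u^{\kappa_n(x_0)}$, with no finite-order twist, and you then defer the verification of precisely this claim as ``the technical core''. That claim is not a consequence of the general refined-family machinery you cite: $x_0$ is a non-classical, critical point, and at such points the Kisin/Bella\"iche--Chenevier results only continue a single crystalline period (the one attached to $F_1$ and the smallest weight), while the stronger global triangulation theorems control the specialized parameters only up to non-saturation/weight-shift phenomena and certainly do not rule out finite-order twists just because $w(x_0)$ is algebraic; pinning down $\delta_n|_{I_p}$ at $x_0$ is essentially equivalent to the statement of Lemma \ref{HT}(iii), so as written the key step is assumed rather than proved. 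The paper's proof needs much less than a full triangulation: following \cite{BergerBetina19}, it uses Siegel-ordinarity and $p$-distinguishedness to split off the $I_p$-stable line (Frobenius eigenvalue $F_4p^{\kappa_4}$) as a subsheaf $\mL$ of $\mM$, extracts from the rank-$3$ quotient a torsion-free rank-$2$ subsheaf $\mM'$ whose classical specializations are crystalline with a period of eigenvalue $F_ip^{\kappa_i}$ for $i=2$ or $3$, and applies the torsion-free analogue of \cite{BellaicheChenevierbook} Theorem 3.3.3(i) at $x_0$; since $F_2(x_0)\neq 0$ this forces one of $\Psi,\Psi'$ to be crystalline of Hodge--Tate weight $0$ or $1$, and the essential self-duality $T=T\circ\tau$ (giving $\Psi\Psi'=\epsilon$) then handles the other character. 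To make your trianguline route work you would have to prove the triangulation-at-$x_0$ statement with exact parameters, which is not easier than, and would in effect reproduce, this crystalline-period argument.
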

\begin{proof}
 For (i) we use the Siegel-ordinarity of the $\sigma_z$ for $z \in Z$ and continuity. 

For (ii) we apply \cite{BellaicheChenevierbook} Lemma 7.5.12 and deduce that  the Hodge-Tate-Sen weights in weight 2 are 0,0,1,1. 

For (iii) first note that the statement is clear if  $\Psi =\phi_{\beta}$ or $\Psi=\phi_{\beta}^{-1}\epsilon$. So we now consider the case when$\gamma|_{D_p}^{\rm ss}=\Psi \oplus \Psi'$ for some character $\Psi'$. Part (ii) tells us that $\Psi$ is Hodge-Tate of weight 0 or 1, so equal to a finite order character (not necessarily unramified) or the product of such a character and $\epsilon$. We want to use the crystallinity of $\sigma_z$ for $z\in Z$ to deduce that $\Psi$ is crystalline. Results of Kisin and Bella\"iche-Chenevier allow to continue crystalline periods for the smallest Hodge-Tate weight. Note that either $\phi_{\beta}$ or $\phi_{\beta}^{-1}\epsilon$ has the same Hodge-Tate weight as $\Psi$.  To be able to attribute the crystalline period to $\Psi$ (rather than $\phi_{\beta}$ or $\phi_{\beta}^{-1}\epsilon$) we use the Siegel-ordinary and $p$-distinguishedness assumptions we made on $\sigma_z$ for $z \in Z$:

As in \cite{BergerBetina19} proof of Theorem 5.3 (which uses geometric Frobenius convention, so considers representations dual to the ones we have here) we can quotient out the sheaf $\mM$ corresponding to $\Oo(X)[D_p]/\ker \bfT$ (cf. \cite{BellaicheChenevierbook} Lemma 4.3.7) by a subsheaf $\mL$ corresponding to the line stabilised by $I_p$ on which $\Frob_p$ acts by $F_4 p^{\kappa_4}$. The quotient sheaf $\widetilde{\mM/\mL}$ is generically of rank 3  and its semi-simplification specializes at $x_0$ to $\Psi \oplus \Psi' \oplus \phi_{\beta}$. As in the proof of \cite{BergerBetina19} Proposition 8.2 Siegel-ordinarity further tells us that $\widetilde{\mM/\mL}$ has a torsion-free subsheaf $\mM'$ of generic rank 2 such that the specialisations $\sigma'_z$ at $z \in Z$ are 2-dimensional crystalline representations with Hodge-Tate weights $\kappa_2(z), \kappa_3(z)$ and with crystalline period for the appropriate Hodge-Tate weight, i.e. $D_{\rm cris}(\sigma'_z)^{\varphi=F_i(z) p^{\kappa_i(z)}} \neq 0$ for $i=2$ or $3$. (Note that for $k \in \mS$ we have $\kappa_2(z_k)=k-2$ and $\kappa_3(z_k)=k-1$.) The semi-simplification of the sheaf $\mM'$ specializes at $x_0$ to  $\Psi \oplus \Psi'$.

Applying (the torsion-free analogue of) \cite{BellaicheChenevierbook} Theorem 3.3.3(i) to $\mM'$ then gives that $D_{\rm cris}(\mM')^{\varphi=F_i(x_0)p^{\kappa_i(x_0)}} \neq 0$. Since by assumption $F_2(x_0) \neq 0$ (and so also $F_3(x_0)\neq 0$) this means that one of the characters $\Psi$ or $\Psi'$ is crystalline, so equal to a power of the cyclotomic character times a finite order unramified character. As discussed before this power must be 0 or 1. As $T|_{D_p} = T|_{D_p}\circ \tau$ with $ \tau(g) = \epsilon(g) g^{-1}$ we get $\Psi\Psi'=\epsilon$.  So we are done. 
\end{proof}

\subsection{Possible splitting types  of $T$}
Now suppose that $T$ is reducible. Then $T$ is in one of the following cases:
\begin{itemize}
\item[(i)] $T=T_1+T_2 +T_3 +T_4$, where each $T_i$ is a character;
\item[(ii)] $T=T_1+ T_2 + T_3$, where $T_1$ and $T_3$ are characters and $T_2$ is an irreducible pseudo-representation of dimension 2 (we refer to this type of splitting as the \emph{Saito-Kurokawa type});
\item[(iii)] $T=T_1+T_2$, where $T_1$, $T_2$ are both irreducible pseudo-representations of dimension 2 (we refer to this type of splitting as the \emph{Yoshida type});
\item[(iv)] $T=T_1 + T_2$, where $T_1$ is an irreducible pseudo-representation of dimension 3 and $T_2$ is a character.
\end{itemize}

\begin{prop} \label{i and iv}  Cases (i) and (iv) cannot occur.
\end{prop}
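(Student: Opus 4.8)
The plan is to dispose of the two ``extreme'' splitting types by purely formal arguments: for case~(i) I would use only the residual structure of $T$, and for case~(iv) the essential self-duality of $\bfT$ together with the integrality of Hodge--Tate--Sen weights. Neither case requires the Selmer-group input needed for cases (ii) and (iii). The one residual fact I record at the outset is this: since $\ov{T}=1+\chi+\tr\rho$ and $\rho$ is absolutely irreducible, $\tr\rho$ is an irreducible $2$-dimensional pseudo-representation, so by the uniqueness of the decomposition of a pseudo-representation into irreducibles (Section~\ref{Setup}) the expression $1+\chi+\tr\rho$ is \emph{the} decomposition of $\ov{T}$ into irreducibles; in particular it has three constituents, exactly one of dimension $>1$, that one of dimension $2$.

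\emph{Case (i).} Suppose $T=T_1+T_2+T_3+T_4$ with each $T_i$ a character. Reducing modulo $\varpi$ gives $\ov{T}=\ov{T}_1+\ov{T}_2+\ov{T}_3+\ov{T}_4$, a sum of four $1$-dimensional, hence irreducible, pseudo-representations. By uniqueness of the decomposition of $\ov{T}$ into irreducibles the multiset $\{\ov{T}_1,\ov{T}_2,\ov{T}_3,\ov{T}_4\}$ must equal $\{1,\chi,\tr\rho\}$, which is impossible: the former has four constituents all of dimension $1$, the latter three constituents one of which has dimension $2$. This rules out case~(i).

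\emph{Case (iv).} Suppose $T=T_1+T_2$ with $T_1$ an irreducible $3$-dimensional pseudo-representation and $T_2=\psi$ a character; over a sufficiently large finite extension of $E$ write $\sigma_2\cong\sigma_1\oplus\psi$, where $\sigma_1$ is the irreducible $3$-dimensional representation with $\tr\sigma_1=T_1$. Specializing $\bfT\circ\tau=\bfT$ at $x_0$ (and using that trace determines a semisimple representation in characteristic $0$) gives $\sigma_2^{\vee}\cong\sigma_2\otimes\Phi_{x_0}^{-1}$, where $\Phi_{x_0}|_{D_p}=\epsilon^{\kappa_4(x_0)}$; moreover $\kappa_4(x_0)=1$, since $\kappa_4(z_k)=2k-3$ is the top Hodge--Tate weight of $\sigma_k$ and $\kappa_4$ is analytic, so $\kappa_4(x_0)=\lim_{k\to 2}(2k-3)=1$ (one may also read this off by comparing Hodge--Tate weights, or determinants, on the two sides of $\sigma_2^{\vee}\cong\sigma_2\otimes\Phi_{x_0}^{-1}$, using Lemma~\ref{HT}(ii)). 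Decomposing both sides of this isomorphism into irreducibles and matching the unique $3$-dimensional irreducible constituent, respectively the unique $1$-dimensional one, yields $\sigma_1^{\vee}\cong\sigma_1\otimes\Phi_{x_0}^{-1}$ and $\psi^{-1}\cong\psi\,\Phi_{x_0}^{-1}$, i.e.\ $\psi^2=\Phi_{x_0}$. Restricting to $D_p$ gives $(\psi|_{D_p})^2=\epsilon$, so the Hodge--Tate--Sen weight of $\psi|_{D_p}$ equals $\tfrac12$; but $\psi$ is a direct summand of $\sigma_2$, whose Sen weights are $0,0,1,1$ by Lemma~\ref{HT}(ii), so that weight lies in $\{0,1\}\subset\bfZ$. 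This contradiction rules out case~(iv).

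I do not expect a genuine obstacle here: both cases collapse once one has the uniqueness of the decomposition of a pseudo-representation into irreducibles, the residually irreducible factor $\rho$ (for (i)), and the integrality of Hodge--Tate weights together with the correct self-dual multiplier (for (iv)). The only point requiring care is pinning down $\Phi_{x_0}|_{D_p}=\epsilon$ at the non-classical point $x_0$; this is precisely where the contrast with the ``genuinely hard'' cases (ii) and (iii) shows up, since there the residual picture is entirely compatible with reducibility and one is forced to bound Selmer groups instead.
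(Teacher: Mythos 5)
Your proof is correct and follows essentially the same route as the paper: case (i) by comparing the residual decomposition $\ov{T}=1+\tr\rho+\chi$ (with $\tr\rho$ irreducible) against a sum of four characters, and case (iv) by specializing the self-duality $\bfT\circ\tau=\bfT$ at $x_0$ so that the unique $1$-dimensional constituent must satisfy $\psi^2|_{D_p}=\epsilon$, which is incompatible with Lemma \ref{HT}. The only (harmless) difference is that you contradict the integrality of the Hodge--Tate--Sen weights from Lemma \ref{HT}(ii), whereas the paper phrases the same contradiction via Lemma \ref{HT}(iii), i.e.\ $\xi|_{I_p}\in\{1,\epsilon\}$.
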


\begin{proof} Case (i) cannot occur because $\ov{\sigma}_k^{\rm ss}\cong 1 \oplus \rho \oplus \chi$ for every $k\in \mS$, so also $\ov{T}= 1 + \tr \rho + \chi$ and $\rho$ is irreducible (so also $\tr \rho$ is irreducible as a pseudo-representation). 

Let us now show that $T$ is not as in case (iv). 
Suppose $T$ is as in case (iv). Then $T = \xi + \tr \rho_0$, where $\xi: G_{\Sigma} \to \Oo^{\times}$ is a character and $\rho_0$ is a 3-dimensional irreducible representation.
As $T=T\circ \tau$, we must have $\xi|_{I_p} =\epsilon \xi|_{I_p}^{-1}$. This contradicts Lemma \ref{HT}(iii). 
\end{proof}

For an ordinary newform $g=\sum_{n=1}^{\infty} a_n(g)q^n$ of weight 2 let $L(g,s)$ denote the standard $L$-function of $g$ and let $L_p(g,2)$ be the $p$-adic $L$-value denoted by $ L_p^{\rm an}(g, \omega^{-1}, T=p)$   in section 2 of \cite{BergerKlosinAJMaccepted}.
The proof of the following theorem will be given in the next section.

\begin{thm} \label{SK case} Assume that $\rho|_{G_K}$ is absolutely irreducible for $K=\bfQ(\sqrt{(-1)^{(p-1)/2}p})$. Suppose that $L(g,1)L_p(g,2)\neq 0$ for all $p$-ordinary  newforms  $g$ of weight 2  and level dividing $N^2p$  such that 
$a_{\ell}(g) \equiv \tr\rho(\Frob_{\ell}) $ mod $\varpi$ for all primes $\ell \nmid Np$.  
 Then $T$ is not of Saito-Kurokawa type. 
\end{thm}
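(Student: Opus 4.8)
Suppose for contradiction that $T$ is of Saito-Kurokawa type, so $T = T_1 + T_2 + T_3$ with $T_1, T_3$ characters and $T_2$ an irreducible $2$-dimensional pseudo-representation. The plan is to run the standard ``congruence module / Selmer group'' argument: since each $\sigma_k$ ($k \in \mS$) is irreducible while $\tr \sigma_k \equiv T \bmod \varpi^{n_k}$ with $n_k \to \infty$, and since the residual representation $\ov{\sigma}_k$ is \emph{non-split} (this is the content of Ribet's Lemma / Urban's lattice construction, which the introduction promises is established in section~\ref{SK type}; here I would invoke Lemmas~\ref{lattice1} and~\ref{subtoquo} to arrange a non-split reduction with a prescribed Jordan--Hölder factor as sub and as quotient), the representations $\sigma_k$ produce nonzero classes in an $\Ext^1$ group between two of the three residual constituents $1, \rho, \chi$, of order growing with $n_k$. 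Since there are only finitely many possible non-split reduction types, after passing to an infinite subsequence $\mT \subseteq \mS$ all the $\sigma_k$, $k \in \mT$, have the \emph{same} non-split reduction, hence give rise to elements of unbounded order in a \emph{fixed} Selmer group, forcing that Selmer group to be infinite.

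**Identifying which Selmer group.** The next step is to pin down which Selmer group can appear. Using the essential self-duality $\sigma_k^\vee \cong \sigma_k(3-2k)$ together with the self-duality $T \circ \tau = T$ of the limit, the pairs of constituents glued together are constrained: by Lemma~\ref{HT}(iii) any character occurring in $T|_{D_p}$ restricts on $I_p$ to $1$ or $\epsilon$, and the self-duality forces the characters $T_1, T_3$ to be $\epsilon$-dual to each other; combined with $\ov{T} = 1 + \tr\rho + \chi$ this means $\{T_1 \bmod \varpi, T_3 \bmod \varpi\} = \{1, \chi\}$ and $T_2 \equiv \tr\rho$. The extension detected by $\sigma_k$ is then, after using the symmetry, an extension of $\rho$ by $1$ (equivalently, twisting, of $\chi$ by $\rho$), i.e. a class in $H^1_f(\bfQ, \rho)$ or in $H^1_f(\bfQ, \rho(-1)) = H^1_f(\bfQ, \rho \otimes \chi^{-1})$ (note $\det\rho = \chi$, so $\rho^\vee \cong \rho(-1)$, which is why only these two Selmer groups occur) — these are exactly the ``two possibilities'' referred to in the introduction. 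The local conditions at $p$ come from Siegel-ordinarity and $p$-distinguishedness (assumption (5) and $\beta_k \not\equiv 1$), which let one propagate the ordinary filtration to the limit as in Lemma~\ref{HT}; the conditions away from $p$ come from assumption (6) and the ramification hypotheses on $\rho$.

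**Contradiction via the Main Conjecture.** Finally, one invokes the Iwasawa Main Conjecture for the modular form $g$ giving rise to $\rho$: since $\rho$ is modular (Serre's conjecture, using that $\rho$ is odd, irreducible, with $\det\rho = \chi$), it arises from a $p$-ordinary newform $g$ of weight $2$ and level dividing $N^2 p$ with $a_\ell(g) \equiv \tr\rho(\Frob_\ell)$, and the hypothesis of the theorem says $L(g,1) L_p(g,2) \neq 0$. The Greenberg--Vatsal / Skinner--Urban Main Conjecture (in the form used already in \cite{BergerKlosinAJMaccepted}, which is why $L_p(g,2)$ is normalized to match the notation there, and where the hypothesis that $\rho|_{G_K}$ is absolutely irreducible for $K = \bfQ(\sqrt{(-1)^{(p-1)/2}p})$ is the standard irreducibility input) then bounds the relevant Selmer group $H^1_f$ in terms of (the $p$-adic valuation of) these $L$-values; non-vanishing of both $L$-values makes \emph{both} candidate Selmer groups finite. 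This contradicts the existence of classes of unbounded order, so $T$ is not of Saito-Kurokawa type. The main obstacle I expect is the bookkeeping in the middle step: carefully constructing the Galois-stable lattices so that the reduction is genuinely non-split with the Jordan--Hölder factors arranged to give a class in one of the \emph{two} self-dual-compatible Selmer groups (not some other extension), and checking that the local conditions defining that Selmer group are exactly the ones cut out by the family's $p$-adic Hodge data in the $k \to 2$ limit — i.e. that the classes really land in $H^1_f$ and not merely $H^1_\Sigma$.
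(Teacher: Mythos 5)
Your overall architecture coincides with the paper's: construct lattices with non-split reduction, pigeonhole the finitely many extension types to get an infinite subsequence landing in a single Selmer group, produce classes of unbounded order there, and contradict finiteness, which is what the hypothesis $L(g,1)L_p(g,2)\neq 0$ buys via Skinner--Urban and the Main Conjecture. However, there is a genuine gap at the modularity step. You invoke Serre's conjecture to say that the \emph{residual} representation $\rho$ is modular and then propose to bound ``the relevant Selmer group'' by the $L$-values of the resulting $g$. But the classes produced from the $\sigma_k$ are extensions modulo $\varpi^{n_k}$ whose two-dimensional constituent is the characteristic-zero representation $\tilde{\rho}$ with $\tr\tilde{\rho}=T_2$ (obtained from Taylor's theorem), not merely its reduction $\rho$; to place these classes in the Selmer groups attached to a lattice $T_g$ of a weight-2 ordinary newform and to apply the bounds by $L^N(g,1)$ and $L^N_p(g,2)$, one needs $\tilde{\rho}\cong\rho_g$ \emph{integrally}, which residual modularity cannot provide. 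The paper gets this from Lemma \ref{ordrhotilde} (ordinarity of $\tilde{\rho}$, using $p$-distinguishedness) together with Lemma \ref{l5.6}, a modularity lifting theorem of Diamond; that is where the hypothesis that $\rho|_{G_K}$ is absolutely irreducible is actually used (it is the Taylor--Wiles condition), not primarily as an irreducibility input to the Main Conjecture as you suggest, and the level bound $N^2p$ requires Livn\'e's argument and assumption (6) rather than Serre's recipe. This is also why the theorem quantifies over \emph{all} such newforms $g$: one does not know a priori which one realizes $\tilde{\rho}$.

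Two further points. First, ``classes of order growing with $n_k$'' only makes sense for extensions between the mod-$\varpi^{n_k}$ constituents of $T$ (this is what Urban's result, Lemma \ref{LemmaUrban}, supplies), not between the residual constituents $1,\rho,\chi$ as written; and one needs $\Psi_1=1$ and $\Psi_2=\epsilon$ exactly (the paper's Lemma \ref{Psis}, via assumption (6) and Lemma \ref{HT}(iii)), not just the residual identification and $\epsilon$-duality, or the coefficient twists of the two Selmer groups would not match the $L$-values in the hypothesis. Second, the step you defer as ``the main obstacle'' --- verifying that the classes satisfy the ordinary local condition at $p$ --- is in fact the bulk of the paper's argument (Lemma \ref{vanishing} and Proposition \ref{InSel}, which exploit Siegel-ordinarity and $p$-distinguishedness to split the relevant $D_p$-extensions, together with the descent from $\Oo_k$- to $\Oo$-coefficients in Corollary \ref{old scalars}); flagging it is not doing it. As it stands the proposal both leaves that verification open and routes the key modularity input through the wrong theorem.
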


Note that there are only finitely many (possibly none) forms $g$ as in Theorem \ref{SK case}.

In section \ref{Yoshida type} we discuss some conditions that guarantee that $T$ is not of Yoshida type either. All these results combined would guarantee that $T$ is in fact irreducible, however, the assumptions allowing us to rule out the Yoshida type are quite strong (cf. Remark \ref{limitation}).

\section{Siegel modular forms and Paramodular Conjecture}
\subsection{Siegel modular forms} \label{Siegel1}
We recall some facts about Siegel modular forms and their associated Galois representations to motivate our discussion. 
By Arthur's classification (see \cite{Arthur04} and \cite{GeeTaibi19}) cuspidal automorphic representations for ${\rm GSp}_4(\bfA_{\bfQ})$ fall into different types. Cuspidal automorphic representations $\pi$ whose transfer to $\GL_4$ stays cuspidal are called of ``general type" or type (G). Such type (G) representations are expected to have irreducible $p$-adic Galois representations (see \cite{Weiss19} for a summary of what's known and results in the low weight case).  Other types in the classification are known to be associated to reducible $p$-adic Galois representations, see \cite{BCGP}  Lemma 2.9.1. Particular examples of such types are the Saito-Kurokawa lifts and Yoshida lifts of elliptic modular forms, whose associated Galois representations have trace of Saito-Kurokawa or Yoshida type respectively. Schmidt \cite{SchmidtCAP} proved that holomorphic Siegel modular forms of paramodular level are either of type (G) or Saito-Kurokawa lifts, while other CAP types or Yoshida lifts do not occur. 

We denote by $U_{p,1}$ (resp. $U_{p,2}$) the Hecke operators associated to ${\rm diag}(1,1,p,p)$ (resp. ${\rm diag}(1,p,p^2,p)$). 
For $\pi$ of sufficiently high weight (i.e. corresponding to classical Siegel eigenforms of weights $k_1 \geq k_2 \geq 3$) we have the following result about properties of the associated Galois representations (for a more detailed statement see \cite{BCGP} Theorem 2.7.1):

\begin{thm}[Laumon, Weissauer, Sorensen, Mok, Faltings-Chai, Urban]
Suppose $\pi$ is a cuspidal automorphic representation for $\GSp_4(\bfA_{\bfQ})$ of weight $k_1 \geq k_2 \geq 3$. Then there is a continuous semi-simple representation $\rho_{\pi}: G_{\bfQ} \to \GSp_4(\ov{\bfQ}_p)$ with $$\rho_{\pi}^{\vee} \cong \rho_{\pi}(3-k_1-k_2)$$ satisfying the following properties:

\begin{enumerate}
\item For each prime  $\ell \neq p$  we have local-global compatibility up to semi-simplification with the local Langlands correspondence proved by Gan-Takeda. In particular,  if $\pi$ is unramified at $\ell$ then so is $\rho_{\pi}$ and  if $\pi$ is of Iwahori level at $\ell$ then $\rho_{\pi}|_{I_\ell}$ is unipotent. 
\item If $\rho_{\pi}$ is irreducible then for each prime $\ell \neq p$  one has local-global compatibility up to Frobenius semi-simplification. 
\item $\rho_{\pi}|_{D_p}$ is de Rham with Hodge-Tate weights $k_1+k_2-3, k_1-1, k_2-2, 0$.

\item Assume that $\pi$ is  Siegel-ordinary at $p$ (i.e $\lambda_{p,1}$ is a $p$-adic unit, $\lambda_{p,2}$ has finite $p$-valuation, where $\lambda_{p,i}$ is the $U_{p,i}$-eigenvalue of $\pi$ for $i=1,2$), then $\rho_{\pi}|_{D_p}$ is Siegel-ordinary in the sense of Definition 2.4 with the unramified character having $\lambda_{p,1}$ as value at $\Frob_p$.
\item If $\pi$ is unramified at $p$ then the $p$-adic representation $\rho_{\pi}$ is crystalline at $p$. If $\pi$ is also Siegel-ordinary  then the characteristic polynomial of Frobenius acting on $D_{\rm cris}(\rho_{\pi}|_{D_p})$ equals the Hecke polynomial. In particular, the eigenvalues are $$\lambda_{p,1}, \lambda_{p,1}^{-1}\lambda_{p,2} p^{k_2-2}, \lambda_{p,1}\lambda_{p,2}^{-1}p^{k_1-1}, \lambda_{p,1}^{-1}p^{k_1+k_2-3}.$$

\end{enumerate}
\end{thm}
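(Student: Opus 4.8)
This statement is a recollection of theorems of the authors named in the heading, so the plan is to assemble it as in \cite{BCGP} Theorem 2.7.1; let me indicate the provenance of the various items and how a self-contained account would run.

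One begins with Arthur's classification of the discrete automorphic spectrum of $\GSp_4$ (\cite{Arthur04}, available for $\GSp_4$ through \cite{GeeTaibi19}), which separates $\pi$ into the general type, where the transfer to $\GL_4(\bfA_{\bfQ})$ stays cuspidal, and the CAP, Yoshida and one-dimensional types. In every non-general case $\rho_\pi$ is, by construction, a direct sum of twists of two-dimensional Galois representations of elliptic newforms and of Hecke characters, and then all of the assertions --- local-global compatibility away from $p$ with unipotent $\rho_\pi|_{I_\ell}$ at Iwahori level, the de Rham property at $p$ with the Hodge--Tate weights in (3), crystallinity when $\pi$ is unramified at $p$, and the ordinarity and Frobenius-eigenvalue statements of (4)--(5) --- follow from the classical theory (Deligne, Faltings, Saito, Scholl); one has only to check that the Hodge--Tate weights and Satake parameters transport correctly under the transfer to give exactly the listed formulas. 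For $\pi$ of general type with $k_1\geq k_2\geq 3$, so that $\pi_\infty$ is cohomological, $\rho_\pi$ is cut out of the $\ell$-adic intersection cohomology of a Siegel modular threefold with coefficients in the local system attached to $(k_1,k_2)$ --- the construction of Weissauer, Laumon and Taylor, resting on the geometry and toroidal compactifications of Faltings--Chai --- and the symplectic-similitude self-duality $\rho_\pi^\vee\cong\rho_\pi(3-k_1-k_2)$ is Poincar\'e duality on that cohomology.

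The local properties are then filled in. Part (1), compatibility at $\ell\neq p$ up to semisimplification with the Gan--Takeda local Langlands correspondence and with unipotent $\rho_\pi|_{I_\ell}$ at Iwahori level, is the theorem of Sorensen and Mok; the refinement (2) to compatibility up to Frobenius-semisimplification when $\rho_\pi$ is irreducible uses purity of the monodromy filtration. Part (3) is the Faltings--Tsuji $p$-adic comparison theorem applied to the cohomology supporting $\rho_\pi$, the Hodge--Tate weights being the jumps of the Hodge filtration, which the normalisation renders equal to $k_1+k_2-3,\ k_1-1,\ k_2-2,\ 0$. For (5), if $\pi$ is unramified at $p$ the relevant Shimura variety has good reduction at $p$ and $\rho_\pi|_{D_p}$ is crystalline, and the identification of the characteristic polynomial of crystalline Frobenius on $D_{\rm cris}(\rho_\pi|_{D_p})$ with the normalised spin Hecke polynomial at $p$ --- hence the explicit eigenvalues --- is due to Urban. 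Finally (4) comes from the Hida-theoretic analysis of Siegel-ordinary families by Urban and Tilouine--Urban: a $p$-adic unit $\lambda_{p,1}$ produces, via the nearly-ordinary projector, a $D_p$-stable line on which Frobenius acts through $\phi_{\lambda_{p,1}}$, and comparing Hodge--Tate weights with (3) forces $\rho_\pi|_{D_p}$ into the Siegel-ordinary shape of Definition \ref{S-ord def}(2), with top exponent $k_1+k_2-3$ in place of $2k-3$.

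The layer requiring by far the most work --- which for us is only a matter of quoting the right references --- is this last round of local-global compatibility: matching crystalline Frobenius at $p$ with the Hecke polynomial, and treating ramified $\ell\neq p$ through the Gan--Takeda correspondence. Both lie considerably deeper than the cohomological construction of $\rho_\pi$ itself, and it is precisely through them that the clean statements (1), (2), (4) and (5) used elsewhere in this paper become available.
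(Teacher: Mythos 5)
Your sketch is correct and matches the paper's treatment: the paper offers no proof of its own, simply quoting the statement as a compilation of known results with a pointer to \cite{BCGP} Theorem 2.7.1, and your attributions (Laumon--Weissauer--Taylor and Faltings--Chai for the cohomological construction and self-duality, Sorensen--Mok via Gan--Takeda for (1)--(2), the $p$-adic comparison theorems for (3), and Urban, respectively Urban--Tilouine, for (5) and (4)) are the same sources that reference assembles. Nothing further is needed beyond the citations you give.
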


Suppose now that $\rho$  as in section \ref{Setup} equals $\ov{\rho}_f$ for $f \in S_2(Np)$. 
 If $f$ is ordinary it lies in a Hida family of eigenforms $f_k$. Brown et al. \cite{Brown07}, \cite{AgarwalBrown14}, \cite{BrownLi19preprint} then prove that there exists a family of holomorphic Siegel modular eigenforms $F_k$ for $k \in \mathcal{S}$ with $\mS$ as in section 3 of Iwahori level $N$ (level $\Gamma_0^{(2)}(N)$ or $\Gamma_{\rm para}(N)$ )  that are congruent to the Saito-Kurokawa lifts $SK(f_k)$ modulo $p$  and $\sigma_{F_k}$ is irreducible (see e.g. \cite{AgarwalBrown14} Corollary 7.5). The theorem above then shows that the associated Galois representations $\sigma_{F_k}$ satisfy the conditions (1)-(6) in section \ref{Main assumptions} except possibly for Siegel-ordinarity  (note that we assume that  $\rho$ is $p$-distinguished). 
To establish that the $\tr \sigma_{F_k}$ interpolate $p$-adically is work in progress. 

The pseudo-representation of the (Siegel-ordinary, tame level $N$) eigenvariety then gives rise to $\bfT:G_{\Sigma} \to \Oo(X)$ for an affinoid $X$ containing the limit point $x_0$ of weight $(2,2)$. One obtains a Zariski dense subset $Z\subset X$ of classical points that are old at $p$ such that $(X, \bfT, \{\kappa_n\}, \{F_n\}, Z)$ is a refined family in the sense of Bella\"iche-Chenevier. By the above theorem the function $F_1=F_4^{-1}$ interpolates the $U_{p,1}$-eigenvalue  $\lambda_{p,1}$, $F_2=F_3^{-1}$ interpolates $\lambda_{p,1}^{-1}\lambda_{p,2}$, so our assumption $F_2(x_0)\neq 0$ corresponds to the $U_{p,2}$-slope of the limit form being finite.

\subsection{Discussion of applicability to the Paramodular Conjecture} \label{paramod}
 For an elliptic modular form $f$ of weight $2k-2$ a holomorphic Saito-Kurokawa lift exists under the following conditions on $f$ and $k$: for $\Gamma_0^{2}(N)$-level  $k$  has to be even, for $\Gamma_{\rm para}(N)$-level the sign of the functional equation of $f$ has to be $-1$ (see \cite{Schmidt07}). 

Suppose $\rho=\ov{\rho}_f$ for an ordinary newform $f$ of level $N$.  For Theorem \ref{SK case} we need to assume that $L(f,1) \neq 0$. Continuing our discussion from the introduction about Saito-Kurokawa congruences, we note that in the case that $L(f,1) \neq 0$  we would therefore need to consider congruences with holomorphic $\Gamma_0^{2}(N)$-level Saito-Kurokawa lifts. However, a different method to the one used by Brown et al.  (pointed out to us by Pol van Hoften) could be used to prove the required congruences for paramodular level: Using the arguments from the proof of \cite{Sorensen09} Theorem D one should be able to prove congruences for the generic (as opposed to the holomorphic) Saito-Kurokawa lift, for which the conditions on $k$ and the root number are reversed. 

 Once the congruence between the generic Saito-Kurokawa lift and a type (G) form has been proved, one could then switch to the holomorphic element of the same packet.  
If such a congruence could be proved in weight 2 this would also explain the example of the abelian surface of conductor 997 mentioned in \cite{BergerKlosinAJMaccepted}  (which involves an elliptic modular form $f$ with root number $\epsilon=1$ and $L(f,1)=0$).

 To demonstrate that examples with $L(f,1) \neq 0$ occur when studying the modularity of abelian surfaces  we thank Andrew Sutherland for providing us with the following abelian surface: Let  $A$ be the Jacobian of the genus 2 curve
$$C: y^2 + (x + 1)y = -2x^6 + x^5 - x^4 + 9x^3 - 2x^2 + 2x - 9$$
(see \cite[\href{http://www.lmfdb.org/Genus2Curve/Q/1870/a/226270/1}{Genus 2 Curve 1870.a}]{lmfdb} and \cite{BSSVY}).
Then $A$ has conductor $1870 = 2*5*11*17$ and comparing values on $\Frob_{\ell}$ for $\ell<10^6$ indicates that $$A(\ov{\bfQ})[3] \cong 1 \oplus \ov{\rho}_f \oplus \chi$$  for $f$
the unique weight 2 newform of level $\Gamma_0(17)$ corresponding
to the isogeny class of rank 0 elliptic curves over $\bfQ$ with conductor 17.

\section{Ruling out Saito-Kurokawa type} \label{SK type}
We keep the notation and assumptions of section \ref{Setup}, \ref{Main assumptions} and Theorem \ref{SK case}. In this section we will prove Theorem \ref{SK case}.
Recall that by assumption (4) we have  $\ov{\sigma}_k^{\rm ss} = 1 \oplus \rho \oplus \chi$ for every $k \in \mS$.

\begin{lemma} \label{inde} Let $k\in \mS$. Then there exists a $G_{\Sigma}$-stable lattice $\Lambda$ in the representation space of $\sigma_k$ such that  \begin{itemize}
\item[(i)]
$$\ov{\sigma}_{k, \Lambda} = \bmat 1 & \ov{a}_k & \ov{b}_k \\ \ov{d}_k & \rho & \ov{c}_k \\ \ov{e}_k & \ov{f}_k & \chi\emat$$ and  for at least one $\ov{x}_k \in \{\ov{a}_k, \ov{c}_k , \ov{d}_k, \ov{f}_k\}$, the representation $\ov{\sigma}_{k, \Lambda}$ has a subquotient isomorphic to a non-semisimple representation of the form $\sigma_{\ov{x}_k}:=\bmat\ov{\rho}_1 & \ov{x}_k \\ & \ov{\rho}_2\emat$, where $\ov{\rho}_1, \ov{\rho}_2 $ are distinct elements of $\{1,\rho, \chi\}$, one of which equals $\rho$, and are determined by  $\ov{x}_k$.
\item[(ii)] $\ov{\sigma}_{k, \Lambda}$ is indecomposable.
\end{itemize}
\end{lemma}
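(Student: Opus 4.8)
The plan is to exploit the irreducibility of $\sigma_k$ together with Ribet-type lemmas to produce the desired lattice. First I would start from any $G_\Sigma$-stable $\Oo_k$-lattice in the representation space $V_k$ of $\sigma_k$; its reduction has semisimplification $1\oplus\rho\oplus\chi$ by assumption (4), so after choosing a basis adapted to a composition series we may write $\ov{\sigma}_{k,\Lambda}$ in block form with the three Jordan--Hölder factors $1,\rho,\chi$ arranged down the diagonal in some order, and various ``off-diagonal'' blocks $\ov a_k,\ov b_k,\ov c_k,\ov d_k,\ov e_k,\ov f_k$ above and below. The point of part (i) is that \emph{at least one} of the four blocks touching the $\rho$-factor (namely $\ov a_k,\ov c_k,\ov d_k,\ov f_k$ in the displayed normalization, i.e.\ the extensions of $\rho$ by $1$ or $\chi$, or of $1$ or $\chi$ by $\rho$) must be nonzero and give a genuinely non-split extension. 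This is where I would invoke the standard consequence of irreducibility: if \emph{every} extension between $\rho$ and the one-dimensional factors were split for \emph{every} choice of lattice, then (by the lattice-manipulation arguments of Lemmas \ref{lattice1} and \ref{subtoquo}, which let us move a chosen factor to be a subrepresentation or a quotient) one could split off $\rho$ as a direct summand of $\sigma_k$ over $E_k$, contradicting irreducibility. So for a suitable lattice $\Lambda$ there is a subquotient of $\ov{\sigma}_{k,\Lambda}$ of the form $\sigma_{\ov x_k}=\bsmat \ov\rho_1 & \ov x_k\\ & \ov\rho_2\esmat$ with $\ov x_k$ one of the four relevant blocks, $\{\ov\rho_1,\ov\rho_2\}\subset\{1,\rho,\chi\}$ distinct, one of them $\rho$, and this extension non-split.

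For part (ii), indecomposability of the whole $\ov{\sigma}_{k,\Lambda}$: having arranged a non-split subquotient involving $\rho$, I would then further adjust the lattice so that the full reduction is connected. The idea is the one sketched in the introduction and in section \ref{SK type}: Ribet's Lemma applied to the irreducible $\sigma_k$ guarantees that the ``extension graph'' on the three vertices $\{1,\rho,\chi\}$ — with an edge whenever the corresponding off-diagonal block can be made non-zero for some lattice — is connected. A connected graph on three vertices has at least two edges, so one can choose the lattice (ordering the composition factors appropriately, and using Lemma \ref{subtoquo} to flip a block from the upper to the lower triangle if needed) so that $\ov{\sigma}_{k,\Lambda}$ has no $G_\Sigma$-stable complement to any of its subrepresentations; concretely, one shows any $G_\Sigma$-equivariant idempotent endomorphism of $\ov{\sigma}_{k,\Lambda}$ must be $0$ or $1$ by chasing it through the non-trivial extensions forced by the connectivity. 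I would write the normalization in (i) (the specific arrangement $1,\rho,\chi$ down the diagonal with the indicated block names) as the end result of this process, observing that the pair $(\ov\rho_1,\ov\rho_2)$ is read off from \emph{which} of $\ov a_k,\ov c_k,\ov d_k,\ov f_k$ is the non-split one.

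The main obstacle I anticipate is the bookkeeping in making both (i) and (ii) hold \emph{simultaneously} with a \emph{single} lattice $\Lambda$, since a priori the lattice that exhibits a non-split subquotient through $\rho$ and the lattice that makes the whole representation indecomposable need not be the same, and changing the lattice can change which off-diagonal blocks vanish. The resolution is that connectivity of the extension graph is a property of $\sigma_k$ (not of a fixed lattice), so one has freedom to pick the composition series and the triangular position of each block independently: concretely I would order the factors and apply Lemma \ref{subtoquo} finitely many times to realize a spanning connected subgraph of non-split extensions inside one $\ov{\sigma}_{k,\Lambda}$, and then check that $\rho$ is necessarily incident to at least one of those edges (it cannot be an isolated vertex, else $\rho$ splits off). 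A secondary, more routine point is verifying that the ``$\bigl(\Oo_k$ vs.\ $\Oo\bigr)$'' distinction and the non-uniqueness of reductions do not cause trouble — this is handled exactly as in the remarks preceding Lemma \ref{lattice1}, since the semisimplification and the relevant $\Ext$-groups are lattice-independent.
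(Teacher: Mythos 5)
Your part (i) is in substance the paper's argument, but the way you justify it is circular: the claim that ``if every extension between $\rho$ and the one-dimensional factors were split for every choice of lattice, then $\rho$ would split off as a direct summand of $\sigma_k$ over $E_k$'' is not a consequence of Lemmas \ref{lattice1} and \ref{subtoquo}; it \emph{is} (a special case of) the generalized Ribet lemma. The paper makes this precise by citing Bella\"iche (Corollaire 1 of \cite{Bellaiche03}): the \emph{directed} graph on $\{1,\rho,\chi\}$, with an edge from $\rho'$ to $\rho''$ whenever some stable lattice has a non-split subquotient with sub $\rho'$ and quotient $\rho''$, is strongly connected. In particular $\rho$ has an outgoing and an incoming edge, and any such edge produces the non-semisimple subquotient $\sigma_{\ov{x}_k}$ with $\ov{x}_k\in\{\ov{a}_k,\ov{c}_k,\ov{d}_k,\ov{f}_k\}$. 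Since you do invoke this connectivity elsewhere, part (i) is fine once you lean on that citation rather than on the unproved splitting-off claim.

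The genuine gap is in part (ii). Connectivity of the extension graph only says that \emph{each} edge is witnessed by \emph{some} lattice; it does not let you ``realize a spanning connected subgraph of non-split extensions inside one $\ov{\sigma}_{k,\Lambda}$,'' and Lemma \ref{subtoquo} cannot do this either -- it merely exchanges the roles of sub and quotient for a fixed two-block reduction, and gives no mechanism for forcing two prescribed extension classes to be simultaneously non-zero in a single reduction. This is exactly the obstacle you flag, and your proposed ``resolution'' restates the problem rather than solving it; the subsequent idempotent-chasing argument has nothing to work with until the single lattice is actually produced. The paper resolves the simultaneity differently: take the lattice $\Lambda'$ furnished by (i); if $\ov{\sigma}_{k,\Lambda'}$ is already indecomposable, done; otherwise it decomposes as $\sigma_{\ov{x}_k}\oplus v$, where $v$ is the remaining Jordan--H\"older factor, and one applies Theorem 4.1 of \cite{BergerKlosinAJMaccepted} -- a Ribet-type lattice-change theorem with \emph{prescribed} subrepresentation and quotient -- with $v$ as sub and $\sigma_{\ov{x}_k}$ as quotient. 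The resulting lattice $\Lambda$ has reduction a non-split extension of $\sigma_{\ov{x}_k}$ by $v$, hence indecomposable, and it still exhibits the non-semisimple quotient $\sigma_{\ov{x}_k}$, so (i) and (ii) hold for the same $\Lambda$ (Corollary \ref{cor5.2} then handles the sub/quotient dichotomy via Lemma \ref{subtoquo}). Without this theorem, or some equivalent tool for prescribing both the sub and the quotient of a reduction, your part (ii) does not go through as written.
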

\begin{proof}  Consider the graph $\mG$ whose vertices are elements of the set $\mV=\{1, \rho, \chi\}$ and where we draw a \emph{directed} edge from $\rho' \in \mV$ to $\rho''\in \mV$ if there exists a $G_{\Sigma}$-stable lattice $\Lambda'$ such that  $\ov{\sigma}_{k, \Lambda'}$ has a subquotient isomorphic to a non-semi-simple representation of the form $\bmat\rho' & x \\ & \rho''\emat$. Then by a theorem of Bella\"iche for any two $\rho', \rho''\in \mV$, there exists a directed path from $\rho'$ to $\rho''$ (see Corollaire 1 in \cite{Bellaiche03}). In particular there must be at least one edge originating at $\rho$  and at least one edge ending at $\rho$. One of these edges gives rise to the desired $\sigma_{\ov{x}_k}$. This proves (i). If for such a $\Lambda'$ the representation $\ov{\sigma}_{k, \Lambda'}$ is already indecomposable then we are done. Otherwise $\ov{\sigma}_{k, \Lambda'}$ is the direct sum of $\sigma_{\ov{x}_k}$ and the remaining element $v$ of $\mV$. Hence we can  apply Theorem 4.1 in \cite{BergerKlosinAJMaccepted} with $\sigma_{\ov{x}_k}$ as a quotient and $v$ as a (non-semi-simple) subrepresentation. This gives us a new lattice $\Lambda$ so we have $\ov{\sigma}_{k, \Lambda}$ is a non-split extension of $\sigma_{\ov{x}_k}$ by $v$, hence indecomposable. 
\end{proof}
\begin{cor} \label{cor5.2} Under the same assumptions as in Lemma \ref{inde} the subquotient $\sigma_{\ov{x}_k}$ can be taken to be a subrepresentation or a quotient, i.e., there exists at least one lattice for each option.
\end{cor}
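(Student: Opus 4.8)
The plan is to combine Lemma~\ref{inde} with Lemma~\ref{subtoquo} and a short dimension count, so essentially all the work is already done. First I would apply Lemma~\ref{inde} to fix a $G_{\Sigma}$-stable lattice $\Lambda$ for which $\ov{\sigma}_{k,\Lambda}$ is indecomposable and admits $\sigma_{\ov{x}_k}$ as a subquotient. Since $\ov{\rho}_1,\ov{\rho}_2$ are distinct elements of $\{1,\rho,\chi\}$ one of which is $\rho$, the representation $\sigma_{\ov{x}_k}$ is $3$-dimensional and the remaining Jordan--H\"older factor $v$ is $1$-dimensional, so $\dim\sigma_{\ov{x}_k}+\dim v = 4 = \dim\ov{\sigma}_{k,\Lambda}$. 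Writing the subquotient as $U_1/U_2$ with $0\subseteq U_2\subsetneq U_1\subseteq\ov{\sigma}_{k,\Lambda}$ and $\dim U_1 - \dim U_2 = 3$, the only numerical options are $\dim U_2 = 0$, in which case $\sigma_{\ov{x}_k} = U_1$ is a genuine subrepresentation with $\ov{\sigma}_{k,\Lambda}/\sigma_{\ov{x}_k}\cong v$, or $\dim U_1 = 4$, in which case $U_2\cong v$ is a genuine subrepresentation with $\ov{\sigma}_{k,\Lambda}/v\cong\sigma_{\ov{x}_k}$. Hence for the lattice $\Lambda$ one of the two desired statements already holds.

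To obtain the other one I would invoke Lemma~\ref{subtoquo}, whose hypotheses are met because $\sigma_k$ is irreducible. If $\sigma_{\ov{x}_k}$ is a subrepresentation for $\Lambda$, pick a basis in which $\ov{\sigma}_{k,\Lambda}\cong\bmat\sigma_{\ov{x}_k}&*\\&v\emat$ and apply Lemma~\ref{subtoquo} with $r_1 = 3$, $r_2 = 1$ to produce a lattice $\Lambda'$ with $\ov{\sigma}_{k,\Lambda'}\cong\bmat\sigma_{\ov{x}_k}&\\ *&v\emat$; then $v$ spans a $G_{\Sigma}$-stable line and $\sigma_{\ov{x}_k}$ is realized as the quotient $\ov{\sigma}_{k,\Lambda'}/v$. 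Symmetrically, if $\sigma_{\ov{x}_k}$ is a quotient for $\Lambda$, write $\ov{\sigma}_{k,\Lambda}\cong\bmat v&*\\&\sigma_{\ov{x}_k}\emat$ and apply Lemma~\ref{subtoquo} with $r_1 = 1$, $r_2 = 3$ to get $\Lambda'$ with $\ov{\sigma}_{k,\Lambda'}\cong\bmat v&\\ *&\sigma_{\ov{x}_k}\emat$, exhibiting $\sigma_{\ov{x}_k}$ as a subrepresentation. One point worth recording is that the change of lattice in Lemma~\ref{subtoquo} is effected by conjugation by the block-scalar matrix $\diag(I_{r_1},\varpi^m I_{r_2})$ with $m\ge 1$, which does not alter the diagonal blocks, so the non-split internal extension defining $\sigma_{\ov{x}_k}$ is carried along unchanged.

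I do not expect a real obstacle: the substance lies entirely in Lemmas~\ref{inde} and~\ref{subtoquo}, and the only thing to check is the bookkeeping above, namely that a $3$-dimensional subquotient of a $4$-dimensional representation is automatically a sub or a quotient, and that Lemma~\ref{subtoquo} then interchanges the two roles while preserving the extension structure of $\sigma_{\ov{x}_k}$. If anything needs care it is making sure the block shapes fed into Lemma~\ref{subtoquo} are in the exact upper-triangular form required by that lemma's statement, which is immediate from the definitions of ``subrepresentation'' and ``quotient'' in a suitably adapted basis.
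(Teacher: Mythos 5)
Your proof is correct and takes essentially the same route as the paper, whose entire proof is a citation of Lemma \ref{subtoquo}: since $\sigma_{\ov{x}_k}$ is three-dimensional, it is automatically either a subrepresentation or a quotient of the four-dimensional $\ov{\sigma}_{k,\Lambda}$, and Lemma \ref{subtoquo} (whose conjugation leaves the diagonal blocks, hence the non-split class $\ov{x}_k$, unchanged) interchanges the two roles. Your write-up just makes this bookkeeping explicit; no gap.
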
 
\begin{proof} This follows from Lemma \ref{subtoquo}. \end{proof}

For the rest of the section  assume that $T= T_1 + T_2+T_3$ with $T_1, T_2, T_3$ where $\Psi_1:=T_1$ and $\Psi_2:=T_3$ are characters and $T_2$ is two-dimensional and irreducible. We assume that $\ov{\Psi}_1 = 1$, $\ov{\Psi}_2=\chi$ and $\ov{T}_2=\tr \rho$. Our goal is to show that these assumptions lead to a contradiction, and thus prove Theorem  \ref{SK case}. Since $T_2$ is irreducible we get by \cite{Taylor91} Theorem 1 that $T_2=\tr \tilde{\rho}$ for some irreducible 2-dimensional representation $\tilde{\rho}: G_{\Sigma} \to \GL_2(E)$ reducing to $\rho$.

\begin{lemma}\label{ordrhotilde} The representation $\tilde{\rho}$ is ordinary. \end{lemma}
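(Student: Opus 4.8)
The plan is to determine $\tilde{\rho}|_{D_p}$ completely by matching the decomposition $T=\Psi_1+T_2+\Psi_2$ against Lemma~\ref{HT}(i), and then to locate the ordinary line inside the $\tilde{\rho}$-factor using the Siegel-ordinarity of the family. Since $\sigma_2=\sigma_{x_0}$ is semisimple and $T=\tr\sigma_2$, the hypothesis $T=\Psi_1+T_2+\Psi_2$ gives $\sigma_2\cong\Psi_1\oplus\tilde{\rho}\oplus\Psi_2$ as $G_\Sigma$-representations, so $\sigma_2|_{D_p}=\Psi_1|_{D_p}\oplus\tilde{\rho}|_{D_p}\oplus\Psi_2|_{D_p}$. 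By Lemma~\ref{HT}(i), $T|_{D_p}=\phi_{\beta}^{-1}\epsilon+\phi_{\beta}+\tr\gamma$ with $\beta=F_1(x_0)$ and $\gamma\colon D_p\to\GL_2(\Oo)$ (here $\kappa_4(x_0)=1$). First I would record that $\ov{\phi_{\beta}}=\eta$: for $k\gg 0$ one has $\ov{\beta}=\ov{\beta_k}$, and since $\sigma_k|_{D_p}$ has the subcharacter $\phi_{\beta_k}^{-1}\epsilon^{2k-3}$, its reduction $\phi_{\ov{\beta_k}}^{-1}\chi$ (using $2k-3\equiv 1\bmod{p-1}$) is a Jordan--Hölder constituent of $(1\oplus\rho\oplus\chi)|_{D_p}$; being ramified and $\neq\chi$ (as $\ov{\beta_k}\neq1$ by assumption (5)) it must equal $\eta^{-1}\chi$, whence $\phi_{\ov{\beta_k}}=\eta$. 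Since $p$ is odd (so $\chi|_{I_p}\neq1$) and $\eta$ is nontrivial unramified, the four $D_p$-characters $1,\chi,\eta,\eta^{-1}\chi$ are pairwise distinct.

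Next I would compare the two expressions for $T|_{D_p}$ via uniqueness of the decomposition of a pseudo-representation into irreducibles. If $T_2|_{D_p}$ were irreducible, then counting irreducible components forces $\gamma$ irreducible, $T_2|_{D_p}=\tr\gamma$ and $\{\Psi_1|_{D_p},\Psi_2|_{D_p}\}=\{\phi_{\beta},\phi_{\beta}^{-1}\epsilon\}$; reducing gives $\{1,\chi\}=\{\eta,\eta^{-1}\chi\}$, contradicting the distinctness above. Hence $T_2|_{D_p}=c+c'$ for characters $c,c'$ with $\{\ov{c},\ov{c'}\}=\{\eta,\eta^{-1}\chi\}$ (since $\ov{T_2}=\tr\rho$), and then $\gamma$ is reducible too, say $\tr\gamma=g_1+g_2$ with $\{\ov{g_1},\ov{g_2}\}=\{1,\chi\}$ (from $\ov{\tr\gamma}=\ov{T}|_{D_p}-\ov{\phi_{\beta}}-\ov{\phi_{\beta}^{-1}\epsilon}=1+\chi$). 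The multiset identity $\{\Psi_1|_{D_p},c,c',\Psi_2|_{D_p}\}=\{\phi_{\beta},\phi_{\beta}^{-1}\epsilon,g_1,g_2\}$ now has all four reductions distinct on each side, so reductions match bijectively; as $\phi_{\beta}$ (resp.\ $\phi_{\beta}^{-1}\epsilon$) is the only term on the right reducing to $\eta$ (resp.\ $\eta^{-1}\chi$), we conclude $\{c,c'\}=\{\phi_{\beta},\phi_{\beta}^{-1}\epsilon\}$ and $\{\Psi_1|_{D_p},\Psi_2|_{D_p}\}=\{g_1,g_2\}$. In particular $\tilde{\rho}|_{D_p}$ is $2$-dimensional with $(\tilde{\rho}|_{D_p})^{\mathrm{ss}}\cong\phi_{\beta}\oplus\phi_{\beta}^{-1}\epsilon$.

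Finally I would upgrade this to the statement that $\phi_{\beta}^{-1}\epsilon$ is a \emph{subrepresentation} of $\tilde{\rho}|_{D_p}$. Following the sheaf-theoretic construction in the proof of Lemma~\ref{HT}(iii) (cf.\ \cite{BergerBetina19}), Siegel-ordinarity of the $\sigma_z$, $z\in Z$, produces a rank-one saturated subsheaf $\mL\subset\mM$ on which $D_p$ acts through $\phi_{F_4}\epsilon^{\kappa_4}$; specializing at $x_0$ (so $\kappa_4=1$, $F_4(x_0)=\beta^{-1}$) yields a $D_p$-stable line in $\sigma_2|_{D_p}$ on which $D_p$ acts by $\phi_{\beta}^{-1}\epsilon$. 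Since $\ov{\phi_{\beta}^{-1}\epsilon}=\eta^{-1}\chi$ differs from $\ov{\Psi_1|_{D_p}}=1$ and $\ov{\Psi_2|_{D_p}}=\chi$, we have $\Hom_{D_p}(\phi_{\beta}^{-1}\epsilon,\Psi_i|_{D_p})=0$ for $i=1,2$, so this line lies inside the summand $\tilde{\rho}|_{D_p}$ of $\sigma_2|_{D_p}$. As $(\tilde{\rho}|_{D_p})^{\mathrm{ss}}\cong\phi_{\beta}\oplus\phi_{\beta}^{-1}\epsilon$, having $\phi_{\beta}^{-1}\epsilon$ as a subrepresentation forces $\tilde{\rho}|_{D_p}\cong\bmat\phi_{\beta}^{-1}\epsilon&*\\&\phi_{\beta}\emat$; with $\psi=\phi_{\beta}$ (unramified) and $k=2$ (so $\epsilon^{k-1}=\epsilon$) this is precisely Definition~\ref{S-ord def}(1), i.e.\ $\tilde{\rho}$ is ordinary.

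The step I expect to be the main obstacle is this last one: the semisimplification $(\tilde{\rho}|_{D_p})^{\mathrm{ss}}\cong\phi_{\beta}\oplus\phi_{\beta}^{-1}\epsilon$ does not by itself distinguish the ordinary extension from the opposite ("anti-ordinary") one, and ruling out the latter genuinely requires propagating the ordinary filtration from the classical points $z\in Z$ to $x_0$ via the sheaf-theoretic continuation results of Kisin and Bellaïche--Chenevier, exactly as invoked in Lemma~\ref{HT}(iii); everything before that is bookkeeping with pseudo-representations and reductions.
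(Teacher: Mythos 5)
The first half of your argument --- pinning down $(\tilde{\rho}|_{D_p})^{\rm ss}\cong\phi_{\beta}^{-1}\epsilon\oplus\phi_{\beta}$ from Lemma \ref{HT}(i) together with the residual distinctness of the four $D_p$-characters (which comes from $p$-distinguishedness, $\ov{\beta}\neq 1$) --- is correct and is essentially the paper's own first step. The divergence, and the genuine gap, is in your final step. What the machinery invoked in the proof of Lemma \ref{HT}(iii) actually delivers is a statement about crystalline periods of specializations of the quotient sheaf (a $D_{\rm cris}$-eigenvector via the torsion-free analogue of \cite{BellaicheChenevierbook} Theorem 3.3.3), \emph{not} a $D_p$-stable line inside the fiber at $x_0$. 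To make your step work you would need: (a) that $D_p$ acts on $\mL$ by $\phi_{F_4}\epsilon^{\kappa_4}$ at \emph{all} points, not just on $Z$ (plausible if $\mL$ is a line bundle, but it must be said); (b) that $\mL_{x_0}\to\mM_{x_0}$ is injective --- saturatedness gives a torsion-free quotient, which over a base of dimension $>1$ does not kill the relevant $\mathrm{Tor}_1$, so fiberwise injectivity is not automatic; and (c) that a $D_p$-stable line in $\mM_{x_0}$ yields one in $\tilde{\rho}|_{D_p}$ --- note $\mM_{x_0}$ is not $\sigma_2$ but a possibly non-semisimple module of possibly larger dimension whose Jordan--H\"older factors merely coincide with those of $\sigma_2$, so one still needs a filtration argument (which does work, using that $\phi_{\beta}^{-1}\epsilon$ is residually distinct from $\Psi_1|_{D_p},\Psi_2|_{D_p}$). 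The point $x_0$ is exactly a critical, reducible specialization where ``the ordinary filtration passes to the limit'' is delicate; asserting it ``exactly as invoked in Lemma \ref{HT}(iii)'' is not accurate, and none of (a)--(c) is supplied.

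Moreover, your closing diagnosis --- that ruling out the anti-ordinary extension ``genuinely requires'' propagating the filtration from $Z$ --- is not correct, and this is where the paper finishes much more cheaply. If $\tilde{\rho}|_{D_p}\cong\bmat \phi_{\beta} & * \\ & \phi_{\beta}^{-1}\epsilon\emat$, then $\tilde{\rho}|_{D_p}$ has a one-dimensional unramified subrepresentation; since $\ov{\tilde{\rho}}\cong\rho$ is irreducible (so the reduction is well defined), reducing a stable lattice (cf. Lemma \ref{lattice1}) would give an unramified $D_p$-subcharacter of $\rho|_{D_p}$. This is excluded by the standing hypotheses of section \ref{Setup}: the $D_p$-subcharacter $\eta^{-1}\chi$ of $\rho$ is ramified and $\rho|_{I_p}$ is non-split, so $\rho|_{D_p}$ has no unramified sub. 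Thus the residual assumptions already force the ordinary shape, and the heavy family-theoretic input you defer to is unnecessary for this lemma.
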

\begin{proof} By Lemma \ref{HT} we have  $\sigma_2 |_{D_p}^{\rm ss} = \phi_{\beta}^{-1} \epsilon\oplus \phi_{\beta} \oplus \gamma$, where $\gamma$ is two-dimensional. Since $\beta \not\equiv 1$ mod $\varpi$ by our assumption (5), we cannot have $\Psi_1|_{D_p}, \Psi_2|_{D_p} \in \{ \phi_{\beta}^{-1} \epsilon, \phi_{\beta}\}$. Hence it must be the case that $\tilde{\rho}|_{D_p}^{\rm ss} \cong \phi_{\beta}^{-1}\epsilon \oplus \phi_{\beta}$. Suppose $\tilde{\rho}|_{D_p} \cong \bmat \phi_{\beta} & * \\ 0 & \phi_{\beta}^{-1}\epsilon\emat$. Note that $\ov{\tilde{\rho}}\cong \rho$ is irreducible, so in particular well-defined and we have by assumption that $\rho|_{D_p}$ does not have an unramified subrepresentation of dimension 1. Thus neither can $\tilde{\rho}|_{D_p}$. Hence we get that $\tilde{\rho}|_{D_p} \cong \bmat \phi_{\beta}^{-1}\epsilon & * \\ 0 & \phi_{\beta}\emat$ as desired.
\end{proof}

Recall that for every $k\in \mS$ we write $n_k$ for the largest integer such that $\tr \sigma_k \equiv T$ (mod $\varpi^{n_k}$).   Note that under the assumptions from section \ref{Main assumptions} one clearly has  $n_k\to \infty$ as $k$ approaches 2 $p$-adically.
\begin{lemma} \label{LemmaUrban} Let $k \in \mS$, $\mJ=\{\Psi_1, \tilde{\rho}, \Psi_2\}$ and let $\Lambda$ be a lattice from Lemma \ref{inde} so that $$\ov{\sigma}_k:= \ov{\sigma}_{k, \Lambda} = \bmat \ov{\rho}_3 & * & * \\ & \ov{\rho}_1 & \ov{x}_k \\ && \ov{\rho}_2\emat$$ is indecomposable with non-semi-simple 3-dimensional quotient $\bmat  \ov{\rho}_1 & \ov{x}_k \\ & \ov{\rho}_2\emat$ (cf. Corollary \ref{cor5.2}).  Then $$\sigma_k \cong_{\Oo_k} \bmat \rho_3 & y_k & z_k \\ & \rho_1 & x_k \\ && \rho_2 \emat \pmod{\varpi^{n_k}}.$$  Here $\rho_i$ are distinct elements of $\mJ$ and $\rho_i = \ov{\rho}_i$ mod $\varpi$ and $x_k = \ov{x}_k$ mod $\varpi$. In particular the class $[x_k] \in H^1(\bfQ, \Hom(\rho_2, \rho_1)\otimes \Oo_k/\varpi^{n_k})$ has the property that $\varpi^{n_k-1}[x_k] \neq 0$.
\end{lemma}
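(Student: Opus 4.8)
The plan is to combine the congruence $\tr\sigma_k \equiv T \pmod{\varpi^{n_k}}$ with the reducibility $T = T_1 + T_2 + T_3$ to produce an actual block upper-triangular lift of the residual representation $\ov\sigma_{k,\Lambda}$ modulo $\varpi^{n_k}$, and then extract the Selmer class. First I would pass from the pseudo-representation $\bfT$ to the genuine representation $\sigma_k$ and reduce modulo $\varpi^{n_k}$: since $\tr\sigma_k \equiv T_1 + \tr\tilde\rho + T_2$ modulo $\varpi^{n_k}$, the reduction $\sigma_k \bmod \varpi^{n_k}$ is a representation over the Artinian local ring $\Oo_k/\varpi^{n_k}$ whose trace equals the sum of three traces, two of which (the characters $\Psi_1,\Psi_2$) are one-dimensional and one of which ($\tr\tilde\rho$) is the trace of the absolutely irreducible residual $\rho$. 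The idempotent/Cayley-Hamilton machinery for Cayley-Hamilton quotients of such pseudo-representations (e.g. \cite{BellaicheChenevierbook} Ch.\ 1, or equivalently the generalized matrix algebra structure) shows that, after an appropriate choice of basis of the lattice $\Lambda$ matching the residual filtration from Lemma \ref{inde}, $\sigma_k \bmod \varpi^{n_k}$ is conjugate over $\Oo_k/\varpi^{n_k}$ to a block upper-triangular matrix with diagonal blocks lifting $\ov\rho_3,\ov\rho_1,\ov\rho_2$; because $\rho$ is absolutely irreducible, the block corresponding to $\tilde\rho$ is forced (by Carayol/Serre lifting of absolutely irreducible traces over local Artinian rings) to be $\tilde\rho \bmod \varpi^{n_k}$ itself, and the two characters on the diagonal are $\Psi_1,\Psi_2 \bmod \varpi^{n_k}$. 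This gives the displayed shape $\sigma_k \cong \bigl(\begin{smallmatrix}\rho_3 & y_k & z_k \\ & \rho_1 & x_k \\ & & \rho_2\end{smallmatrix}\bigr) \bmod \varpi^{n_k}$ with $\rho_i \in \mJ$ the claimed lifts.

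Next I would match the bottom-right $2\times 2$ block with the residual data. The lattice $\Lambda$ was chosen in Lemma \ref{inde} so that $\ov\sigma_{k,\Lambda}$ has $\bigl(\begin{smallmatrix}\ov\rho_1 & \ov x_k \\ & \ov\rho_2\end{smallmatrix}\bigr)$ as a \emph{quotient} (using Corollary \ref{cor5.2} to arrange the subquotient $\sigma_{\ov x_k}$ as a quotient), so after possibly re-conjugating within the filtration I can take the basis above to be compatible with $\Lambda$, ensuring $x_k \equiv \ov x_k \pmod\varpi$. The cocycle relation for the block upper-triangular form over $\Oo_k/\varpi^{n_k}$ says precisely that the off-diagonal entry $x_k$ defines a class $[x_k]\in H^1(G_\Sigma, \Hom(\rho_2,\rho_1)\otimes \Oo_k/\varpi^{n_k}) = H^1(\bfQ, \Hom(\rho_2,\rho_1)\otimes \Oo_k/\varpi^{n_k})$.

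Finally, for the non-vanishing statement $\varpi^{n_k-1}[x_k]\neq 0$, I would argue by contradiction: if $\varpi^{n_k-1}[x_k]=0$, then after a change of basis by a matrix of the form $\mathrm{id} + \varpi^{?}(\text{off-diagonal})$ the extension $\bigl(\begin{smallmatrix}\ov\rho_1 & \ov x_k \\ & \ov\rho_2\end{smallmatrix}\bigr)$ obtained by further reducing mod $\varpi$ would split, contradicting the indecomposability/non-splitness of the subquotient $\sigma_{\ov x_k}$ guaranteed by Lemma \ref{inde}(i). More precisely: $[x_k]$ is a class in $H^1$ with coefficients in the cyclic module $\Oo_k/\varpi^{n_k}$; its image under reduction $\Oo_k/\varpi^{n_k}\to\Oo_k/\varpi$ is the class of the residual extension $\sigma_{\ov x_k}$, which is non-zero since $\sigma_{\ov x_k}$ is non-semisimple; and a class in $H^1(-,\Oo_k/\varpi^{n_k})$ whose reduction mod $\varpi$ is nonzero cannot be killed by $\varpi^{n_k-1}$, because multiplication by $\varpi^{n_k-1}$ factors through that very reduction map (as $\varpi^{n_k-1}\colon \Oo_k/\varpi^{n_k}\to\Oo_k/\varpi^{n_k}$ has image $\varpi^{n_k-1}\Oo_k/\varpi^{n_k}\cong\Oo_k/\varpi$ and the induced map on $H^1$ is, up to this identification, the reduction map). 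Hence $\varpi^{n_k-1}[x_k]\neq 0$.

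The main obstacle I anticipate is the first step: carefully justifying that the congruence of traces modulo $\varpi^{n_k}$, together with the reducibility of $T$ and absolute irreducibility of $\rho$, really does force $\sigma_k \bmod \varpi^{n_k}$ into the stated block-triangular form over the Artinian ring $\Oo_k/\varpi^{n_k}$ with the diagonal blocks being exactly $\rho_3,\rho_1,\rho_2$ — i.e. lifting the structure of a reducible pseudo-representation to a filtered representation over a non-reduced coefficient ring, and identifying the middle block with $\tilde\rho$ rather than some other lift of $\rho$. This is where the generalized matrix algebra / Cayley-Hamilton formalism of \cite{BellaicheChenevierbook} and a residual-irreducibility argument à la \cite{Urban01} must be invoked with care; everything after that is essentially cohomological bookkeeping.
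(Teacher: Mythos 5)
Your overall route is the same one the paper relies on, but the paper does not reprove it: the block upper-triangular form of $\sigma_k$ modulo $\varpi^{n_k}$ with diagonal blocks $\rho_3,\rho_1,\rho_2$ lifting the chosen residual configuration, together with the assertion that the resulting class is not annihilated by $\varpi^{n_k-1}$, is quoted directly from Remarks (a) and (d) of \cite{Urban99} (cf.\ Theorem 1.1 of \cite{Brown08}). What you yourself flag as the ``main obstacle'' --- producing, from the trace congruence and the decomposition of $T$ into residually distinct pieces, a lattice whose mod-$\varpi^{n_k}$ reduction is triangular with those diagonal blocks, whose individual diagonal blocks are $\Psi_1$, $\tilde\rho$, $\Psi_2$ mod $\varpi^{n_k}$ (not merely blocks whose traces sum to $T$), and whose mod-$\varpi$ reduction realizes the specific non-split quotient $\sigma_{\ov{x}_k}$ from Lemma \ref{inde} so that $x_k\equiv\ov{x}_k$ mod $\varpi$ --- is precisely the content of the cited result, and your appeal to ``idempotent/Cayley--Hamilton machinery'' does not carry it out. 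As an outline this is acceptable (it is morally how Urban's theorem is proved), but as written the first step is a restatement of the citation rather than a proof.

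There is also a concrete logical slip in your last step. Factoring multiplication by $\varpi^{n_k-1}$ through the reduction map only yields the implication ``reduction zero $\Rightarrow$ $\varpi^{n_k-1}[x_k]=0$'', not the converse you need: writing $M=\Hom(\rho_2,\rho_1)\otimes\Oo_k/\varpi^{n_k}$, the class $\varpi^{n_k-1}[x_k]$ is the image of the reduced class under $H^1(G_\Sigma, M/\varpi M)\cong H^1(G_\Sigma,\varpi^{n_k-1}M)\to H^1(G_\Sigma,M)$ induced by the inclusion $\varpi^{n_k-1}M\subset M$, and this map can have a kernel (namely the image of $H^0(G_\Sigma, M/\varpi^{n_k-1}M)$ under the connecting homomorphism), so a class with nonzero mod-$\varpi$ reduction can in general be killed by $\varpi^{n_k-1}$. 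The argument is saved in the present situation because one of $\ov{\rho}_1,\ov{\rho}_2$ equals $\rho$ and the other is a character, so $\Hom(\ov{\rho}_2,\ov{\rho}_1)$ is an irreducible (two-dimensional) twist of $\rho$, whence $H^0(G_\Sigma, M/\varpi^jM)=0$ for all $j$ by d\'evissage and the above map is injective; with this $H^0$-vanishing added, your deduction ``nonzero residual extension $\Rightarrow$ $\varpi^{n_k-1}[x_k]\neq 0$'' is correct (and this is also the way the paper's terse appeal to non-semi-simplicity of the quotient should be read). Please make that step explicit.
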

\begin{proof} This follows from Remarks (a) and (d) in \cite{Urban99} (cf. also Theorem 1.1 in \cite{Brown08}). The last statement follows directly from the fact that the quotient $\bmat \ov{\rho}_1 & \ov{x}_k \\ & \ov{\rho}_2\emat$ is not semi-simple. \end{proof}

\begin{lemma} \label{l5.6} There exists an ordinary newform $g$ of weight 2 and level dividing $N^2p$ such that $\tilde{\rho} = \rho_g$.\end{lemma}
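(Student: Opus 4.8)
The plan is to prove $\tilde{\rho}$ modular by applying an ordinary modularity lifting theorem to its residually modular reduction $\rho$, and then to read off the weight and level of $g$ from the local information already extracted in Lemmas \ref{HT} and \ref{ordrhotilde}. First I would collect the relevant properties of $\tilde{\rho}$. It is irreducible, since $\tr\tilde{\rho}=T_2$ is irreducible, and hence absolutely irreducible, because its reduction $\ov{\tilde{\rho}}\cong\rho$ is. It is odd: $\det\tilde{\rho}(c)$ is a square root of $1$ in $\Oo^{\times}$ reducing to $\det\rho(c)=\chi(c)=-1$ modulo $\varpi$, so it equals $-1$. By Lemma \ref{ordrhotilde} it is ordinary at $p$, with $\tilde{\rho}|_{D_p}\cong\bsmat \phi_{\beta}^{-1}\epsilon & * \\ 0 & \phi_{\beta}\esmat$; since $\phi_{\beta}^{-1}\epsilon$ and $\phi_{\beta}$ differ from $\epsilon$ and $1$ by unramified characters we get $\det\tilde{\rho}|_{D_p}=\epsilon$, and $\tilde{\rho}$ has Hodge--Tate weights $\{0,1\}$ at $p$ (compare Lemma \ref{HT}). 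Finally $\rho$ itself is modular by Serre's conjecture (Khare--Wintenberger): being odd, absolutely irreducible, of determinant $\chi$ and of prime-to-$p$ conductor $N$, it equals $\ov{\rho}_f$ for a $p$-ordinary newform $f$ of weight $2$ and level $N$.

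Next I would invoke the modularity lifting theorem of Skinner and Wiles for nearly ordinary deformations of an absolutely irreducible residual representation. Its hypotheses hold here: $\rho$ is absolutely irreducible, ordinary and $p$-distinguished; $\tilde{\rho}$ is a nearly ordinary lift of $\rho$ with the same ordinary structure at $p$; and the residual image is large enough, the assumption in Theorem \ref{SK case} that $\rho|_{G_K}$ is absolutely irreducible for $K=\bfQ(\sqrt{(-1)^{(p-1)/2}p})$ supplying the Taylor--Wiles condition. (The ordinary framework is precisely what is needed, since $\tilde{\rho}$ is not assumed crystalline, or even de Rham, at $p$ a priori.) This gives $\tilde{\rho}\cong\rho_g$ for some newform $g$, and $g$ is $p$-ordinary because $\tilde{\rho}$ is ordinary at $p$.

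It then remains to control the weight and level of $g$. The Hodge--Tate weights $\{0,1\}$ force $g$ to have weight $2$. Away from $p$, $\tilde{\rho}$ is unramified outside $\Sigma$, and for $\ell\mid N$ the representation $\rho|_{I_{\ell}}$ is tamely ramified --- since $N$ is squarefree and equals the prime-to-$p$ conductor of $\rho$, the conductor exponent of $\rho$ at $\ell$ equals $1$, so its Swan conductor vanishes --- whence the image under $\tilde{\rho}$ of the wild inertia $P_{\ell}$, being a pro-$\ell$ subgroup of $\GL_2(\Oo)$, injects under reduction into $\GL_2(\bfF)$ onto $\rho(P_{\ell})=\{1\}$, so $\tilde{\rho}|_{P_{\ell}}$ is trivial and the conductor exponent of $\tilde{\rho}$ at $\ell$ is at most $2$. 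At $p$, since $\det\tilde{\rho}|_{I_p}=\epsilon|_{I_p}$, the nebentypus of $g$ is unramified at $p$; as $\rho_g|_{D_p}$ is moreover reducible with unramified characters on the diagonal, $g$ is either unramified at $p$ or special (Steinberg up to unramified twist) at $p$, so the $p$-part of its level divides $p$. Combining, the level of $g$ divides $N^2p$.

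I expect the main obstacle to be the careful verification that the modularity lifting theorem genuinely applies in this ordinary, possibly non-de-Rham-at-$p$, weight-$2$ situation and yields an honest newform of the stated level (rather than merely a Hida-family point); the conductor bookkeeping at the primes dividing $Np$ is routine by comparison.
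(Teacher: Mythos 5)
Your proposal is correct and follows essentially the same route as the paper: the ordinarity from Lemma \ref{ordrhotilde} together with the hypothesis that $\rho|_{G_K}$ is absolutely irreducible is fed into an ordinary modularity lifting theorem (the paper cites Diamond's refinement of Wiles, Theorem 5.3 of \cite{Diamond96}, where you cite Skinner--Wiles), producing an ordinary weight-2 newform, followed by level bookkeeping. The only difference is in that bookkeeping: you bound the conductor exponent at every $\ell\mid N$ by $2$ uniformly via tameness of $\tilde{\rho}$ (which suffices for the stated bound $N^2p$), whereas the paper invokes Livn\'e's result for $\ell\not\equiv 1 \bmod p$ and uses the unipotency assumption to get the sharper bound $\ell$ when $\ell\equiv 1 \bmod p$.
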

\begin{proof} By Lemma \ref{ordrhotilde} we have that $\tilde{\rho}|_{D_p} \cong \bmat \phi_{\beta}^{-1}\epsilon & * \\ 0 & \phi_{\beta}\emat$, i.e., $\tilde{\rho}$ is an ordinary deformation of $\rho$. In particular, its Hodge-Tate weights are 1 and 0. Furthermore, the assumption that $\rho|_{G_K}$ be absolutely irreducible (with $K$ as in Theorem \ref{SK case}) guarantees that  $\tilde{\rho}$ is modular by some ordinary newform $g$ of weight 2 by a generalization of a theorem of Wiles due to Diamond - see Theorem 5.3 in \cite{Diamond96}.  The $p$-part of the level of $g$ is $p$ or 1 (see e.g., Lemma 3.26 in \cite{DDT}). For primes $\ell \mid N$ such that $\ell \not \equiv 1 \mod{p}$  the discussion in section 10.2 of \cite{BergerKlosin13} explains how a result of Livne implies that the level of $g$ is at most $\ell^2$.  If $\ell \mid N$ and $\ell \equiv 1 $ (mod $p$), then the level is, in fact, at most $\ell$ due to our unipotency assumption.  For this note that if $V$ denotes the space corresponding to $\tilde \rho$ this means that $V^{I_\ell}$ is 1-dimensional. As we are also assuming that the residual reduction $V_{\rho}^{I_{\ell}}$ is 1-dimensional and that $\rho$ has Artin conductor $\ell$, the Artin conductors of $\rho$ and $\tilde \rho$ agree and equal $\ell$ (as their valuations are given by $\dim V -\dim V^{I_{\ell}}+{\rm sw}(\tilde \rho)$  and $\dim V_{\rho} -\dim V_{\rho}^{I_{\ell}}+{\rm sw}(\rho)$, respectively,  and ${\rm sw}(\rho)={\rm sw}(\tilde \rho)$ by Serre). 
\end{proof}
\begin{rem} \begin{enumerate} \item The reader may note that if no $g$ as in the statement of Theorem \ref{SK case} exists then Lemma \ref{l5.6} already gives a contradiction to the assumption that $T$ is of Saito-Kurokawa type. \item Similar analyses of reducibility ideals for families approximating
holomorphic paramodular Saito-Kurokawa lifts were carried out in \cite{SkinnerUrban06}
and \cite{BergerBetina19}  in characteristic zero (necessarily under different
assumptions, in particular for $L(g, 1) = 0$). In the following we present
arguments working  in characteristic $p$. However, it is possible that a characteristic zero approach would also yield our result.
 \end{enumerate} \end{rem}

Write $V_g$ for the representation space of $\rho_g$ and let $V_g^+\subset V_g$ be the one-dimensional subspace on which $I_p$ acts via $\epsilon$. Let $T_g \subset V_g$ be any $G_{\Sigma}$-stable lattice in $V_g$.   
The following Lemma follows from the fact that any two $G_{\Sigma}$-stable lattices are homothetic.
\begin{lemma} \label{lattice isom} Let $\tau: G_{\Sigma} \to \GL_2(E)$ be residually irreducible. Let $\Lambda, \Lambda'$ be two $G_{\Sigma}$-stable lattices in the representation space of $\tau$. Then $\tau_{\Lambda} \cong \tau_{\Lambda'}$ (over $\Oo$). In other words, $\Lambda$ and $\Lambda'$ are isomorphic as $\Oo[G_{\Sigma}]$-modules, i.e., there exists $M \in \GL_2(\Oo)$ such that $\tau_{\Lambda'} = M \tau_{\Lambda} M^{-1}$. 
\end{lemma}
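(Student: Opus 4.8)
The plan is to prove Lemma~\ref{lattice isom} by reducing the comparison of two arbitrary $G_\Sigma$-stable lattices to the assertion that, under residual irreducibility, any such lattice is unique up to scaling by a power of $\varpi$. First I would recall the standard fact that for a representation $\tau\colon G_\Sigma\to \GL_2(E)$ with $\ov\tau$ (absolutely) irreducible, any two $G_\Sigma$-stable $\Oo$-lattices $\Lambda,\Lambda'$ in the representation space $V$ are \emph{homothetic}, i.e.\ $\Lambda' = \varpi^m\Lambda$ for some $m\in\bfZ$. To see this one normalizes so that $\Lambda'\subseteq\Lambda$ but $\Lambda'\not\subseteq\varpi\Lambda$, and considers the nonzero $\bfF[G_\Sigma]$-submodule given by the image of $\Lambda'$ in $\Lambda/\varpi\Lambda$; by residual irreducibility this image is all of $\Lambda/\varpi\Lambda$, so $\Lambda' + \varpi\Lambda = \Lambda$, and Nakayama's lemma then forces $\Lambda'=\Lambda$.

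Granting homothety, the proof concludes quickly: write $\Lambda' = \varpi^m\Lambda$. Scaling a lattice by $\varpi^m$ does not change the matrix of the $G_\Sigma$-action with respect to a basis obtained by scaling a basis of $\Lambda$ by $\varpi^m$, so $\tau_{\Lambda'}$ is literally equal to $\tau_\Lambda$ after this change of basis. Concretely, if $\mB=\{e_1,e_2\}$ is an $\Oo$-basis of $\Lambda$ giving $\tau_\Lambda$, then $\{\varpi^m e_1,\varpi^m e_2\}$ is an $\Oo$-basis of $\Lambda'$, and the matrix of $g\in G_\Sigma$ in this basis is the same as in $\mB$; hence $\tau_{\Lambda'}=\tau_\Lambda$ as maps into $\GL_2(\Oo)$, and in particular they are isomorphic over $\Oo$ with $M$ the change-of-basis matrix between $\mB$ and an arbitrary chosen basis of $\Lambda'$, which lies in $\GL_2(\Oo)$ since both are $\Oo$-bases of homothetic lattices. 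This gives the displayed conclusion $\tau_{\Lambda'}=M\tau_\Lambda M^{-1}$ with $M\in\GL_2(\Oo)$.

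The only real content is the homothety statement, and the main (mild) obstacle is making sure the hypotheses are used correctly: residual irreducibility of $\tau$ is exactly what is needed, and one should note that since $\Oo$ is a complete DVR with $E$ its fraction field, "$G_\Sigma$-stable lattice" behaves well (finitely generated, torsion-free, hence free of rank $2$). I would also remark that absolute irreducibility is not strictly needed here—plain irreducibility over $\bfF$ of $\ov\tau$ suffices for the Nakayama argument—so in our application, where $\rho=\ov{\tilde\rho}$ is assumed absolutely irreducible, the hypothesis is certainly met. No deformation theory or Galois cohomology is required; this is purely a statement about lattices over a DVR, which is why the lemma can be stated as an immediate consequence of homothety.
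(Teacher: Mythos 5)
Your proof is correct and follows the same route as the paper, which simply invokes the fact that residual irreducibility forces any two $G_{\Sigma}$-stable lattices to be homothetic; your Nakayama argument is the standard proof of that fact, and the passage from homothety to conjugation by some $M\in\GL_2(\Oo)$ is carried out correctly.
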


In particular, the action of $G_{\Sigma}$ on $T_g/\varpi T_g$ (which we denote by $\ov{\rho}_{g, T_g}$)  is isomorphic to $\ov{\rho}_g\cong \rho$ as the latter representation is irreducible. Furthermore, by Lemma \ref{lattice isom} we get that the isomorphism class of the restriction of the action of $G_{\Sigma}$ to $I_p$ on $T_g$ is independent of the choice of $T_g$ inside the representation space of $\rho_g$. More precisely, we have the following result.

\begin{lemma} \label{indep1} One has $\rho_{g, T_g}|_{I_p} \cong_{\Oo} \bmat \epsilon & * \\ & 1\emat.$ 
\end{lemma}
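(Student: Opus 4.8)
The plan is to deduce the statement from the preceding setup, principally Lemma \ref{indep1}'s hypotheses: $\rho_g$ is ordinary at $p$ (this is part of the construction of $g$ in Lemma \ref{l5.6}, or more directly follows from Lemma \ref{ordrhotilde} together with $\tilde\rho = \rho_g$), so there is a $G_\Sigma$-stable line in $V_g$, or rather a $D_p$-stable filtration, with $I_p$ acting by $\epsilon$ on the sub and trivially on the quotient; and $\ov\rho_g \cong \rho$ is absolutely irreducible, so by Lemma \ref{lattice isom} all $G_\Sigma$-stable lattices in $V_g$ are homothetic, hence the restriction to $D_p$ (in particular to $I_p$) of the integral representation $\rho_{g,T_g}$ is independent of the choice of $T_g$.

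First I would make precise the shape of $\rho_g|_{D_p}$: by ordinarity (Lemma \ref{ordrhotilde}) we have $\rho_g|_{D_p} \cong \bsmat \phi_\beta^{-1}\epsilon & * \\ 0 & \phi_\beta \esmat$ over $E$, and since $\phi_\beta$ is unramified this gives $\rho_g|_{I_p} \cong \bsmat \epsilon & * \\ 0 & 1 \esmat$ over $E$; the subspace $V_g^+$ on which $I_p$ acts by $\epsilon$ is exactly the ordinary line. I would then choose an $E$-basis adapted to this filtration and rescale it to lie in $T_g$ (possible after multiplying the second basis vector by a suitable power of $\varpi$, as in the proof of Lemma \ref{lattice1}); with respect to such a basis $\rho_{g,T_g}|_{I_p}$ is visibly upper-triangular with diagonal $\epsilon, 1$, i.e. of the form $\bsmat \epsilon & * \\ & 1 \esmat$ over $\Oo$. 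Since by Lemma \ref{lattice isom} the $\Oo[I_p]$-isomorphism class of $\rho_{g,T_g}|_{I_p}$ does not depend on $T_g$, the claimed shape holds for every $G_\Sigma$-stable lattice $T_g$, which is the assertion.

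The one point requiring a little care — and the main (minor) obstacle — is ensuring that the off-diagonal entry of the integral matrix really can be taken to have entries in $\Oo$ (not just in $E$) simultaneously with the diagonal being exactly $\epsilon$ and $1$: this is exactly the rescaling argument, and one should note that the resulting lattice need not be $T_g$ itself but is homothetic to it, so Lemma \ref{lattice isom} is what lets us transport the conclusion back to an arbitrary $T_g$. I do not expect any genuine difficulty; the proof is essentially an unwinding of ordinarity plus the homothety of lattices in a residually irreducible representation.

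\begin{proof} Since $\tilde\rho = \rho_g$, Lemma \ref{ordrhotilde} gives $\rho_g|_{D_p} \cong \bmat \phi_{\beta}^{-1}\epsilon & * \\ 0 & \phi_{\beta}\emat$ over $E$, and as $\phi_{\beta}$ is unramified, restricting to $I_p$ yields $\rho_g|_{I_p}\cong \bmat \epsilon & * \\ 0 & 1\emat$ over $E$, the sub-line being $V_g^+$. Pick an $E$-basis $\{v_1, v_2\}$ of $V_g$ with $v_1$ spanning $V_g^+$ and $v_2$ mapping to a basis of the quotient. After replacing $v_2$ by $\varpi^s v_2$ for $s \gg 0$ we may assume $v_1, v_2 \in T_g$ (cf. the argument in Lemma \ref{lattice1}); let $\Lambda$ be the $\Oo$-span of $\{v_1, v_2\}$, a $G_{\Sigma}$-stable $\Oo$-lattice. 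With respect to this basis $\rho_{g,\Lambda}|_{I_p}$ is upper triangular with diagonal entries $\epsilon, 1$, so $\rho_{g,\Lambda}|_{I_p} \cong_{\Oo} \bmat \epsilon & * \\ & 1\emat$. Since $\ov{\rho}_g \cong \rho$ is absolutely irreducible, Lemma \ref{lattice isom} shows that for any $G_{\Sigma}$-stable lattice $T_g$ the $\Oo[G_{\Sigma}]$-modules $T_g$ and $\Lambda$ are isomorphic, hence $\rho_{g, T_g}|_{I_p} \cong_{\Oo} \rho_{g,\Lambda}|_{I_p} \cong_{\Oo} \bmat \epsilon & * \\ & 1\emat$, as claimed. \end{proof}
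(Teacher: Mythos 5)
Your overall strategy is the same as the paper's: by Lemma \ref{lattice isom} it suffices to exhibit \emph{one} $G_{\Sigma}$-stable lattice on which the $I_p$-action has the integral shape $\bmat \epsilon & * \\ & 1\emat$ (the paper gets such a lattice by citing the proof of Proposition 6 of \cite{Ghate05}). The problem is in your construction of that lattice. You take $v_1$ spanning $V_g^+$ and $v_2$ lifting a basis of the quotient, scale them into $T_g$, and then assert that their $\Oo$-span $\Lambda$ is ``a $G_{\Sigma}$-stable $\Oo$-lattice.'' That assertion is unjustified and in general false: $G_{\Sigma}$-stability is a global condition, and choosing vectors adapted to the \emph{local} filtration at $p$ and pushing them into $T_g$ by powers of $\varpi$ gives only a sublattice of $T_g$, which need not be preserved by elements of $G_{\Sigma}$ outside $D_p$ (for $g\in G_{\Sigma}$, $gv_1$ lies in $T_g$ but its $v_2$-coordinate can be non-integral with respect to your rescaled basis). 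Without $G_{\Sigma}$-stability, $\rho_{g,\Lambda}$ is not an integral representation of $G_{\Sigma}$ at all, and Lemma \ref{lattice isom} — whose proof rests on residual irreducibility of the \emph{global} representation and compares $G_{\Sigma}$-stable lattices — cannot be invoked to transport the conclusion to $T_g$. (A smaller point: even $I_p$-stability of your $\Lambda$, i.e.\ integrality of the off-diagonal entry, needs an argument — boundedness of the cocycle on the compact group $I_p$ and a further choice of $s$ — which your rescaling ``to get $v_1,v_2\in T_g$'' does not by itself provide.)

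The gap is repairable, and in fact one can avoid Lemma \ref{lattice isom} entirely: set $T_g^+:=T_g\cap V_g^+$. This is an $I_p$-stable rank-one $\Oo$-submodule of $T_g$, and it is saturated (if $\varpi v\in T_g\cap V_g^+$ with $v\in T_g$, then $v\in V_g^+$ since $V_g^+$ is an $E$-subspace), hence an $\Oo$-direct summand of $T_g$. Choosing a basis $\{e_1,e_2\}$ of $T_g$ with $e_1$ generating $T_g^+$, the $I_p$-action on $T_g$ is then automatically of the form $\bmat \epsilon & * \\ & 1\emat$ with $*$ valued in $\Oo$, because $ge_2\in T_g$ and $I_p$ acts by $\epsilon$ on $V_g^+$ and trivially on $V_g/V_g^+$. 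This is essentially the content of the argument the paper imports from \cite{Ghate05}; either that repair or the paper's citation is needed in place of your span-of-two-vectors construction.
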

\begin{proof} By Lemma \ref{lattice isom} it is enough to show that there exists a $G_{\Sigma}$-stable lattice $\Lambda_0$ such that $\rho_{g, \Lambda_0}|_{I_p} = \bmat \epsilon &x \\ & 1 \emat.$ For this see proof of Proposition 6 of \cite{Ghate05}. 
\end{proof}

Write $W_g$ for $V_g/T_g \cong \rho_{g,T_g} \otimes E/\Oo$. By Lemma \ref{indep1} we know that there exist rank one free $\Oo$-submodules $T_g^+$ and $T_g^-$ of $T_g$ such that $T_g = T_g^+ \oplus T_g^-$ as $\Oo$-modules and that if $e_1\in T_g^+$ and $e_2\in T_g^-$ form a basis of $T_g$ then in the basis $\{e_1, e_2\}$ one has $\rho_{g, T_g}|_{I_p} = \bmat\epsilon & x \\ & 1 \emat $ with $x \not \equiv 0$ mod $\varpi$ (as $\ov{\rho}_g|_{I_p}=\rho|_{I_p}$ is non-split).   One clearly has $T_g^+\otimes_{\Oo}E = V_g^+$. Set $W_g^+:= V_g^+/T_g^+ \cong T_g^+\otimes_{\Oo}E/\Oo$. 

Following \cite{SkinnerUrban06} 3.1.3 we define Greenberg-style Selmer groups $${\rm Sel}_i: = \ker\left(H^1(G_{\Sigma} , W\otimes\Psi_i^{-1}) \xrightarrow{{\rm res}_{I_p}} H^1(I_p, (W_g/W_g^+) \otimes \Psi_i^{-1})\right), i=1, 2.$$

\begin{lemma} \label{Psis} One has $\Psi_1=1$ and $\Psi_2=\epsilon$. 
\end{lemma}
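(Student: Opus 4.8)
The plan is to first determine $\Psi_1$ and $\Psi_2$ on the decomposition group $D_p$, using the self-duality of the family together with Lemma~\ref{HT}, and then to prove that both $\Psi_1$ and $\epsilon^{-1}\Psi_2$ are unramified at every finite prime, hence trivial. First I would insert $T=\Psi_1+\tr\tilde\rho+\Psi_2$ into the relation $T=T\circ\tau$, where $\tau(g)=\Phi(g)g^{-1}$ with $\Phi|_{D_p}=\epsilon^{\kappa_4(x_0)}=\epsilon$ (using $\kappa_4(x_0)=1$, which is Lemma~\ref{HT}(ii)). For a character $\Psi$ one has $\Psi\circ\tau=\Phi\Psi^{-1}$, while for the $2$-dimensional $\tilde\rho$ one has $\tr(\tilde\rho\circ\tau)=\tr\big(\tilde\rho\otimes\Phi(\det\tilde\rho)^{-1}\big)$. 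Comparing the (unique) decompositions of $T$ and $T\circ\tau$ into irreducible pseudo-representations then forces $\det\tilde\rho=\Phi$ and $\{\Psi_1,\Psi_2\}=\{\Phi\Psi_1^{-1},\Phi\Psi_2^{-1}\}$. The latter leaves two possibilities: either $\Psi_1^2=\Phi=\Psi_2^2$, or $\Psi_1\Psi_2=\Phi$. The first is impossible, since it would give $\ov{\Phi}=\ov{\Psi}_1^2=1$, whereas $\ov{\Phi}|_{D_p}=\chi$ is non-trivial because $p$ is odd; hence $\Psi_1\Psi_2=\Phi$.

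Next I would restrict to $D_p$. As $\Psi_1$ is a direct summand of $T$, the character $\Psi_1|_{D_p}$ occurs in the decomposition of $T|_{D_p}$ into irreducible pseudo-representations, so Lemma~\ref{HT}(iii) gives $\Psi_1|_{I_p}\in\{1,\epsilon\}$; since $\ov{\Psi}_1=1\neq\chi$ this forces $\Psi_1|_{I_p}=1$, i.e.\ $\Psi_1$ is unramified at $p$. From $\Psi_2=\Phi\Psi_1^{-1}$ and $\Phi|_{D_p}=\epsilon|_{D_p}$ we then get that $\Psi_2|_{D_p}$ is $\epsilon|_{D_p}$ times an unramified character, so $\epsilon^{-1}\Psi_2$ is likewise unramified at $p$; note also $\ov{\epsilon^{-1}\Psi_2}=\chi^{-1}\ov{\Psi}_2=1$.

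It remains to show that $\Psi_1$ and $\epsilon^{-1}\Psi_2$ are unramified at every prime $\ell\mid N$. Fix such an $\ell$ (so $\ell\neq p$). If $\ell\not\equiv 1\pmod p$, then $\Psi_1|_{I_\ell}$ is a continuous character with pro-$p$ image (as $\ov{\Psi}_1=1$); by local class field theory $I_\ell^{\rm ab}$ corresponds to $\bfZ_\ell^\times\cong\mu_{\ell-1}\times(1+\ell\bfZ_\ell)$, whose maximal pro-$p$ quotient is trivial because $1+\ell\bfZ_\ell$ is pro-$\ell$ and $p\nmid\ell-1$, so $\Psi_1|_{I_\ell}=1$; the same argument (together with $\epsilon|_{I_\ell}=1$ and $\ov{\epsilon^{-1}\Psi_2}=1$) gives $\epsilon^{-1}\Psi_2|_{I_\ell}=\Psi_2|_{I_\ell}=1$. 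If $\ell\equiv 1\pmod p$, then by assumption (6) $\sigma_k|_{I_\ell}$ is unipotent for every $k\in\mS$, so for $g\in I_\ell$ the matrix $\sigma_k(g^m)=\sigma_k(g)^m$ is unipotent and $\tr\sigma_k(g^m)=4$ for all $m\geq 1$; since $\tr\sigma_k\equiv T\bmod\varpi^{n_k}$ with $n_k\to\infty$, passing to the limit yields $\Psi_1(g)^m+\Psi_2(g)^m+\tr\tilde\rho(g^m)=T(g^m)=4$ for all $m\geq 1$, and Newton's identities then force the four numbers $\Psi_1(g)$, $\Psi_2(g)$ and the two eigenvalues of $\tilde\rho(g)$ all to equal $1$; in particular $\Psi_1|_{I_\ell}=\Psi_2|_{I_\ell}=1$, hence also $\epsilon^{-1}\Psi_2|_{I_\ell}=1$.

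Combining the last two steps, $\Psi_1$ and $\epsilon^{-1}\Psi_2$ are unramified at $p$ and at all $\ell\mid N$, and they are unramified outside $\Sigma$ by construction, so they are unramified at every finite prime; since $\bfQ$ has no non-trivial abelian extension unramified at all finite primes (Minkowski), both are trivial, giving $\Psi_1=1$ and $\Psi_2=\epsilon$. I expect the steps requiring the most care to be the self-duality bookkeeping that produces $\Psi_1\Psi_2=\Phi$ while ruling out the ``square-root'' option $\Psi_i^2=\Phi$, and the passage-to-the-limit argument (via convergence of traces) that kills the ramification of $\Psi_1,\Psi_2$ at the primes $\ell\equiv 1\pmod p$ dividing $N$.
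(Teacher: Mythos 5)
Your proof is correct and follows essentially the same route as the paper's: Lemma \ref{HT}(iii) at $p$, assumption (6) to kill ramification away from $p$ (the paper asserts this in one line; your pro-$p$ argument for $\ell\not\equiv 1 \pmod p$ and trace/Newton-identity limit argument for $\ell\equiv 1\pmod p$ are exactly the missing details), and Minkowski to conclude triviality of everywhere-unramified characters. The only quibble is that the self-duality matching forces $\Phi(\det\tilde{\rho})^{-1}$ to be a quadratic self-twist character of $\tilde{\rho}$ rather than literally $\det\tilde{\rho}=\Phi$, but you never use that claim, and the fact you do use, $\Psi_2|_{I_p}=\epsilon|_{I_p}$, also follows directly from Lemma \ref{HT}(iii) applied to $\Psi_2$ since $\ov{\Psi}_2=\chi$ is ramified at $p$, so the self-duality bookkeeping could be dropped entirely.
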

\begin{proof}  By assumption (6) we know that $\Psi_1$ and $\Psi_2$ are unramified away from $p$. Since $\ov{\Psi}_1=1$ and $\ov{\Psi}_2=\chi$ we know by Lemma \ref{HT}(iii) that $\Psi_1$ is unramified everywhere, hence trivial. As $\Psi_1 \Psi_2=\epsilon$ we get $\Psi_2=\epsilon$.
\end{proof}

\begin{prop} \label{Sel finite}  The groups ${\rm Sel}_i$ are finite for $i=1,2$. \end{prop}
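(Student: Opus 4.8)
The plan is as follows. First, using Lemma~\ref{l5.6} and Lemma~\ref{Psis}, I would rewrite the two groups concretely: since $\tilde\rho=\rho_g$ for an ordinary newform $g$ of weight $2$ and level dividing $N^2p$ with $a_\ell(g)\equiv\tr\rho(\Frob_\ell)\pmod\varpi$ for $\ell\nmid Np$, and since $\Psi_1=1$ and $\Psi_2=\epsilon$, the group $\mathrm{Sel}_1$ is the Greenberg--Selmer group (in the sense of \cite{SkinnerUrban06}) of $W_g$, and $\mathrm{Sel}_2$ is the one of the Tate twist $W_g\otimes\epsilon^{-1}$, both with the ordinary local condition at $p$ given by the line $W_g^+$ on which inertia acts through $\epsilon$. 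The hypothesis that $\rho|_{G_K}$ is absolutely irreducible, with $K\subset\bfQ(\mu_p)$, guarantees that $\rho_g|_{G_{\bfQ(\mu_{p^\infty})}}$ is absolutely irreducible, which is the large-image input needed for Kato's Euler system below.

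For $\mathrm{Sel}_1$ I would argue directly: since $g$ has weight $2$ and is $p$-ordinary and $p$-distinguished ($\bar\beta\neq1$), the Greenberg condition at $p$ coincides with the Bloch--Kato/classical one, so $\mathrm{Sel}_1$ agrees, up to a finite group, with the $p^\infty$-Selmer group of the modular abelian variety attached to $g$; Kato's Euler system bound then shows that $L(g,1)\neq0$ forces this Selmer group to be finite, and $L(g,1)\neq0$ is part of our hypothesis. For $\mathrm{Sel}_2$ I would pass to the cyclotomic tower: let $\Lambda=\Oo[[\Gamma]]$ with $\Gamma=\Gal(\bfQ_\infty/\bfQ)$ and let $X_\infty$ be the Pontryagin dual of the corresponding Greenberg--Selmer group of $\rho_g$ over $\bfQ_\infty$ on the appropriate (namely $\omega^{-1}$-) branch. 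By Kato's divisibility in the cyclotomic main conjecture for $g$, $X_\infty$ is $\Lambda$-torsion and a generator of $\mathrm{char}_\Lambda(X_\infty)$ divides, up to a power of $p$, the Mazur--Swinnerton-Dyer $p$-adic $L$-function $\mathcal{L}_p(g)$. Under the normalisation of Section~2 of \cite{BergerKlosinAJMaccepted}, the value of $\mathcal{L}_p(g)$ at the height-one prime of $\Lambda$ that governs $\mathrm{Sel}_2$ is $L_p(g,2)=L_p^{\mathrm{an}}(g,\omega^{-1},T=p)$, which is non-zero by hypothesis; hence $\mathrm{char}_\Lambda(X_\infty)$ is not contained in that prime, so the corresponding specialisation of $X_\infty$ is finite, and Greenberg's control theorem (valid here because $\rho_g$ has large image) identifies it with $\mathrm{Sel}_2$ up to a finite group. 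Therefore $\mathrm{Sel}_2$ is finite as well. Note that only Kato's one-sided divisibility is used; the full main conjecture of \cite{SkinnerUrban06}, whose hypotheses could be delicate for level $N^2p$, is not needed.

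The step I expect to be the real work is not any single deep input — Kato's Euler system and divisibility, Greenberg's control theorem, and the construction of $\mathcal{L}_p(g)$ are all available — but the bookkeeping that makes the reductions above legitimate: checking that the Greenberg condition in the definition of $\mathrm{Sel}_i$ is exactly the one controlled by the main conjecture; identifying the correct branch ($\omega^{-1}$) and the correct cyclotomic specialisation point ($T=p$) for $\mathrm{Sel}_2$, i.e.\ understanding why $\mathrm{Sel}_2$ is governed by the \emph{non-critical} value $L_p(g,2)$ rather than a classical $L$-value (because $2$ lies outside the critical range $\{1\}$ in weight $2$); ensuring Kato's results apply at level $N^2p$; confirming that $\mathrm{Sel}_1$ does agree with a classical Selmer group up to a finite group; and checking that the finite discrepancies introduced by the control theorem do not affect finiteness. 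I would carry these out following the treatments in \cite{BergerKlosinAJMaccepted} and \cite{SkinnerUrban06}.
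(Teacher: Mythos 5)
Your strategy is essentially the paper's: both reduce finiteness of ${\rm Sel}_1$ and ${\rm Sel}_2$ to Iwasawa-theoretic bounds by the relevant $L$-values, invoking the hypotheses $L(g,1)\neq 0$ and $L_p(g,2)\neq 0$. The differences are only in the quoted inputs: for ${\rm Sel}_1$ the paper does not pass through the Selmer group of the modular abelian variety and Kato's Euler system, but cites \cite{SkinnerUrban14} Theorem 3.36 directly, which bounds $\#{\rm Sel}_1$ by $\#\Oo/L^N(g,1)$ (the $p$-distinguishedness giving $a_p(g)-1\in\Oo^\times$, as you also note); for ${\rm Sel}_2$ it follows the argument of \cite{BergerKlosinAJMaccepted} Proposition 2.10 (Main Conjecture plus the control theorem) to get $\#{\rm Sel}_2\leq \#\Oo/L^N_p(g,2)$. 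Your observation that Kato's one-sided divisibility suffices is a legitimate variant and arguably needs weaker hypotheses, at the cost of verifying the large-image input you mention.

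One concrete point is buried in your ``bookkeeping'' paragraph but deserves to be made explicit, because it is where the stated hypothesis actually has to be matched to the bound: the groups ${\rm Sel}_i$ carry no local conditions at the primes $\ell\mid N$, so the Iwasawa-theoretic bounds are in terms of the \emph{imprimitive} values $L^N(g,1)$ and $L^N_p(g,2)$, i.e.\ with the Euler factors at $\ell\mid N$ removed, whereas the hypothesis of Theorem \ref{SK case} concerns $L(g,1)$ and $L_p(g,2)$. The paper bridges this by noting that $a_\ell(g)\in\{0,\pm 1\}$ for $\ell$ dividing the level (\cite{Miyake89} Theorem 4.6.17), so $1-a_\ell(g)\ell^{-i}\neq 0$ for $i=1,2$ and the dropped Euler factors cannot vanish; without some such check, non-vanishing of the primitive values does not by itself force non-vanishing of the quantities that actually bound the Selmer groups. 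Your outline should include this step (or an equivalent one) for the argument to close.
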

\begin{proof} Recall $$L(g, s) =\prod_{\ell \nmid N} (1-a_{\ell}(g)\ell^{-s} + \ell^{-2s+1})^{-1} \prod_{\ell\mid N}(1-a_{\ell}(g)\ell^{-s})^{-1}\quad \textup{for ${\rm Re}(s)\gg 0.$}$$  Let $L^{N}(g,s)$ be defined in the same way but omitting the Euler factors at primes $\ell \mid N$. By Theorem 4.6.17 in \cite{Miyake89} we get that the $\ell$-eigenvalue $a_{\ell}(g)$ of $g$ equals $0$ or $\pm 1$,  hence $1-a_{\ell}(g)\ell^{-i} \neq 0$ for $i=1,2$. This implies that $L(g,i) \neq 0$ if and only if $L^{N}(g, i) \neq 0$ for $i \in \{1,2\}$.   By \cite{SkinnerUrban14}  Theorem 3.36 we have $\#{\rm Sel}_1 \leq \#\Oo/L^N(g,1)$. In the notation of \cite{SkinnerUrban14} we are in the case $m=0$ and $a_p(g)-1 \in \Oo^{\times}$ due to our $p$-distinguishedness assumption on $\rho$.   For $i=2$  we use the argument from the proof of \cite{BergerKlosinAJMaccepted} Proposition 2.10. In particular, by the Main Conjecture of Iwasawa theory and the control theorem (\cite{BergerKlosinAJMaccepted} Theorem 2.11) we deduce $$\#{\rm Sel}_2 \leq\# \Oo/L^N_p(g,2).$$  \end{proof}

As the representations $\sigma_{k, \Lambda}$ are valued in $\Oo_k$, rather than $\Oo$ we need to introduce some auxiliary Selmer groups. For $k \in \mS$ and $r \in \bfZ_+$
we set $${\rm Sel}_{i,k,r}: = \ker\left(H^1(G_{\Sigma} , T_{g,k,r}\otimes\Psi_i^{-1}) \xrightarrow{{\rm res}_{p}} H^1(I_p, (T_{g,k, r}/T_{g,k, r}^+) \otimes \Psi_i^{-1})\right), i=1, 2,$$ where $T^?_{g, k,r}=T^?_g\otimes \Oo_k/\varpi^r\Oo_k$ for $?\in \{+, \emptyset\}$.
Note that for $k=2$ we have an inclusion  ${\rm Sel}_{i,2,r} \hookrightarrow {\rm Sel}_i[\varpi^{r}]$.
\begin{prop}\label{InSel}  If $\ov{x}_k\in \{\ov{d}_k, \ov{f}_k\}$, then $[x_k] \in {\rm Sel}_{1, k, n_k}$. If $\ov{x}_k\in \{\ov{a}_k, \ov{c}_k\}$, then $[x_k] \in {\rm Sel}_{2, k, n_k}$. In either case $\varpi^{n_k-1}[x_k] \neq 0$. 
\end{prop}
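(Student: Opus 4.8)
The plan is to read off the relevant cohomology class directly from the block‑triangular reduction of $\sigma_k$ provided by Lemma~\ref{LemmaUrban}, to decide which of the two Selmer groups it lies in by comparing diagonal characters, and then to verify the local condition at $p$ using the Siegel‑ordinarity of $\sigma_k$. The assertion $\varpi^{n_k-1}[x_k]\neq0$ needs no separate argument: it is part of the statement of Lemma~\ref{LemmaUrban}, recording that the residual subquotient $\bmat\ov{\rho}_1 & \ov{x}_k\\ & \ov{\rho}_2\emat$ is not semisimple.

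First I would fix notation. By Lemmas~\ref{LemmaUrban}, \ref{l5.6} and \ref{Psis}, for the lattice of Lemma~\ref{inde} one has $\sigma_k\equiv\bmat\rho_3 & * & *\\ & \rho_1 & x_k\\ & & \rho_2\emat\pmod{\varpi^{n_k}}$, where $(\rho_1,\rho_2,\rho_3)$ is the permutation of $(\rho_g,\Psi_1,\Psi_2)=(\rho_g,1,\epsilon)$ dictated by $\ov{x}_k$, the filtration $F_1\subset F_2\subset\sigma_k$ has $F_1=\rho_3$, $F_2/F_1=\rho_1$ and $\sigma_k/F_2=\rho_2$, and $[x_k]\in H^1(G_\Sigma,\Hom(\rho_2,\rho_1)\otimes\Oo_k/\varpi^{n_k})$ is the extension class of the quotient $Q:=\sigma_k/F_1$. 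Since $\det\rho_g=\epsilon$ (so $\rho_g^\vee\cong\rho_g\otimes\epsilon^{-1}$), I would then identify $\Hom(\rho_2,\rho_1)\otimes\Oo_k/\varpi^{n_k}$, in each of the four cases, with $T_{g,k,n_k}\otimes\Psi_1^{-1}$ if $\ov{x}_k\in\{\ov{d}_k,\ov{f}_k\}$ and with $T_{g,k,n_k}\otimes\Psi_2^{-1}$ if $\ov{x}_k\in\{\ov{a}_k,\ov{c}_k\}$; this elementary bookkeeping places $[x_k]$ in the ambient group $H^1(G_\Sigma,T_{g,k,n_k}\otimes\Psi_j^{-1})$. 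The real point is then the vanishing of the image of ${\rm res}_{I_p}[x_k]$ in $H^1(I_p,(T_{g,k,n_k}/T_{g,k,n_k}^+)\otimes\Psi_j^{-1})$.

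For this I would first use the self‑duality $\sigma_k^\vee\cong\sigma_k(3-2k)$ — which reverses the Lemma~\ref{LemmaUrban} filtration and, via $\rho_g^\vee\cong\rho_g(-1)$, exchanges the $\ov{d}_k$‑ and $\ov{c}_k$‑subquotients with the $\ov{f}_k$‑ and $\ov{a}_k$‑ones compatibly with the Greenberg local conditions — to reduce to $\ov{x}_k\in\{\ov{c}_k,\ov{d}_k\}$, where $\rho_2\in\{\Psi_1,\Psi_2\}$ is one‑dimensional and $\Psi_j=\rho_2$ (one could instead treat these cases directly, using the $D_p$‑stable sub‑line $L$ below in place of $\sigma_k/M$). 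Siegel‑ordinarity gives a $D_p$‑stable filtration $0\subset L\subset M\subset\sigma_k$ over $\Oo_k$ with $\rk L=1$, $\rk M=3$ and $\sigma_k/M$ unramified. Reducing mod $\varpi^{n_k}$, I would argue: $\sigma_k/M$ and $\sigma_k/F_2=\rho_2$ have distinct reductions mod $\varpi$ (as $\sigma_k/M$ is unramified with residual Frobenius $\ov{\beta}_k\neq1$, while $\rho_2\in\{1,\epsilon\}$ reduces to $1$ or $\chi$, and $p$ is odd), so $M$ and $F_2$ have distinct reductions, $M+F_2=\sigma_k$ by Nakayama, and hence $N:=M\cap F_2$ is a rank‑two $D_p$‑direct summand with $\sigma_k/N\cong(\sigma_k/M)\oplus(\sigma_k/F_2)$ as $D_p$‑modules. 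Next, $\Hom_{D_p}(\rho_3,\sigma_k/M)=0$ (because $\rho_3\in\{1,\epsilon\}$ while $\sigma_k/M$ is unramified with residual Frobenius $\neq1$ and $\ov{\epsilon}|_{I_p}$ is nontrivial), so $F_1\subseteq M$, whence $F_1\subseteq N$ and $N/F_1$ is a rank‑one $D_p$‑stable direct summand of $F_2/F_1=\rho_g\bmod\varpi^{n_k}$; its reduction mod $\varpi$ is the unique $D_p$‑stable line of $\ov{\rho}_g\cong\rho$ (unique since $\rho|_{I_p}$ is non‑split), and comparing characters forces $N/F_1=T_g^+\bmod\varpi^{n_k}$. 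Quotienting $Q$ by $T_g^+$ therefore yields $\sigma_k/N$, so the image of ${\rm res}_{D_p}[x_k]$ under pushforward along $T_{g,k,n_k}\to T_{g,k,n_k}/T_{g,k,n_k}^+$ is the class of the $D_p$‑extension $0\to(T_g/T_g^+)\otimes\Psi_j^{-1}\to(\sigma_k/N)\otimes\Psi_j^{-1}\to\rho_2\otimes\Psi_j^{-1}=1\to0$, which splits because the projection $\sigma_k/N\to\sigma_k/F_2$ admits the $D_p$‑equivariant section coming from the direct‑sum decomposition above. Restricting to $I_p$ gives the desired vanishing, i.e. $[x_k]\in{\rm Sel}_{j,k,n_k}$.

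\textbf{The main obstacle} is the last step, and within it the module theory over the non‑reduced ring $\Oo_k/\varpi^{n_k}$: showing that $M\cap F_2$ and $N/F_1$ are genuinely free direct summands of the stated rank, and that the ordinary line of $\rho_g$ remains the unique $D_p$‑stable line modulo $\varpi^{n_k}$. These all rely on the $p$‑distinguishedness hypotheses (ensuring the residual characters in play are pairwise distinct), which is exactly what makes the global filtration of Lemma~\ref{LemmaUrban} and the local ordinary filtration interlock correctly. A slicker but less self‑contained alternative would be to deduce the local condition from a general principle that Greenberg‑ordinarity is inherited by subquotients of ordinary lattices, in the spirit of Skinner--Urban.
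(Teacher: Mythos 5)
Your proposal is correct in outline but takes a genuinely different route from the paper. The paper works matrix-by-matrix: after conjugating $\sigma_k \bmod \varpi^{n_k}$ into an explicit Siegel-ordinary shape, the key local input is Lemma \ref{vanishing} (the splitting of the specific two-dimensional $D_p$-subquotients involving $a_k^1, c_k^2, d_k^2, f_k^1$), proved via the $D_p$-stable line of Lemma \ref{lattice1} and $p$-distinguishedness; the membership in $\Sel_1$ versus $\Sel_2$ is then extracted by hand, with the cases $\ov{a}_k,\ov{f}_k$ requiring the explicit computation that the essential self-duality $\psi'\colon\rho_g\to\rho_g^{\vee}(1)$ carries $T_{g,k,n_k}^+$ onto the annihilator of $T_{g,k,n_k}^+$ twisted by $\epsilon$. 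You instead argue structurally: you intersect the rank-three Siegel-ordinary submodule $M$ (unramified quotient) with the global filtration step $F_2$, identify $N/F_1$ with $T^+_{g,k,n_k}$, and read off the splitting of the pushed-forward extension from the decomposition $\sigma_k/N\cong(\sigma_k/M)\oplus(\sigma_k/F_2)$; the cases with two-dimensional quotient are handled by duality (or symmetrically with the line $L$). This buys a cleaner, less computational argument, at the cost of two points where your write-up is too quick. First, the identification $N/F_1=T^+_{g,k,n_k}$ does not follow from comparing reductions mod $\varpi$ (a rank-one direct summand of $T_{g,k,n_k}$ is not determined by its reduction); what saves you is that the quotient $F_2/N\cong\sigma_k/M$ is unramified, and $T^+$ is the \emph{unique} $D_p$-stable rank-one direct summand of $T_{g,k,n_k}$ with unramified quotient modulo $\varpi^{n_k}$ — provable from the residual non-splitness of $\rho|_{I_p}$ by evaluating at $\gamma\in I_p$ with $\epsilon(\gamma)-1$ a unit; you flag exactly this as the main obstacle, so the gap is acknowledged and fixable. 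Second, the duality reduction for $\ov{a}_k,\ov{f}_k$ needs more care than a parenthesis: one must check that the twist $\epsilon^{2k-4}$ is trivial modulo $\varpi^{n_k}$ (deducible from $\tr\sigma_k\equiv T$, but not free) and that the Greenberg conditions for $\Sel_1$ and $\Sel_2$ correspond under $\rho_g\cong\rho_g^{\vee}(1)$ — which is precisely the annihilator computation the paper carries out explicitly; your alternative of treating these cases directly with the stable line $L$ avoids the twist issue and is closer to the paper's Lemma \ref{vanishing}. With these two points filled in, your argument gives a complete and arguably more conceptual proof of the proposition, including the correct $\Sel_1$/$\Sel_2$ bookkeeping and the observation that $\varpi^{n_k-1}[x_k]\neq 0$ is already supplied by Lemma \ref{LemmaUrban}.
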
 
\begin{proof} Write $$\sigma_k = \bmat \Psi_1 & a & b \\ d & \tilde{\rho} & c \\ e & f & \Psi_2 \emat \pmod{\varpi^{n_k}}$$ as before with $a_k=\bmat a_k^1 & a_k^2\emat$, $d_k = \bmat d_k^1 & d_k^2\emat^t$, $c_k=\bmat c_k^1 & c_k^2 \emat^t$ and $f_k=\bmat f_k^1 & f_k^2\emat$. By Siegel-ordinarity we have $$\sigma_k|_{D_p} \cong_{E_k} \bmat\phi_{\beta}^{-1} \epsilon &*&*&*\\ &*&*&*\\ &*&*&*\\ &&&\phi_{\beta}\emat.$$ 
 Furthermore, by Lemma \ref{ordrhotilde} we have $\tilde{\rho}|_{D_p} = \bmat \phi_{\beta}^{-1} \epsilon & h \\ & \phi_{\beta} \emat.$  Thus in particular $$(\sigma_k|_{D_p} \pmod{\varpi^{n_k}})^{\rm ss} = \Psi_1 \oplus \Psi_2 \oplus \phi_{\beta}^{-1} \epsilon\oplus \phi_{\beta} \pmod{\varpi^{n_k}}.$$ Conjugating $\sigma_k$ by a permutation matrix we see that $$\sigma_k|_{D_p} \cong_{\Oo_k} \bmat\phi_{\beta}^{-1} \epsilon  & d_k^1 & c_k^1 & h\\ a_k^1 & \Psi_1 & b_k & a_k^2\\ f_k^1 & e_k & \Psi_2 & f_k^2\\0&d_k^2& c_k^2 & \phi_{\beta}\emat\pmod{\varpi^{n_k}}.$$ 
To complete the proof of Proposition \ref{InSel} we need several lemmas.
\begin{lemma} \label{vanishing} One has 
\begin{itemize}
\item  If $\ov{x}_k =\ov{a}_k$, then $a_k^1$ gives rise to an  extension of $D_p$-modules $\bmat \Psi_1 & a_k^1 \\ & \phi^{-1}_\beta\epsilon\emat$ mod $\varpi^{n_k}$, which splits, i.e., $[a_k^1]=0$. 
\item  If $\ov{x}_k =\ov{c}_k$, then $c_k^2$ gives rise to an extension  of $D_p$-modules $\bmat \phi_{\beta} & c_k^2 \\ & \Psi_2\emat$ mod $\varpi^{n_k}$, which splits, i.e., $[c_k^2]=0$.
\item  If $\ov{x}_k =\ov{d}_k$, then $d_k^2$ gives rise to an extension  of $D_p$-modules $\bmat \phi_{\beta} & d_k^2 \\ & \Psi_1\emat$ mod $\varpi^{n_k}$, which splits, i.e., $[d_k^2]=0$. 
\item  If $\ov{x}_k =\ov{f}_k$, then $f_k^1$ gives rise to an extension  of $D_p$-modules $\bmat \Psi_2 & f_k^1 \\ & \phi^{-1}_\beta\epsilon\emat$ mod $\varpi^{n_k}$, which splits, i.e., $[f_k^1]=0$.  
\end{itemize}
\end{lemma}
\begin{proof} 
 To fix attention assume that $x_k=a_k$, 
 i.e., that $\sigma_k = \bmat \Psi_2 & y_k & z_k \\ & \Psi_1 & a_k \\ && \tilde{\rho}\emat$ mod $\varpi^{n_k}$ as in Lemma \ref{LemmaUrban}.  First note that (after possibly changing to an appropriate basis for the $\tilde{\rho}$-piece and using Lemma \ref{indep1})  Siegel-ordinarity implies that \be\label{SO2} \sigma_k|_{D_p} = \bmat \Psi_2 & y_k & z_k^1&z_k^2 \\ & \Psi_1 & a_k^1 & a_k^2 \\ && \phi_{\beta}^{-1}\epsilon & h \\ 
&&& \phi_{\beta} \emat \quad \pmod{\varpi^{n_k}}.\ee

Hence we see that there indeed is a   rank 2 free $\Oo_k/\varpi^{n_k}[D_p]$-subquotient $S=\bmat \Psi_1 & a_k^1 \\ & \phi_{\beta}^{-1}\epsilon\emat$ as claimed in the Lemma. It remains to show that $S$ splits.  Assume it does not. Let $V$ be the representation space for $\sigma_k$. By Siegel-ordinarity it has a $D_p$-stable line $L$ on which $D_p$ acts via $\phi_{\beta}^{-1}\epsilon$. 
Let $\Lambda$ be a $G_{\Sigma}$-stable  lattice giving $\sigma_k$ such that $\sigma_k|_{D_p}$ mod $\varpi^{n_k}$ has the form \eqref{SO2}. Then we see by Lemma \ref{lattice1} that this $\Lambda$ must have a $D_p$-stable rank one submodule with $D_p$ action by $\phi_{\beta}^{-1}\epsilon$, hence finally $\Lambda_k:=\Lambda$ mod $\varpi^{n_k}$ must have a free $\Oo_k/\varpi^{n_k}$-submodule $\Lambda_0$ of rank one on which $D_p$ acts  by $\phi_{\beta}^{-1}\epsilon$.

We now claim that the subquotient $S$ also has a free $\Oo_k/\varpi^{n_k}$-submodule which is stabilized by $D_p$ and on which $D_p$ acts via $\phi_{\beta}^{-1}\epsilon$. 
Indeed, write $\mB=\{e_1, \dots, e_4\}$ for an $\Oo_k/\varpi^{n_k}$-basis of $\Lambda_k$ such that with respect to that basis we have $\sigma_k|_{D_p}$ in form \eqref{SO2}. Write $\Lambda'=(\Oo_k/\varpi^{n_k})e_1 \oplus (\Oo_k/\varpi^{n_k}) e_2 \oplus (\Oo_k/\varpi^{n_k}) e_3$ and $\Lambda'':= (\Oo_k/\varpi^{n_k})e_4$. We note that $\Lambda'$ is stable under the action of $D_p$. We first want to show that $\Lambda_0 \subset \Lambda'$. Let $v_0 \in \Lambda_0$ be an $\Oo_k/\varpi^{n_k}$-module generator. Using the fact that $\mB$ is a basis we can decompose $v_0$ uniquely as $v_0 = v_0' + v_0''$ with $v_0' \in \Lambda'$ and $v_0'' \in \Lambda''$. We want to show that $v_0''=0$. Let $g \in I_p$ be such that $\chi(g) \neq 1$. Then $g \cdot v_0 = \phi_{\beta}^{-1}\epsilon (g) v_0 = \epsilon(g) v_0$. On the other hand $g\cdot v_0 = g \cdot v'_0 + g\cdot v_0''$. We have that $g \cdot v'_0 \in \Lambda'$ and $g \cdot v_0'' = \phi_{\beta}(g) v_0'' + v' = v_0'' + v'$ for some $v'\in \Lambda'$. So we have $$\epsilon(g) v'_0 + \epsilon(g)v_0''= \epsilon(g) v_0 = g \cdot v_0 =  g \cdot v'_0 + v_0'' + v' \implies \epsilon(g)v_0'' - v_0'' \in \Lambda' \cap \Lambda'' = 0.$$ Since $\chi(g) \neq 1$, we see that $\epsilon(g)-1 \in (\Oo_k/\varpi^{n_k})^{\times}$, which implies that $v_0''=0$. So  $\Lambda_0 \subset \Lambda'$.

Now set $\Lambda''=(\Oo_k/\varpi^{n_k})e_1$. This is a $D_p$-stable submodule of $\Lambda'$ on which $D_p$ acts via $\Psi_2$. Notice that we have $S= \Lambda'/\Lambda''$ as $D_p$-modules. Clearly the image of $\Lambda_0\subset \Lambda'$ in $S$ is the desired  $D_p$-stable $\Oo_k/\varpi^{n_k}$-submodule of $S$ on which $D_p$ acts via $\phi_{\beta}^{-1}\epsilon$. We just need to show that this image is non-zero. Suppose to the contrary that it is zero, i.e., that $\Lambda_0 \subset \Lambda''$.  Let $d \in D_p$ be such that $\Psi_1(d) \not \equiv \phi_{\beta}^{-1}\epsilon(d)$ mod $\varpi$. Then we get $\phi_{\beta}^{-1}\epsilon(d) v_0 = d \cdot v_0 = \Psi_1(d) v_0$, which implies $v_0=0$, a contradiction. 
This now proves the claim about $S$. 

 In other words 
there must exist a matrix $A=\bmat a&b \\ c& d \emat \in \GL_2(\Oo_k)$ such that $$\bmat \Psi_1 & a_k^1 \\ & \phi_{\beta}^{-1} \epsilon\emat A = A \bmat \phi_{\beta}^{-1} \epsilon & * \\ & \Psi_1\emat \pmod{\varpi^{n_k}}.$$ Suppose that $[a_k^1]\neq 0$, i.e., that there exists $g \in D_p$ such that $\Psi_1(g)=\phi_{\beta}^{-1}\epsilon(g)=1$ but $a_k^1(g)\neq 0$. Then comparing the upper left entries of both sides evaluated at $g$ we get $a+a_k^1(g) c = a$, from which we get that $c\equiv 0$ mod $\varpi$. For the same entry, but for a general element $g'\in D_p$ such that $\phi_{\beta}^{-1} \epsilon(g') \not\equiv \Psi_1(g')$ (mod $\varpi$), we get $\Psi_1(g') a + c a_k^1(g') = a \phi_{\beta}^{-1}\epsilon(g')$. Reducing this equation mod $\varpi$ we thus conclude that $a \equiv 0$ (mod $\varpi$). This is a contradiction since $A$ is invertible. 

The other cases, i.e., where $\ov{x}_k = \ov{c}_k, \ov{d}_k, \ov{f}_k$ are handled similarly using the fact that $\Psi_1|_{D_p}$, $\Psi_2|_{D_p}$, $\phi_{\beta}^{-1}\epsilon$, $\phi_{\beta}$ are all pairwise distinct mod $\varpi$. This finishes the proof of Lemma \ref{vanishing}.
\end{proof}

We are now ready to complete the proof of Proposition  \ref{InSel}. 
Suppose first that $x_k = d_k$. Then $\sigma_k$ mod $\varpi^{n_k}$ has a subquotient $\tau = \bmat \tilde{\rho} & * \\ & \Psi_1\emat$, i.e., $\sigma_k$ mod $\varpi^{n_k}$ has a submodule $\tau = \bmat \Psi_1 \\ d_k & \tilde{\rho} \emat$ which is non-split mod $\varpi$ as $[\ov{x}_k]\neq 0$.  Thus $d_k$ gives rise to a class 
 in $H^1(G_{\Sigma}, \Hom(\Psi_1, \tilde{\rho}) \otimes \Oo_k/\varpi^n)$ such that $\varpi^{n_k-1}[d_k] \neq 0$.

  Furthermore, by Lemma \ref{vanishing} we must have  $\tau|_{D_p} \cong \bmat \Psi_1 &0&0\\ d_k^1 & \phi_{\beta}^{-1}\epsilon & h \\ 0 & 0 & \phi_{\beta} \emat.$ (Note that while $d_k^2$ as in Lemma \ref{vanishing} gives rise to an  (in fact split) extension of $\Psi_1$ by $\phi_{\beta}$, it is not necessarily true that $d_k^1$ gives rise to an  extension of $\Psi_1$ by $\phi_{\beta}^{-1}\epsilon$ because $h$ may be non-zero. However, it is still true that $d$ gives rise to an extension of $\Psi_1$ by $\tilde{\rho}$.) Recall that $\tilde{\rho} = \rho_g$.

Furthermore, in the basis giving rise to $\tau$ as above, the module $T_{g,k,n_k}$ corresponds to vectors $\bmat 0\\\alpha \\ \beta \emat$ while the  submodule $T^+_{g,k,n_k}$ of $T_{g,k,n_k}$ corresponds to vectors of the form $\bmat 0\\\alpha \\ 0 \emat \in T_{g,k,n_k}$, as on these vectors $I_p$ acts via $\epsilon$. Since in the same basis for every $\gamma \in I_p$ we have $d_k(\gamma) = \bmat d_k^1(\gamma) \\ 0 \emat$, we see that $d_k(\gamma)(\alpha) \in T_{g,k,n_k}^+$ for every $\alpha$ in the representation space of $\Psi_1$. 

 Let $A$ be an $\Oo_k$-algebra (we will only use $A=\Oo_k$ or $A=\Oo_k/\varpi^s \Oo_k$). Given two free $A$-modules  $V_1, V_2$ we have a canonical isomorphism $V_2 \otimes_A V_1^{\vee} \xrightarrow{\psi} \Hom_A(V_1, V_2)$ given by $v \otimes \phi \mapsto (v' \mapsto \phi(v') v)$. If $W_2 \subset V_2$ is an $A$-direct summand (i.e., $V_2 = W_2 \oplus W_2'$ for some $A$-submodule $W_2'$ of $V_2$) then the isomorphism $\psi$ carries $W_2 \otimes V_1^{\vee}\subset V_2  \otimes V_1^{\vee}$ onto the (direct summand of $\Hom_A(V_1, V_2)$ consisting of) homomorphisms whose image is contained in $W_2$. Similarly, if $W_1 \subset V_1$ is an $A$-direct summand and we denote by $W_1' \subset V_1^{\vee}$ the submodule (which is a direct summand) of linear functionals which kill $W_1$, then $\psi$ carries $W'_1$ onto the direct summand of $\Hom(V_1, V_2)$ consisting of homomorphisms that kill $W_1$. All of these follow immediately from the fact that tensor product as well as the $\Hom$-functor commute with direct sums in both coordinates.

Even though by Lemma \ref{Psis}   we have $\Psi_1=1$, in the argument below we keep the notation $\Psi_1$ to convince the reader that analogous calculations hold for any character, hence in particular can be applied in the cases when $\ov{x}_k=\ov{c}_k$ or $\ov{f}_k$, where $\Psi_1$ is replaced by $\Psi_2$ which is a non-trivial character.  By the above we see that for every $\gamma \in I_p$ we get that $d_k(\gamma) \in \Hom(\Psi_1, T_{g,k,n_k})$ has image contained in $T_{g,k,n_k}^+$, so the image of $d_k(\gamma)$ under the inverse of the isomorphism $\psi: T_{g,k,n_k} \otimes \Psi_1^{-1} \to \Hom_{\Oo_k/\varpi^n\Oo_k}(\Psi_1, T_{g,k,n_k})$ is an element of $T_{g,k,n_k}^+\otimes \Psi_1^{-1} \subset T_{g,k,n_k}\otimes \Psi_1^{-1}$. Thus, $d_k$ gives rise to an element of $\Sel_{1, k, n_k}$ as desired.

Suppose now that $x_k=a_k$.  Then $\sigma_k$ mod $\varpi^{n_k}$ has a submodule $\tau = \bmat \Psi_1 & a_k \\ & \tilde{\rho}\emat$ which is non-split mod $\varpi$ as $[\ov{x}_k]\neq 0$.  Thus $a_k$ gives rise to a class in $$H^1(G_{\Sigma}, \Hom(T_{g,k,n_k}, \Oo_k/\varpi^{n_k}\Oo_k(\Psi_1)))$$ such that $\varpi^{n_k-1}[a_k]\neq 0$.  Again by Lemma \ref{vanishing} we must have $\tau|_{D_p} = \bmat \Psi_1 &0&a_k^2\\ 0& \phi_{\beta}^{-1}\epsilon & h \\ 0 & 0 & \phi_{\beta} \emat.$ We will now show  that for every $\gamma \in I_p$ the homomorphism $a_k(\gamma)$ kills $T_{g,k,n_k}^+$. Indeed, note that in the basis which gives the above form of $\tau$ we have $a_k = \bmat 0 & a_k^2\emat$, while $T_{g,k,n_k}^+$ is given again by the vectors of the form $\bmat 0\\ \alpha \\ 0 \emat \in T_{g,k,n_k}$.  

By the discussion above we conclude that  the inverse of the  isomorphism $\psi: \Oo_k/\varpi^{n_k}(\Psi_1) \otimes T_{g,k,n_k}^{\vee} \to \Hom(T_{g,k,n_k}, \Oo_k/\varpi^{n_k}(\Psi_1))$ carries $a_k(\gamma)$ into the subspace $\Oo_k/\varpi^{n_k}(\Psi_1) \otimes (T_{g,k,n_k}^+)'\subset \Oo_k/\varpi^{n_k}(\Psi_1)\otimes T_{g,k,n_k}^{\vee}$, where as above $(T_{g,k,n_k}^+)'$ denotes the submodule of $T_{g,k,n_k}^{\vee}$ consisting of functionals which kill $T_{g,k,n_k}^+$.

Note that since $\Psi_1\Psi_2=\epsilon$, we get  $\Psi_1\otimes \rho_g^{\vee} \cong \Psi_2^{-1}\epsilon\otimes \rho_g^{\vee} \cong  \Psi_2^{-1} \otimes \rho_g^{\vee}(1)$. Under these isomorphisms the  module  $\Oo_k/\varpi^{n_k}(\Psi_1) \otimes (T_{g,k,n_k}^+)'$ gets mapped to $\Oo_k/\varpi^{n_k}(\Psi_2^{-1} \epsilon)\otimes  (T_{g,k,n_k}^+)'$ and finally to $\Oo_k/\varpi^{n_k}(\Psi_2^{-1}) \otimes  (T_{g,k,n_k}^+)'(1)$. Finally (by essential self-duality of $\rho_g$) there is an isomorphism of $G_{\Sigma}$-modules $\psi':\rho_g\to \rho_g^{\vee}(1)$.  
We note that  $T_{g,k,n_k}^+$ is the unique direct summand of $T_{g,k,n_k}$  which is stable under $I_p$ and such that  $I_p$ acts on it by $\epsilon$. Hence $\psi'$ (as it is $G_{\Sigma}$-equivariant) must carry $T_{g,k,n_k}^+$ onto the unique direct summand of $T_{g,k,n_k}^{\vee}(1)$ with the same property, i.e., $\psi'(T_{g,k,n_k}^+) = X \otimes \epsilon$ where $X$ is the unique direct summand of $T_{g,k,n_k}^{\vee}$ on which $I_p$ acts trivially. 

Let $\phi \in (T_{g,k,n_k}^+)'$. Let $\gamma \in I_p$, $v=\bmat v_1 \\ v_2 \emat  \in T_{g,k,n_k}$. (We suppress the $0$ from $\bmat 0\\ v_1 \\ v_2 \emat$.)  Then $$(\gamma \cdot \phi )(v) = \phi (\rho_g(\gamma^{-1})v)= \phi(\bmat \epsilon(\gamma)^{-1} & h(\gamma^{-1})\\ & 1\emat v) =\phi\left(\bmat \epsilon(\gamma)^{-1} v_1 + h(\gamma^{-1}) v_2 \\ v_2 \emat\right)= v_2 = \phi(v).$$ Hence $I_p$ acts trivially on $(T_{g,k,n_k}^+)'$, i.e., we must have $X=(T_{g,k,n_k}^+)'$. In other words $\psi'$ carries $T_{g,k,n_k}^+$ onto $(T_{g,k,n_k}^+)'(1)$. This proves that for $\gamma \in I_p$ we have that $a_k(\gamma)$ is mapped under $\psi^{-1}$ into $\Oo_k/\varpi^{n_k}(\Psi_1)\otimes (T_{g,k,n_k}^+)' \cong \Oo_k/\varpi^{n_k}(\Psi_2^{-1}) \otimes (T_{g,k,n_k}^+)'(1)$ and further mapped under $(\psi')^{-1}$ into the the direct summand $\Oo_k/\varpi^{n_k}(\Psi_2^{-1})\otimes T_{g,k,n_k}^+\subset \Oo_k/\varpi^{n_k}(\Psi_2^{-1})\otimes T_{g,k,n_k}$. Hence we get $[a_k] \in \Sel_{2, k, n_k}$. 

The cases $\ov{x}_k = \ov{c}_k$ and $\ov{x}_k = \ov{f}_k$ are handled in an analogous way. 
Finally the fact that $\varpi^{n_k-1}[x_k]\neq 0$ follows from Lemma \ref{LemmaUrban}.
\end{proof}

\begin{cor} \label{old scalars}  If $\ov{x}_k\in \{\ov{d}_k, \ov{f}_k\}$, then there exists an element $x'_k \in {\rm Sel}_1$ such that $\varpi^{n_k-1}x'_k \neq 0$. If, on the other hand,  $\ov{x}_k\in \{\ov{a}_k, \ov{c}_k\}$, then there exists an element $x'_k \in {\rm Sel}_2$ such that $\varpi^{n_k-1}x'_k \neq 0$. 
\end{cor}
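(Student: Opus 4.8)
The plan is to descend the classes produced in Proposition~\ref{InSel} from coefficients in $\Oo_k$ to coefficients in $\Oo$. By Proposition~\ref{InSel} we already have $[x_k]\in{\rm Sel}_{i,k,n_k}$ with $\varpi^{n_k-1}[x_k]\neq 0$, where $i=1$ if $\ov{x}_k\in\{\ov{d}_k,\ov{f}_k\}$ and $i=2$ if $\ov{x}_k\in\{\ov{a}_k,\ov{c}_k\}$; the only remaining task is to manufacture from $[x_k]$ an element of ${\rm Sel}_i$ (which is defined over $\Oo$) without destroying this non-vanishing.

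First I would use that $\Oo$ is a discrete valuation ring and $\Oo_k$ a finite torsion-free $\Oo$-algebra, so $\Oo_k$ is $\Oo$-free of rank $m:=[E_k:E]$ and hence $\Oo_k/\varpi^{n_k}\Oo_k\cong(\Oo/\varpi^{n_k})^{\oplus m}$ as $\Oo/\varpi^{n_k}$-modules. Tensoring the defining relations $T^?_{g,k,n_k}=T^?_g\otimes_\Oo\Oo_k/\varpi^{n_k}\Oo_k$ with this, I obtain $\Oo[G_\Sigma]$-module (resp.\ $\Oo[I_p]$-module) isomorphisms $T^?_{g,k,n_k}\cong(T^?_{g,2,n_k})^{\oplus m}$ for $?\in\{+,\emptyset\}$, compatible with the quotient $T_{g,k,n_k}/T^+_{g,k,n_k}$ and with twisting by $\Psi_i^{-1}$. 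Since $H^1$ of any group is additive in the coefficients and kernels of maps of $\Oo$-modules split off direct summands, this gives an $\Oo/\varpi^{n_k}$-module isomorphism ${\rm Sel}_{i,k,n_k}\cong({\rm Sel}_{i,2,n_k})^{\oplus m}$ under which multiplication by $\varpi$ is coordinatewise.

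Writing the image of $[x_k]$ under this isomorphism as $(y_1,\dots,y_m)$, from $\varpi^{n_k-1}[x_k]\neq 0$ I conclude $\varpi^{n_k-1}y_j\neq 0$ for some $j$, and I set $x'_k:=y_j\in{\rm Sel}_{i,2,n_k}$. Composing with the $\Oo$-linear injection ${\rm Sel}_{i,2,n_k}\hookrightarrow{\rm Sel}_i[\varpi^{n_k}]\subseteq{\rm Sel}_i$ recorded just before Proposition~\ref{InSel} then gives the desired class in ${\rm Sel}_i$, and injectivity keeps $\varpi^{n_k-1}x'_k\neq 0$. I do not anticipate a real obstacle here: this is essentially a coefficient-descent bookkeeping step, the one genuine ingredient being the freeness of $\Oo_k$ over the DVR $\Oo$; the only point requiring a moment's care is that the comparison map ${\rm Sel}_{i,2,n_k}\hookrightarrow{\rm Sel}_i$ be $\Oo$-linear, so that the order bound transfers verbatim.
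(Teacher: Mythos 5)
Your proposal is correct and is essentially the paper's own argument: both use the $\Oo$-freeness of $\Oo_k$ to identify ${\rm Sel}_{i,k,n_k}$ with a direct sum of copies of ${\rm Sel}_{i,2,n_k}$, pick a coordinate not killed by $\varpi^{n_k-1}$, and push it into ${\rm Sel}_i$ via the inclusion ${\rm Sel}_{i,2,n_k}\hookrightarrow{\rm Sel}_i[\varpi^{n_k}]$. The extra care you take in spelling out the coordinatewise $\varpi$-action and the $\Oo$-linearity of the comparison map is just a more explicit rendering of the same steps.
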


\begin{proof}
First note that as the formation of Selmer groups commutes with direct sums of Galois modules and $\Oo_k/\varpi^r =(\Oo/\varpi^r)^{s}$ where $s= [\Oo_k:\Oo]$ one has  ${\rm Sel}_{i, k, n_k} = \left({\rm Sel}_{i, 2, n_k}\right)^s$.  If $\ov{x}_k\in \{\ov{d}_k, \ov{f}_k\}$ then by Proposition \ref{InSel} we get that $[x_k] \in {\rm Sel}_{1, k, n_k}$ is such that $\varpi^{n_k-1} [x_k] \neq 0$. Thus there must exist an element $x'_k \in {\rm Sel}_{1, 2, n_k}$ which is not annihilated by $\varpi^{n_k-1}$. As we have an inclusion ${\rm Sel}_{1, 2, n_k} \hookrightarrow {\rm Sel}_1[\varpi^{n_k}]$, we can regard $x_k'$ as an element of $ {\rm Sel}_1$ which is not killed by $\varpi^{n_k-1}$. The other case is analogous.
\end{proof}
We are now ready to finish the proof of Theorem \ref{SK case}, i.e.,   that the pseudo-representation $T$ is not of Saito-Kurokawa type. Indeed, we will now arrive at a contradiction.  Since by Lemma \ref{inde} for every $k \in \mS$ there exists $\ov{x}_k \in \{\ov{a}_k, \ov{c}_k, \ov{d}_k, \ov{f}_k\}$ such that $[\ov{x}_k]$ gives rise to a non-split extension of the corresponding Jordan-Holder blocks of $1 \oplus \rho \oplus \chi$, there exists $A\in \{a,c,d,f\}$ and an infinite subsequence $\mT \subset \mS$ such that for all $k \in \mT$ we have that $[\ov{x}_k]=[\ov{A}_k]$ is such a non-split extension. Fix such an $A$. Then Proposition \ref{InSel}  gives us an extension $[A_k] \in {\rm Sel}_{i, k, n_k}$ for $i=1$ or 2 such that $\varpi^{n_k-1} [A_k] \neq 0$. Set $i(A)=1$ if the extension $[A_k]$ lies in $\Sel_{1, k, n_k}$ and $i(A)=2$ if the extension $[A_k]$ lies in $\Sel_{2,k, n_k}$. Then by Corollary \ref{old scalars} we get an element $A'_k\in {\rm Sel}_{i(A)}$ not annihilated by $\varpi^{n_k-1}$. As $n_k$ tends to $\infty$ for $k \in \mT$, we see that $\Sel_{i(A)}$ must be infinite. Thus we obtain a contradiction to Proposition \ref{Sel finite}.

\section{Ruling out Yoshida type} \label{Yoshida type}

In this section we show that $\sigma_2$ is not the direct sum of two irreducible two-dimensional representations under some assumptions.

For a positive integer $N$  we will write $S_2^{(2)}(\Gamma^{\rm para}(N))$ for weight 2 genus 2 Siegel modular forms of paramodular level $N$.

\begin{prop} \label{Y}
Suppose at least one of the following holds:
\begin{enumerate}[(I)]

\item One has $\ell \not\equiv \pm 1$ mod $p$ for all $\ell \mid N$ and  $\sigma_2$ is Borel-ordinary at $p$.
\item One has  $\ell \not\equiv \pm 1$ mod $p$ for all $\ell \mid N$ and $\sigma_2$ is crystalline at $p$.
\item One has $p>3$ and $\sigma_2=\sigma_F$ for some classical Siegel modular form $F \in S_2^{(2)}(\Gamma^{\rm para}(N))$  which has distinct roots for its Hecke polynomial at $p$.
\end{enumerate}
Then $\sigma_2$ is not of Yoshida type.
\end{prop}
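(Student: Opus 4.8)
\emph{Proof strategy.} The plan is to dispose of case (III) by an automorphic argument and of cases (I), (II) by a Selmer/Galois-cohomology argument in the spirit of Section~\ref{SK type}. For (III), suppose $T=\tr\sigma_F$ is of Yoshida type. By Schmidt's theorem \cite{SchmidtCAP} a holomorphic weight-$2$ paramodular eigenform is of general type (G) or a Saito--Kurokawa lift (Yoshida lifts not occurring at paramodular level), and the Galois representation of a Saito--Kurokawa lift is of Saito--Kurokawa (three-piece) type, hence not of Yoshida type; so $F$ must be of type (G). But under the standing hypotheses ($p>3$ and $F$ having distinct Hecke eigenvalues at $p$) the $p$-adic representation of a weight-$2$ type (G) form is irreducible by the low-weight irreducibility results (see \cite{Weiss19}, cf.\ \cite{BCGP}), contradicting the reducibility of $T$. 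So (III) is settled, and the hypotheses on $p$ and the Hecke roots are exactly what the cited irreducibility statement needs.

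Now assume we are in case (I) or (II) and that $T=T_1+T_2$ is of Yoshida type. First I would pin down its shape. Comparing reductions, the Jordan--H\"older factors of $\ov{T}_1+\ov{T}_2$ are $1,\chi,\rho$; since $\tr\rho$ is an irreducible two-dimensional pseudo-representation it is not a sum of two characters, so after relabeling $\ov{T}_1=\tr\rho$ and $\ov{T}_2=1+\chi$. By \cite{Taylor91} Theorem~1 we may write $T_1=\tr\tilde\rho$ and $T_2=\tr\tilde\tau$ with $\tilde\rho,\tilde\tau\colon G_{\Sigma}\to\GL_2(E)$ irreducible, $\ov{\tilde\rho}\cong\rho$, $\ov{\tilde\tau}^{\rm ss}\cong 1\oplus\chi$. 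The essential self-duality $T\circ\tau=T$ (i.e.\ $T^{\vee}\otimes\Phi=T$) permutes the unique decomposition $\{T_1,T_2\}$ into irreducibles, and since $\ov{T}_1$ is irreducible while $\ov{T}_2$ is not, this permutation is trivial; hence each $T_i$ is fixed by $T\mapsto T^{\vee}\otimes\Phi$, so $\det\tilde\rho=\det\tilde\tau=\Phi$ with $\Phi|_{D_p}=\epsilon$ (and $\Phi^2=\det T=\epsilon^2$). Combined with Lemma~\ref{HT}(ii), this forces $\tilde\tau|_{D_p}$ and $\tilde\rho|_{D_p}$ to have Hodge--Tate weights $0$ and $1$.

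The heart of the argument is then to show the essentially self-dual $\tilde\tau$, with $\ov{\tilde\tau}^{\rm ss}=1\oplus\chi$ and weights $0,1$, cannot exist. At $p$: in case (II), $\tilde\tau$ is a subquotient of the crystalline $\sigma_2$, hence crystalline, so (as $p\geq 3$ and the weights lie in $[0,p-2]$) every $G_{\Sigma}$-stable lattice in $\tilde\tau$ has flat (equivalently peu ramifi\'ee) reduction at $p$ by Fontaine--Laffaille; in case (I), Borel-ordinarity together with Lemma~\ref{HT}(i) gives $\tilde\tau|_{D_p}\cong\bmat\psi^{-1}\epsilon&*\\&\psi\emat$ with $\psi$ unramified and $\ov{\psi}=1$, and I would upgrade "ordinary" to "crystalline" by rerunning the refined-family crystalline-period continuation from the proof of Lemma~\ref{HT}(iii) (using $F_2(x_0)\neq 0$ and \cite{BellaicheChenevierbook} Theorem~3.3.3, as in \cite{BergerBetina19}), so again the reductions are flat at $p$. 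At the bad primes I would use $\ell\not\equiv\pm 1\pmod p$ --- so $1,\ell,\ell^{-1}$ are distinct in $\bfF^{\times}$ and $\bfF,\bfF(1),\bfF(-1)$ are pairwise non-isomorphic as $G_{\bfF_\ell}$-modules --- to show $\tilde\tau$ is unramified at each $\ell\mid N$: the family members $\sigma_k$ are unramified or unipotently ramified at $\ell$, a $p$-adic limiting argument transfers this to $\tilde\tau$, and a non-trivial unipotent ramification would force, via the Weil--Deligne monodromy relation, an isomorphism between two of the graded pieces $\bfF,\bfF(1)$ of $\ov{\tilde\tau}|_{D_\ell}$. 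Finally, Ribet's lemma produces a $G_{\Sigma}$-stable lattice in which $\ov{\tilde\tau}$ is a non-split extension $0\to\bfF(1)\to\ov{\tilde\tau}\to\bfF\to 0$, i.e.\ a nonzero class $c\in H^1(G_{\Sigma},\bfF(1))$ which is unramified outside $p$ and flat at $p$; but by Kummer theory $H^1(G_{\{p\}},\bfF(1))\cong(\bfZ[1/p]^{\times}/p)\oplus\Cl(\bfZ[1/p])[p]$ is one-dimensional, generated by the Kummer class of $p$, which is tr\`es ramifi\'ee at $p$ and hence not flat; so the flat, unramified-outside-$p$ subgroup is $0$, whence $c=0$, a contradiction. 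Thus $T$ is not of Yoshida type in cases (I) and (II) either.

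I expect the main obstacle to be the local analysis at the primes $\ell\mid N$ in cases (I) and (II): one has to formulate and justify carefully the limiting argument that transfers the unipotent-ramification constraint from the $\sigma_k$ to the two-dimensional limit constituent $\tilde\tau$ (working on the level of the family and its pseudo-representation) before the monodromy obstruction can be applied --- without it, the global class $c$ could be the (flat, unramified-at-$p$) Kummer class of some $\ell\mid N$ and the argument collapses. A secondary technical point is the upgrade from "Borel-ordinary" to "crystalline/peu ramifi\'ee" at $p$ in case (I), adapting the refined-family crystalline-period machinery of Lemma~\ref{HT}(iii) to $\tilde\tau$ (alternatively one could try to route case (I) through residually reducible ordinary modularity lifting together with Mazur's Eisenstein ideal); this, together with the somewhat ad hoc local hypotheses these steps require, is presumably what is meant by "quite strong assumptions" in the introduction.
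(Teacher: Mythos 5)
Your reduction to a residual extension between the characters $1$ and $\chi$ is the right idea, but in cases (I) and (II) you have chosen the wrong direction of the extension, and this is exactly where your argument breaks. You take $\chi$ as the sub and $1$ as the quotient, producing a class in $H^1(G_{\Sigma},\bfF(1))$, and then you must prove this class is unramified at every $\ell\mid N$ before Kummer theory can kill it; as you yourself note, otherwise the (flat at $p$, unramified at $p$) Kummer classes of the primes $\ell\mid N$ survive. That unramifiedness is not available: assumption (6) only imposes unipotence of $\sigma_k|_{I_\ell}$ when $\ell\equiv 1\pmod p$, so for $\ell\not\equiv 1\pmod p$ there is no control at all on the ramification at $\ell$ of $\sigma_2$ or of its constituent $\rho_2$, and nothing in the hypotheses of the proposition supplies it. The paper's proof takes the opposite direction: by irreducibility of $\rho_2$ (and Lemma \ref{subtoquo}) one can arrange $\ov{\rho}_{2,\Lambda}=\left[\begin{smallmatrix}1&a\\&\chi\end{smallmatrix}\right]$ non-split, so the class lives in $H^1(G_{\Sigma},\bfF(-1))$; the ordinarity (case (I), via Lemma \ref{lattice1} applied to the $\phi_{\alpha}^{-1}\epsilon$-line) or Fontaine--Laffaille theory (case (II)) shows it dies on $I_p$, and the hypothesis $\ell\not\equiv\pm 1\pmod p$ is used precisely to make the local condition at $\ell\mid N$ automatic, because the singular part of the local cohomology with $\bfF(-1)$-coefficients vanishes for such $\ell$ (Lemma 6.3 of \cite{BergerKlosin19accepted}); the resulting everywhere-unramified group is then an eigenspace of the class group of $\bfQ(\mu_p)$ that vanishes by \cite{Washingtonbook} Proposition 6.16. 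With this choice no limiting argument at $\ell$, no monodromy discussion, and (in case (I)) no upgrade from Borel-ordinary to crystalline/peu ramifi\'ee is needed; your proposed rerun of the refined-family crystalline-period machinery for $\tilde\tau$ is both unnecessary and unsupported, since $x_0\notin Z$ and the relevant sheaf-theoretic statements were only set up for the full four-dimensional family.

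Case (III) also has a genuine gap: you invoke irreducibility of the $p$-adic Galois representation of a weight-$2$ paramodular type (G) form, citing \cite{Weiss19}/\cite{BCGP}, but no such result is available in weight $2$ -- indeed Remark \ref{limitation} records that this is work in progress of Weiss, and the unavailability of weight-$2$ irreducibility is the raison d'\^etre of the whole paper, so your argument is close to assuming the conclusion. In the paper the hypothesis that the Hecke polynomial at $p$ has distinct roots is used for something else entirely: via \cite{Jorza10} Theorem 4.3.4 or \cite{Mok14} Proposition 4.16 it gives crystallinity of $\sigma_2$, after which one either reduces to case (II) (if $\ell\not\equiv\pm1\pmod p$ for all $\ell\mid N$) or applies \cite{Ramakrishnan13} Theorem C together with \cite{KudlaRallisSoudry92} Theorem 7.1 to show that a Yoshida-type decomposition would force $F$ to be a Yoshida lift, contradicting Schmidt's classification \cite{SchmidtCAP} of paramodular forms. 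Your identification of the shape of the decomposition ($\ov T_1=\tr\rho$, $\ov T_2=1+\chi$) and the ordinarity of the $\rho$-constituent are fine, but as written neither (I)/(II) nor (III) closes.
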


\begin{proof}
Assume that in fact $\sigma_2 = \rho_1 \oplus \rho_2$ with $\rho_1, \rho_2$ irreducible and $\ov{\rho}_1=\rho$ and $\ov{\rho}_2^{\rm ss} = 1 \oplus \chi$.  
By Lemma \ref{HT}(i) we have $(\sigma_2|_{D_p})^{\rm ss}=\phi_{\beta}^{-1}\epsilon \oplus \phi_{\beta} \oplus \gamma$ , which  as in Lemma \ref{ordrhotilde} implies that $\rho_1$ is ordinary, i.e., that $\rho_1|_{D_p} \cong_E \bmat \phi_{\beta}^{-1} \epsilon & * \\ & \phi_{\beta}\emat$. By Lemma \ref{HT}(ii) the Hodge-Tate-Sen weights of $\sigma_2$ are 0,0,1,1. 

Proof of (I): As $\sigma_2$ is Borel-ordinary,  this forces $\rho_2|_{D_p}$ to be ordinary, i.e., $\rho_2|_{D_p} \cong \bmat  \phi_{\alpha}^{-1}\epsilon &* \\  & \phi_{\alpha}\emat$ for some $\alpha\in \Oo^{\times}$. On the other hand since $\rho_2$ is irreducible there exists a $G_{\Sigma}$-stable lattice $\Lambda$  in the space of $\rho_2$  such that with respect to that lattice we have \be \label{form2} \ov{\rho}_{2, \Lambda} =\bmat 1 & a \\ & \chi\emat \not\cong 1 \oplus \chi.\ee  By Lemma \ref{lattice1}, the lattice $\Lambda$ must have a $D_p$-stable line on which $D_p$ acts via $\phi_{\alpha}^{-1}\epsilon$, so $\ov{\rho}_{2, \Lambda}|_{D_p} \cong \bmat \ov{\phi}_{\alpha}^{-1} \chi & * \\ & \ov{\phi}_{\alpha} \emat$. By comparing with the form \eqref{form2} and using that $\chi$ is ramified we conclude that $\ov{\phi}_{\alpha}=1$, so in fact $\ov{\rho}_{2, \Lambda}|_{D_p} \cong \bmat \chi & * \\ & 1 \emat$. Thus $\ov{\rho}_2|_{D_p} \cong 1 \oplus \chi$. This in particular implies that $\ov{\rho}_2$ splits when restricted to $I_p$. Hence  $a$ gives rise to a class in $$H^1_{\Sigma}(\bfQ, \bfF(-1)):=\ker(H^1(G_{\Sigma}, \bfF(-1)) \overset{{\rm res}_{p}}{\to} H^1(I_p, \bfF(-1))).$$ 
Since $\ell \not\equiv \pm 1$ mod $p$ for all $\ell\mid N$ we use Lemma 6.3 in \cite{BergerKlosin19accepted} to conclude that  $H^1_{\Sigma}(\bfQ, \bfF(-1)) =\ker(H^1(G_{\Sigma}, \bfF(-1)) \to \prod_{\ell \in \Sigma} H^1(I_{\ell}, \bfF(-1)))$. This part of the class group of $\bfQ(\mu_p)$  is zero  by Proposition 6.16 in \cite{Washingtonbook}. This implies that $\ov{\rho}_{2, \Lambda}$ is split which leads to a contradiction. 

Proof of (II): As before there exists a $G_{\Sigma}$-stable lattice $\Lambda$ such that with respect to that lattice we have $\ov{\rho}_{2, \Lambda} =\bmat 1 & a \\ & \chi\emat \not\cong 1 \oplus \chi$. Since $\sigma_2$ is crystalline and its Hodge-Tate-Sen weights are 0,0,1,1, it is in the Fontaine-Laffaille range. Hence so is $\rho_2$. This implies (see e.g. \cite{BergerKlosin19accepted} Lemma 6.1) that the extension given by $a$ gives rise to a non-zero element in $H^1_{\Sigma}(\bfQ, \bfF(-1))$, 
which again gives a contradiction as $H^1_{\Sigma}(\bfQ, \bfF(-1))=0$.

Proof of (III): We have $\sigma_2=\sigma_F$ for some classical Siegel modular form $F \in S_2^{(2)}(\Gamma^{\rm para}(N))$. We can assume that $F$ is not a Saito-Kurokawa lift (as then $\tr \sigma_F$ would not be of Yoshida type). By \cite{SchmidtCAP} this means that $F$ is of type $(G)$. The assumption on the roots of the Hecke polynomial implies by \cite{Jorza10} Theorem 4.3.4 or \cite{Mok14} Proposition 4.16 that $\sigma_2$ is crystalline at $p$.   If $\ell \not\equiv \pm 1$ mod $p$ for all $\ell \mid N$  then we get a contradiction as in (I) and (II). Without this assumption  we argue as in the proof of \cite{BergerKlosinAJMaccepted} Theorem 8.6, i.e. apply \cite{Ramakrishnan13} Theorem C and \cite{KudlaRallisSoudry92} Theorem 7.1 to deduce that $F$ would have to be of Yoshida type, i.e. not of type (G),  a contradiction.

\begin{rem}\label{limitation}

 Note that the key issue in the Yoshida case  is ruling out that $\sigma_2$ is the sum of an (ordinary) 2-dimensional Galois representation associated to a classical form (with associated $\mod{p}$-representation $\rho$) and a 2-dimensional Galois representation that is a priori not de Rham.

 It is worth noting that whilst we are able to rule out that $\sigma_2$ is of Saito-Kurokawa type only using properties of the representations $\sigma_k$ for $k \in \mathcal{S}$ the Yoshida type case requires additional information about $\sigma_2$. 
In particular, while for both the Saito-Kurokawa  and the Yoshida type we assume crystallinity of the representations $\sigma_k$, in case (II) of Proposition \ref{Y} we also need to assume that $\sigma_2$ itself is crystalline.  On the other hand, work in progress by Ariel Weiss shows that a classical Siegel-ordinary type (G) eigenform has irreducible Galois representation. This would allow us to drop the assumption in (III) on the distinctness of the roots of the Hecke polynomial.
\end{rem}

\end{proof}

\bibliographystyle{amsalpha}
\bibliography{standard2}

\end{document}